\newcommand{\reff}[1]{(\ref{#1})}
\newcommand{\IH}{\mathbb{H}}
\newcommand{\ID}{\mathbb{D}}
\newcommand{\IE}{\mathbb{E}}
\newcommand{\IK}{\mathbb{K}}
\newcommand{\sG}{\mathcal{G}}
\newcommand{\sA}{\mathcal{A}}
\newcommand{\sX}{\mathcal{X}}
\newcommand{\dto}{\overset{d}{\to}}
\newcommand{\R}{\mathbb{R}}
\newcommand{\Z}{\mathbb{Z}}
\newcommand{\N}{\mathbb{N}}
\newcommand{\E}{\mathbb{E}}
\newcommand{\IP}{\mathbb{P}}
\newcommand{\sF}{\mathcal{F}}
\newcommand{\sH}{\mathcal{H}}
\newcommand{\G}{\mathbb{G}}
\newcommand{\Ii}{\mathbbm{1}}
\newcommand{\Cov}{\mathrm{Cov}}
\newcommand{\Var}{\mathrm{Var}}
\newcommand{\eps}{\varepsilon}
\newcommand{\norm}[1]{\left\lVert#1\right\rVert}
\newcommand{\undersim}[1]{\mathrel{\mathpalette\@undersim{#1}}}
\newcommand{\@undersim}[2]{%
  \vcenter{%
    \ialign{%
      ##\cr
      $\m@th#1#2$\cr
      \noalign{\nointerlineskip\kern.2ex}
      $\m@th#1\sim$\cr
      \noalign{\kern-.4ex}
    }%
  }%
}
\newtheorem{thm}{Theorem}[section]
\newtheorem{lem}[thm]{Lemma}
\newtheorem{cor}[thm]{Corollary}
\newtheorem{assum}[thm]{Assumption}
\theoremstyle{definition}
\newtheorem{defi}[thm]{Definition}
\newtheorem{rem}[thm]{Remark}
\numberwithin{equation}{section}
\begin{document}

\title{Empirical process theory for nonsmooth functions under functional dependence}

\thispagestyle{empty}

\begin{center}
{\LARGE \bf Empirical process theory for nonsmooth functions under functional dependence}\\
{\large Nathawut Phandoidaen, Stefan Richter}\\

{phandoidaen@math.uni-heidelberg.de, stefan.richter@iwr.uni-heidelberg.de}\\

{\small Institut für angewandte Mathematik, Im Neuenheimer Feld 205, Universität Heidelberg}\\
\today
\end{center}

\begin{abstract}
    {We provide an empirical process theory for locally stationary processes over nonsmooth function classes. An important novelty over other approaches is the use of the flexible functional dependence measure to quantify dependence. A functional central limit theorem and  nonasymptotic maximal inequalities are provided. The theory is used to prove the functional convergence of the empirical distribution function (EDF) and to derive uniform convergence rates for kernel density estimators both for stationary and locally stationary processes. A comparison with earlier results based on other measures of dependence is carried out.}
\end{abstract}

\section{Introduction}
\label{sec_intro}

Empirical process theory is one of the key concepts in proving uniform convergence rates and weak convergence of composite functionals. It is preferable to have a theory which can be applied to observations which are dependent but also nonstationary. Locally stationary processes allow for a smooth change of the distribution over time but can locally be approximated by stationary processes and thus provide more flexible time series models (cf. \cite{dahlhaus2019}). This paper extends a theory of our recent paper \cite{empproc} where we have established an empirical process theory for locally stationary processes under functional dependence considering function classes that are at least H\"older-continuous. Here, we additionally allow for nonsmooth functions, in particular, our framework includes (but is by far not limited to) the empirical distribution function (EDF).

The only papers that are know to the authors that explicitly deal with functional convergence of locally stationary processes are \cite{empproc} and \cite{mayer2019}. For stationary processes, a vast range of theoretical results are available. A prominent idea to measure dependence of random variables is given by mixing (cf. \cite{doukhan94}). The publications \cite{arcones}, \cite{yu_bin} and \cite{doukhan1995} derive large deviation results and uniform central limit theorems under absolute regularity ($\beta$-mixing). In \cite{rio2013}, refined results are available. Other general theories are based on Markov chains and other types of mixing, cf. the overview in \cite{empproc}.

Regarding the functional weak convergence of the EDF, more specific conditions were derived in the literature for stationary observations. \cite[Theorem 4]{durieu14} provide functional convergence of the EDF using bounds for covariances of H\"older functions of the random variables. Another abstract concept was introduced by \cite{berkes09} via S-mixing (for stationary mixing), which imposes the existence of $m$-dependent approximations of the original observations. They then derive strong approximations and uniform central limit theorems for the EDF. Other approaches were presented in \cite{dehling2001} and \cite{dedecker10}. In \cite{wu2008} and \cite{mayer2019} uniform central limit theorems for the EDF were derived for stationary and piece-wise locally stationary processes under functional dependence.

Our empirical process theory is derived under the dependence concept of functional dependence (cf. \cite{wu2005anotherlook}). In combination with the theory of martingales it allows for sharp large deviation inequalities (cf. \cite{Wu13} or \cite{wuzhang2017}). We assume that $X_i =  (X_{ij})_{j=1,...,d}$, $i = 1,...,n$, is a $d$-dimensional Bernoulli shift process of the form
\begin{equation}
    X_i = J_{i,n}(\sA_i),\label{representation_x}
\end{equation}
where $\sA_i = \sigma(\varepsilon_i,\varepsilon_{i-1},...)$ is the sigma-algebra generated by $\varepsilon_i$, $i \in\Z$, a sequence of i.i.d. random variables in $\R^{\tilde d}$ ($d,\tilde d \in \N$), and some measurable function $J_{i,n}:(\R^{\tilde d})^{\N_0}\to \R$, $i=1,...,n$, $n\in\N$. For a real-valued random variable $W$ and some $\nu > 0$, we define $\|W\|_{\nu} := \IE[|W|^{\nu}]^{1/\nu}$. 
If $\varepsilon_k^{*}$ is an independent copy of $\varepsilon_k$, independent of $\varepsilon_i, i\in\Z$, we define $\sA_i^{*(i-k)} := (\varepsilon_i,...,\varepsilon_{i-k+1},\varepsilon_{i-k}^{*},\varepsilon_{i-k-1},...)$ and set $X_i^{*(i-k)} := J_{i,n}(\sA_{i}^{*(i-k)})$. The uniform functional dependence measure is then given by
\begin{equation}
    \delta_{\nu}^{X}(k) = \sup_{i=1,...,n}\sup_{j=1,...,d}\big\|X_{ij} - X_{ij}^{*(i-k)}\big\|_\nu.\label{definition_uniform_functional_dependence_measure}
\end{equation}
The value $\delta_{\nu}^{X}$ measures the impact of $\varepsilon_0$ on $X_k$. The representation \reff{representation_x} actually does cover a large variety of processes. In \cite{borkar1993} it was motivated that the set of all processes of the form $X_i = J(\varepsilon_i,\varepsilon_{i-1},...)$ should be equal to the set of all stationary and ergodic processes. We additionally allow $J$ to vary with $i$ and $n$ to cover processes which change their stochastic behavior over time. This is exactly the form of the so-called locally stationary processes discussed in \cite{dahlhaus2019}.

Since we are working in a time series context, many applications ask for functions $f$ that not only depend on the actual observation of the process but on the whole (infinite) past $Z_i := (X_i,X_{i-1},X_{i-2},...)$. In the course of this paper, we aim to derive asymptotic properties of the empirical process
\begin{equation}
    \G_n(f) := \frac{1}{\sqrt{n}}\sum_{i=1}^{n}\big\{ f(Z_i,\frac{i}{n}) - \E f(Z_i,\frac{i}{n})\big\}, \quad f \in \sF,\label{definition_empirical_process}
\end{equation}
where
\[
    \sF \subset \{f:(\R^d)^{\N_0}\times [0,1] \to \R \text{ measurable}\}.
\]
Let $\IH(\varepsilon,\sF,\|\cdot\|)$ denote the bracketing entropy, that is, the logarithm of the number of $\varepsilon$-brackets with respect to some distance $\|\cdot\|$ that is necessary to cover $\sF$ (this is made precise at the end of this section). We will define a distance $V_n$ which guarantees weak convergence of \reff{definition_empirical_process} if the corresponding bracketing entropy integral $\int_0^{1}\sqrt{\IH(\varepsilon,\sF,V_n)}d\varepsilon$ is finite.


Our main contributions are the following:
\begin{itemize}
    \item We derive maximal inequalities for $\G_n(f)$ where the class $\sF$ consists of nonsmooth functions.
    \item We state conditions to ensure asymptotic tightness and functional convergence of $\G_n(f)$, $f\in\sF$.
\end{itemize}
Eventhough our theory allows for general function classes, we will have a special focus on the EDF. In particular, we derive functional convergence of the EDF under weak conditions on the moments and the dependence structure of the process $X_i$. We will see that our results typically pose weaker conditions on the underlying dependence structure than comparable results for the stationary case mentioned above. In particular, we compare our results with \cite{mayer2019} where the authors discussed the EDF of piece-wise locally stationary processes.

The paper is structured as follows. In Section \ref{sec_model}, we present our main result Theorem \ref{cor_functional_central_limit_theorem_martingale}, the functional central limit theorem under minimal moment conditions. We then derive a version for stationary processes, and discuss its application on empirical distribution functions where the underlying process is either stationary or locally stationary. It is the aim of Section \ref{sec_further_appli} to show a wide range of applicability of our theory. Some assumptions are postponed to Section \ref{sec_clt}, where a new multivariate central limit theorem for locally stationary processes is presented. In Section \ref{sec_max_ineq_tight} we provide new maximal inequalities for $\G_n(f)$ in case of a finite and infinite function class $\sF$. In Section \ref{sec_conclusion} a conclusion is drawn. We postpone all detailed proofs to the Supplementary Material \ref{suppA}.

We now introduce some basic notation. For $a,b\in\R$, let $a\wedge b := \min\{a,b\}$, $a\vee b := \max\{a,b\}$. For $k\in\N$,
\begin{equation}
    H(k) := 1 \vee \log(k) \label{H_var}
\end{equation}
which naturally appears in large deviation inequalities. For a given finite class $\sF$, let $|\sF|$ denote its cardinality. We use the abbreviation
\begin{equation}
    H = H(|\sF|) = 1 \vee \log |\sF| \label{H_fix}
\end{equation}
if no confusion arises. For some distance $\|\cdot\|$, let $\N(\varepsilon,\sF,\|\cdot\|)$ denote the bracketing numbers, that is, the smallest number of $\varepsilon$-brackets $[l_j,u_j] := \{f\in \sF: l_j \le f \le u_j\}$ (i.e. measurable functions $l_j,u_j:(\R^{d})^{\N_0}\times[0,1]\to\R$ with $\|u_j - l_j\| \le \varepsilon$ for all $j$) to cover $\sF$. Let $\IH(\varepsilon,\sF,\|\cdot\|) := \log \N(\varepsilon,\sF,\|\cdot\|)$ denote the bracketing entropy. For $\nu \ge 1$, let
\[
    \|f\|_{\nu,n} := \Big(\frac{1}{n}\sum_{i=1}^{n}\big\|f\big(Z_{i},\frac{i}{n}\big)\big\|_\nu^\nu\Big)^{1/\nu}.
\]

\section{A functional central limit theorem under functional dependence and application to empirical distribution functions}
\label{sec_model}



A process $X_i$, $i=1,...,n$ is called locally stationary if for each $u\in[0,1]$, there exists a stationary process $\tilde X_i(u)$ approximating $X_i$ for $i=1,...,n$, i.e. $X_i \approx \tilde X_i(u)$ if $|u - \frac{i}{n}|$ is small (cf. \cite{dahlhaus2019}). The exact form needed is stated in Assumption \ref{ass_clt_process}. Thus, $X_i$ behaves stationary around each fixed (rescaled) time point $u\in[0,1]$, but over the whole time period $i=1,...,n$ its distribution can change drastically. Deterministic properties of the process like expectation, covariance, spectral density or empirical distribution functions therefore also depend on the rescaled time $u\in[0,1]$. As an example, consider the localized empirical distribution function of $X_i$,
\begin{equation}
    \hat G_{n,h}(x,v) := \frac{1}{nh}\sum_{i=1}^{n}K\big(\frac{i/n-v}{h}\big) \Ii_{\{X_i \le x\}},\label{example_distribution_emp_eq1}
\end{equation}
where $K:\R \to \R$ is a kernel function and $h = h_n > 0$ a bandwidth. The goal of this paper is to provide a general empirical process theory which allows to show, for instance, a functional central limit theorem of $\hat G_{n,h}(x,v)$ for fixed $v\in[0,1]$ of the form
\begin{equation}
	\big[\sqrt{nh}\big(\hat G_{n,h}(x,v) - G(x,v)\big)\big]_{x\in \R} \dto \G(x)_{x\in\R}\label{example_distribution_emp_eq2}
\end{equation}
where $(\G(x))_{x\in\R}$ is a centered Gaussian process and $G(x,v) = \IP(\tilde X_0(v) \le x)$ denotes the distribution function of $\tilde X_0(u)$.

Clearly, the additional localization via kernels changes the convergence rate of the empirical process. To discuss  \reff{example_distribution_emp_eq1} with the general form \reff{definition_empirical_process}, we therefore suppose that any $f\in \sF$ has a representation
\begin{equation}
    f(z,u) = D_{f,n}(u)\cdot \bar f(z,u), \quad\quad z\in(\R^d)^{\N_0}, u\in[0,1],\label{f_form_decomposition}
\end{equation}
where $\bar f$ is independent of $n$ and $D_{f,n}(u)$ is independent of $z$. For the specific example given in \reff{example_distribution_emp_eq2}, we would consider
\[
	\sF = \Big\{(z,u) \mapsto f_{x}(z,u) := \frac{1}{\sqrt{h}}K(\frac{u-v}{h})\cdot \Ii_{\{z_0 \le x\}}: x\in\R\Big\},
\]
and thus $D_{f_x,n}(u) = \frac{1}{\sqrt{h}}K(\frac{u-v}{h})$ and $\bar f_x(z,u) = \Ii_{\{z_0 \le x\}}$.

We now introduce the necessary assumptions for our empirical process theory based on the functional dependence measure. Based on the decomposition \reff{f_form_decomposition}, we define the following two function classes based on $\bar f$, which mimic the one-step-ahead mean and variance forecast,
\begin{eqnarray*}
	\bar \sF^{(1)} &:=& \{(z,u) \mapsto \IE[ \bar f(Z_i,u)|Z_{i-1}=z]: f\in \sF\},\\
	\bar \sF^{(2)} &:=& \{(z,u) \mapsto \IE[ \bar f(Z_i,u)^2|Z_{i-1}=z]^{1/2}: f\in \sF\}.
\end{eqnarray*}
For $s \in (0,1]$, a sequence $z = (z_j)_{j\in\N_0}$ of elements of $\R^d$ (equipped with the maximum norm $|\cdot|_{\infty}$) and an absolutely summable sequence $L = (L_j)_{j\in\N_0}$ of nonnegative real numbers, we set
\[
    |z|_{L,s} := \Big(\sum_{j=0}^{\infty}L_j |z_j|_{\infty}^s\Big)^{1/s}, \quad\quad |z|_{L} := |z|_{L,1}.
\]

\begin{defi} A class $\sG$ is called a $(L,s,R,C)$-class if $L = (L_{j})_{j\in\N_0}$ is a sequence of nonnegative real numbers, $s \in (0,1]$ and $R:(\R^d)^{\N_0} \times [0,1] \to [0,\infty)$ satisfies for all $u\in[0,1]$, $z,z' \in (\R^d)^{\N_0}$, $g \in \sG$,
    \[
         |g(z,u) - g(z',u)| \le |z-z'|_{L,s}^s\cdot \big[R(z,u) + R(z',u)\big].
    \]
    Furthermore, the tuple $C = (C_R,C_{\sG}) \in (0,\infty)^2$ satisfies $\sup_u|g(0,u)| \le C_{\sG}$, $\sup_u|R(0,u)| \le C_R$.
\end{defi}

There are two basic assumptions on $\bar f$ connected to our main result. The first is a compatibility condition which connects smoothness properties of $\bar\sF^{(\kappa)}$, $\kappa \in \{1,2\}$ with corresponding moment assumptions on the process $X_i$, $i=1,...,n$.

\begin{assum}[Compatibility condition on $\sF$]\label{ass2} The classes $\bar \sF^{(\kappa)}$, $\kappa \in \{1,2\}$, are $(L,s,R,C)$-classes, and there exists $p \in (1,\infty]$,  $C_X > 0$ such that
    \[
        \sup_{i,u}\|R(Z_{i-1},u)\|_{2 p} \le C_R, \quad\quad \sup_{i,j}\|X_{ij}\|_{\frac{2 sp}{p-1}} \le C_X.
    \]
    Let $\ID_n \ge 0$ and $\Delta(k) \ge 0$ such that
    \[
        2d C_R\sum_{j=0}^{k-1}L_{j}(\delta_{\frac{2 sp}{p-1}}^{X}(k-j-1))^s \le \Delta(k), \quad\quad \sup_{f\in \sF} \Big(\frac{1}{n}\sum_{i=1}^{n}\big|D_{f,n}(\frac{i}{n})\big|^{2}\Big)^{1/2} \le \ID_{n}.
\]
\end{assum}



Based on Assumption \ref{ass2}, we define for $f\in\sF$,
\begin{equation}
    V_n(f) := \|f\|_{2,n} + \sum_{k=1}^{\infty}\min\{\|f\|_{2,n},\ID_n \Delta(k)\}.\label{definition_v}
\end{equation}
Clearly, $V_n(f-g)$ is a distance between $f,g\in \sF$. Furthermore, let
\begin{equation}
	\beta(q) := \sum_{j=q}^{\infty}\Delta(j).\label{definition_betaq}
\end{equation}
Postponing some technicalities, we state our main result. In the space
\begin{equation}
    \ell^{\infty}(\sF) = \{\G:\sF \to \R \,|\, \|\G\|_{\infty} := \sup_{f\in\sF}|\G(f)| < \infty\},\label{definition_linf}
\end{equation} the following theorem holds true.

Note that it is a result of the convergence of the finite-dimensional distributions in Section \ref{sec_clt}, Theorem \ref{theorem_clt_mult_martingale_paper}, and asymptotic tightness in Section \ref{sec_asymptotic_tightness}, Corollary \ref{cor_martingale_equicont}.

\begin{thm}\label{cor_functional_central_limit_theorem_martingale}
    Suppose that $\sF$ satisfies Assumptions \ref{ass2}, \ref{ass_clt_expansion_ass2}, \ref{ass3}, \ref{ass_clt_process}, \ref{ass_clt_fcont}, \ref{ass_clt_dexplicit}. For
     \begin{equation}
        \psi(\varepsilon) = \sqrt{\log(\varepsilon^{-1} \vee 1)}\log\log(\varepsilon^{-1} \vee e)\label{definition_psi_factor}
    \end{equation}
    suppose that
    \[
        \sup_{n\in\N}\int_0^{1}\psi(\varepsilon)\sqrt{\IH(\varepsilon,\sF,V_n)} d \varepsilon < \infty.
    \]
    Then in $\ell^{\infty}(\sF)$,
    \[
        \big[\G_n(f)\big]_{f\in\sF} \dto \big[\G(f)\big]_{f\in\sF}
    \]
    where $(\G(f))_{f\in\sF}$ is a centered Gaussian process with covariances
    \[
        \Cov(\G(f),\G(g)) = \lim_{n \to \infty} \Cov(\G_n(f),\G_n(g)) = \Sigma^{(\IK)}
    \]
    and $\Sigma^{(\IK)}$ is from Assumption \ref{ass_clt_dexplicit}.
\end{thm}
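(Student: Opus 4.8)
The plan is the classical two-step route to weak convergence in $\ell^{\infty}(\sF)$: (i) convergence of the finite-dimensional distributions of $[\G_n(f)]_{f\in\sF}$, and (ii) asymptotic tightness, the latter reduced to asymptotic equicontinuity of the process with respect to the pseudometrics $V_n$ of \reff{definition_v}. Total boundedness of $\sF$ under each $V_n$, and the existence of a limiting semimetric, are automatic once $\IH(\varepsilon,\sF,V_n)<\infty$ for all $\varepsilon>0$, which the entropy-integral hypothesis provides; the two steps then combine through the standard portmanteau characterization of weak convergence in $\ell^{\infty}(\sF)$ to give the claim, with the Gaussian limit pinned down by its covariance $\Sigma^{(\IK)}$.

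For (i), fix $f_1,\dots,f_m\in\sF$ and $\lambda\in\R^m$. Because $\sum_{j=1}^m\lambda_j\G_n(f_j)=\G_n\big(\sum_{j=1}^m\lambda_jf_j\big)$, the Cram\'er--Wold device reduces the question to a multivariate central limit theorem for $\G_n$ evaluated at a fixed finite family of functions. This is exactly Theorem \ref{theorem_clt_mult_martingale_paper} of Section \ref{sec_clt}: under Assumptions \ref{ass2}, \ref{ass_clt_expansion_ass2}, \ref{ass3}, \ref{ass_clt_process}, \ref{ass_clt_fcont}, \ref{ass_clt_dexplicit}, the local-stationarity structure (the stationary approximants $\tilde X_i(u)$), a martingale approximation, and a Lindeberg-type argument yield joint asymptotic normality with limiting covariance $\Sigma^{(\IK)}$ from Assumption \ref{ass_clt_dexplicit}, and one checks en route that $\Cov(\G_n(f),\G_n(g))\to\Sigma^{(\IK)}$.

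For (ii), the target is that for every $\eta>0$,
\[
\lim_{\delta\downarrow 0}\ \limsup_{n\to\infty}\ \P\Big(\sup_{f,g\in\sF,\ V_n(f-g)<\delta}\big|\G_n(f)-\G_n(g)\big|>\eta\Big)=0.
\]
This is obtained by a bracketing-chaining argument built on the nonasymptotic maximal inequality for finite classes proved in Section \ref{sec_max_ineq_tight}. At resolution level $\ell$ one covers $\sF$ by $\N(2^{-\ell},\sF,V_n)$ brackets; at each level one controls both the increments between successive bracket-approximants and the fluctuation of $\G_n$ across a single bracket, the latter by applying $\G_n$ to the nonnegative bracket width, whose $V_n$-norm is of order $2^{-\ell}$. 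Since dependence enters only through the functional dependence measure and the bounds rest on martingale Bernstein/Freedman-type inequalities, each application of the maximal inequality contributes additional $\log$ and $\log\log$ factors; this is precisely why the entropy integral must be weighted by $\psi(\varepsilon)=\sqrt{\log(\varepsilon^{-1}\vee 1)}\,\log\log(\varepsilon^{-1}\vee e)$ in \reff{definition_psi_factor}. Summing the per-level contributions against $\psi$, the finiteness of $\sup_n\int_0^1\psi(\varepsilon)\sqrt{\IH(\varepsilon,\sF,V_n)}\,d\varepsilon$ makes the chaining series converge uniformly in $n$, which gives the displayed equicontinuity; this is the statement packaged by Corollary \ref{cor_martingale_equicont}. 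It is essential here to chain with respect to $V_n$ rather than $\|\cdot\|_{2,n}$: the terms $\min\{\|f\|_{2,n},\ID_n\Delta(k)\}$ in \reff{definition_v}, together with the bound $\ID_n$ on the localization weights $D_{f,n}$ and the tail $\beta(q)$ of $\sum_k\Delta(k)$, already \emph{pre-charge} for the serial dependence, so that once $V_n$-balls are under control the remaining martingale part is genuinely amenable to exponential bounds.

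The main obstacle is step (ii). One must keep the martingale maximal inequality sharp enough --- the correct power of $H=1\vee\log|\sF|$, matched against the $\psi$-weight --- so that the chaining series still converges after the logarithmic losses, and one must handle the bracket cells carefully, since they are not themselves members of $\sF$: the bound on $|\G_n(f)-\G_n(g)|$ for $f,g$ in a common bracket is passed to $\G_n$ of the bracket width, a nonnegative function whose $V_n$-norm is small, and this auxiliary class must itself satisfy the moment and dependence bookkeeping of Assumption \ref{ass2}. Everything else --- the Cram\'er--Wold reduction, the covariance identification, and the total-boundedness verification --- is routine once the multivariate CLT of Section \ref{sec_clt} and the maximal inequalities of Section \ref{sec_max_ineq_tight} are in hand.
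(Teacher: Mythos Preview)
Your proposal is correct and follows the same route as the paper: convergence of finite-dimensional distributions via Theorem \ref{theorem_clt_mult_martingale_paper} combined with asymptotic tightness (equicontinuity) via Corollary \ref{cor_martingale_equicont}. The paper states exactly this as the proof, and your added commentary on the internal mechanics of those two results (the martingale/Lindeberg structure for the fidis and the $V_n$-chaining with the $\psi$-weighted entropy integral for tightness) is accurate and consistent with how those results are actually proved in Sections \ref{sec_clt} and \ref{sec_max_ineq_tight}.
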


Note that properties of the space $\ell^{\infty}(\sF)$ can be found in \cite{Vaart98}, for instance.

Suppose that $\ID_n \in (0,\infty)$ is independent of $n\in\N$. Based on decay rates of $\Delta(k)$, we derive simpler forms of $V_n$ which are shown below in \ref{table_v_values}. These results are proven in \cite[Lemma 7.11 and Lemma 7.12]{empproc}.

\renewcommand{\arraystretch}{2}

\begin{table}[h!]
    \centering
        \begin{tabular}{l|l|l}
         & \multicolumn{2}{c}{$\Delta(j)$ }  \\
         &  $c j^{-\alpha}$, $\alpha > 1, c > 0$         & $c \rho^j$, $\rho \in (0,1)$, $c > 0$    \\
         \hline
         \hline
        $V_n(f)$ & $\|f\|_{2,n}\max\{\|f\|_{2,n}^{-\frac{1}{\alpha}},1\}$ & $\|f\|_{2,n}\max\{\log(\norm{f}_{2,n}^{-1}),1\}$ \\ \hline
        $\int_0^\sigma \sqrt{\IH(\eps,\sF,V_n)}d\eps$ & $\int_0^{\tilde \sigma} \eps^{-\frac{1}{\alpha}}\psi(\eps) \sqrt{\IH(\eps,\sF, \|\cdot\|_{2,n})}d\eps$ & $\int_0^{\tilde\sigma} \log(\eps^{-1})\psi(\eps) \sqrt{\IH(\eps,\sF, \|\cdot\|_{2,n})}d\eps$
        \end{tabular}
        
        \caption{Equivalent expressions of $V_n$ and the corresponding entropy integral under the condition that $\ID_n \in (0,\infty)$ is independent of $n$. We omitted the lower and upper bound constants which are only depending on $c, \rho, \alpha$ and $\ID_n$. Furthermore, $\tilde \sigma = \tilde \sigma (\sigma)$ fulfills $\tilde \sigma \to 0$ for $\sigma \to 0$. }
        \label{table_v_values}
    \end{table}

The theorem significantly simplifies if $X_i$ is stationary, $\bar f(z,u) = \bar f(z_0)$, depends only on one observation and no weighting is present, i.e. $D_{f,n}(u) = 1$. Assumptions \ref{ass_clt_expansion_ass2}, \ref{ass_clt_fcont}, \ref{ass_clt_process} and \ref{ass_clt_dexplicit} are then directly fulfilled. These assumptions are needed only to provide a (pointwise) central limit theorem for locally stationary processes. They basically ask for several smoothness properties of $\bar f$. 

In More detail, let
\[
    \tilde\G_n(h) := \frac{1}{\sqrt{n}}\sum_{i=1}^{n}\big\{h(X_i) - \IE h(X_i)\big\},
\]
where $X_i = J(\sA_i)$, $i=1,...,n$, is a stationary Bernoulli shift process and $\sH \subset \{h:\R^d \to \R \text{ measurable}\}$ with envelope function $\bar h$, i.e. for $h \in \sH$ we have $|h(\cdot)| \le \bar h (\cdot)$, such that
\begin{eqnarray*}
	h^{(1)}(z_0) = \IE[h(X_1)|X_0=z_0], \quad\quad h^{(2)}(z_0) = \IE[h(X_1)^2|X_0=z_0]^{1/2}
\end{eqnarray*}
are H\"older continuous with exponent $s$ and constant $L_{\sH}$, that is, for all $z,z' \in \R$,
\[
	|h^{(1)}(z) - h^{(1)}(z')| \le L_{\sH}|z-z'|^s, \quad\quad |h^{(2)}(z) - h^{(2)}(z')| \le L_{\sH}|z-z'|^s.
\]

Assumption \ref{ass2} automatically holds with $R(\cdot) = \frac{1}{2}$ and thus $C_R = \frac{1}{2}$, $L =  L_{\sH}$ as well as $C_\sG = \max\{h^{(1)}(0), h^{(2)}(0)\}$. Then we have the following corollary of Theorem \ref{cor_functional_central_limit_theorem_martingale}.

\begin{cor}\label{corollary_empirical}
	Suppose that $\|X_1\|_{2s} < \infty$ and put $\ID_n := 1$. Let $\Delta(k)$ fulfill $\Delta(k) \ge d L_{\sH} \delta_{2s}^{X}(k-1)^s$ and there exists $C_{\beta} > 0$ such that for all $q_1,q_2\in\N$,
	\begin{equation}
		\beta(q_1 q_2) \le C_{\beta}\beta(q_1)\beta(q_2).\label{corollary_empirical_eq1}
	\end{equation}
	Furthermore, $\|\bar h(X_1)\|_{2\bar p} < \infty$ for some $\bar p > 1$.
	Assume that
	\[
		\sup_{n\in\N}\int_0^{1} \psi(\varepsilon)\sqrt{\IH(\varepsilon,\sH,V_n)} d \varepsilon < \infty,
	\]
	where $\psi(\varepsilon)$ is from \reff{definition_psi_factor}. Then it holds in $\ell^{\infty}(\sH)$ that
	\[
		\big[\tilde\G_n(h)\big]_{h\in \sH} \dto \big[\tilde \G(h)\big]_{h\in \sH},
	\]
	where $(\tilde \G(h))_{h\in \sH}$ is a centered Gaussian process with covariances
    \[
        \Cov(\tilde\G(h_1),\tilde\G(h_2)) = \sum_{k\in\Z}\Cov(h_1(X_0), h_2(X_k)).
    \]
\end{cor}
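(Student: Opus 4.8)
The plan is to deduce Corollary~\ref{corollary_empirical} directly from Theorem~\ref{cor_functional_central_limit_theorem_martingale} by checking that, in the stationary one-step setting, all of the auxiliary Assumptions \ref{ass_clt_expansion_ass2}, \ref{ass3}, \ref{ass_clt_process}, \ref{ass_clt_fcont}, \ref{ass_clt_dexplicit} reduce either to trivialities or to the hypotheses stated in the corollary, and that the conclusion's covariance structure coincides with $\Sigma^{(\IK)}$. First I would set up the translation: a function $h\in\sH$ corresponds to $f(z,u):=\bar f(z,u):=h(z_0)$ with $D_{f,n}(u)\equiv 1$, so that $\G_n(f)=\tilde\G_n(h)$ and $\ID_n=1$ as declared. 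The text already records that Assumption~\ref{ass2} holds with $R\equiv\frac12$, $C_R=\frac12$, $L=L_{\sH}$ (so $L$ is here the single Hölder constant playing the role of an absolutely summable sequence supported on the zeroth coordinate), $s$ the given exponent, $C_{\sG}=\max\{h^{(1)}(0),h^{(2)}(0)\}$, and $p=\bar p>1$; the moment condition $\sup_{i,j}\|X_{ij}\|_{2sp/(p-1)}\le C_X$ is exactly $\|X_1\|_{2s}<\infty$ in the stationary case after noting the exponent is controlled once $\|X_1\|_{2s}<\infty$ and the envelope integrability is handled via $\bar h$. With $\Delta(k)\ge dL_{\sH}\delta_{2s}^X(k-1)^s$ as assumed, the bound $2dC_R\sum_{j=0}^{k-1}L_j(\delta^X(k-j-1))^s\le\Delta(k)$ collapses to the single-term inequality $dL_{\sH}\delta_{2s}^X(k-1)^s\le\Delta(k)$, which holds by hypothesis; hence $V_n$ from \reff{definition_v} is well-defined and the bracketing-integral hypothesis of the theorem is literally the bracketing-integral hypothesis of the corollary (with $\sH$ in place of $\sF$).

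Next I would dispatch the locally-stationary assumptions. The remark immediately preceding the corollary states that, when $X_i$ is stationary, $\bar f$ depends only on $z_0$, and $D_{f,n}\equiv1$, Assumptions \ref{ass_clt_expansion_ass2}, \ref{ass_clt_fcont}, \ref{ass_clt_process} and \ref{ass_clt_dexplicit} are "directly fulfilled"; my job is to make this precise assumption by assumption. For \ref{ass_clt_process}: the stationary approximating family is simply $\tilde X_i(u):=X_i$ for every $u$, with zero approximation error. For \ref{ass_clt_fcont} and \ref{ass_clt_expansion_ass2} (smoothness of $\bar f$ in $u$ and of the conditional-mean/variance maps in $u$): since nothing depends on $u$, any required modulus of continuity in $u$ is identically zero and any expansion is exact. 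For \ref{ass3}: this is presumably a structural/measurability-plus-integrability requirement on the classes $\bar\sF^{(1)},\bar\sF^{(2)}$, which is furnished by the stated Hölder continuity of $h^{(1)},h^{(2)}$ together with $\|X_1\|_{2s}<\infty$ and $\|\bar h(X_1)\|_{2\bar p}<\infty$; I would verify the moment bound on the envelopes of $\bar\sF^{(\kappa)}$ using $|h^{(\kappa)}(z_0)|\le C_{\sG}+L_{\sH}|z_0|^s$. Condition \reff{corollary_empirical_eq1}, submultiplicativity of $\beta(\cdot)$, is exactly the hypothesis that the central limit theorem in Section~\ref{sec_clt} (Theorem~\ref{theorem_clt_mult_martingale_paper}) needs to control the long-range-dependence/blocking step, so I would simply cite it as the instantiation of the corresponding requirement inside \ref{ass_clt_dexplicit} or \ref{ass3}.

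Finally I would identify the limit. Applying Theorem~\ref{cor_functional_central_limit_theorem_martingale} gives $[\tilde\G_n(h)]_{h\in\sH}\dto[\G(h)]_{h\in\sH}$, a centered Gaussian process with $\Cov(\G(h_1),\G(h_2))=\lim_n\Cov(\tilde\G_n(h_1),\tilde\G_n(h_2))=\Sigma^{(\IK)}$. It remains to show this limit equals $\sum_{k\in\Z}\Cov(h_1(X_0),h_2(X_k))$. I would compute $\Cov(\tilde\G_n(h_1),\tilde\G_n(h_2))=\frac1n\sum_{i,j=1}^n\Cov(h_1(X_i),h_2(X_j))=\sum_{|k|<n}\big(1-\tfrac{|k|}{n}\big)\gamma(k)$ with $\gamma(k):=\Cov(h_1(X_0),h_2(X_k))$ by stationarity, and then pass to the limit by dominated convergence on $\Z$, using that $\sum_k|\gamma(k)|<\infty$ — the absolute summability following from the functional-dependence bound $|\gamma(k)|\lesssim \delta^X$-type decay encoded in $\sum_k\Delta(k)=\beta(0)<\infty$ (which is implied by, e.g., the decay regimes in Table~\ref{table_v_values}, and in any case is needed for $V_n$ to be finite). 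I expect the main obstacle to be purely bookkeeping: matching each of the five postponed assumptions to its trivialized stationary form requires knowing their precise statements (deferred to later sections), so the real work is a careful, assumption-by-assumption verification rather than any substantive new estimate; the covariance identification is routine once absolute summability of $\gamma$ is in hand.
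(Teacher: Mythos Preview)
Your overall strategy---verify each of the postponed assumptions trivializes in the stationary, one-observation, unweighted setting and then invoke Theorem~\ref{cor_functional_central_limit_theorem_martingale}---is exactly what the paper does (implicitly: no separate proof is written, the text just asserts before the corollary that Assumptions \ref{ass_clt_expansion_ass2}, \ref{ass_clt_fcont}, \ref{ass_clt_process}, \ref{ass_clt_dexplicit} are ``directly fulfilled''). Two bookkeeping corrections, however, are needed.

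First, you have misidentified Assumption~\ref{ass3}: it is \emph{not} a structural or integrability condition on $\bar\sF^{(\kappa)}$ but precisely the submultiplicativity $\beta(q_1q_2)\le C_\beta\beta(q_1)\beta(q_2)$, i.e.\ exactly hypothesis \reff{corollary_empirical_eq1} of the corollary. So there is nothing further to verify there; your hedged ``inside \ref{ass_clt_dexplicit} or \ref{ass3}'' was right on the second guess. Second, in Assumption~\ref{ass2} the correct choice is $p=\infty$ (possible because $R\equiv\tfrac12$ is bounded, so $\|R\|_{2p}\le C_R$ holds for all $p$), which makes the moment requirement $\|X_1\|_{2sp/(p-1)}=\|X_1\|_{2s}<\infty$ exactly. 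The parameter $\bar p>1$ in the corollary is a \emph{different} exponent, entering only through the envelope condition $\sup_{i,u}\|\bar F(Z_i,u)\|_{2\bar p}<\infty$ in Assumption~\ref{ass_clt_expansion_ass2}; conflating $p$ with $\bar p$ as you do would force a stronger moment condition than stated. Finally, the limiting covariance can be read off directly from $\Sigma^{(1)}$ in Assumption~\ref{ass_clt_dexplicit}(i) (with $D_{f,n}\equiv1$ and $\tilde Z_j(u)=Z_j$), so your Ces\`aro-sum computation, while correct, is unnecessary.
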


\subsection{Application to empirical distribution functions of stationary processes}

As an example, consider the family of indicators
\[
	\sH = \{h_x(z_0) := \Ii_{\{z_0 \le x\}}: x\in \R\},
\]
which is the function class corresponding to the empirical distribution function
\[
	\big[\hat G_{n}(x)\big]_{x\in\R} = \Big[\frac{1}{n}\sum_{i=1}^{n}\Ii_{\{X_i  \le x\}}\Big]_{x\in\R} = \big[\tilde \G_n(h)\big]_{h \in \sH}.
\]
Suppose that $X_i$, $i=1,...,n$, is stationary. Define the conditional distribution function
\[
	G_z(x) = \IP(X_1 \le x|X_0 = z).
\]
Then we have the following corollary.
\begin{cor}\label{corollary_empdistr}
	Suppose that $X_i$ is stationary and $z \mapsto G_z(x)$ is Lipschitz continuous with Lipschitz constant $L_{G}$ for all $x\in\R$. Suppose that for some $s \in (0,\frac{1}{2}]$, $\|X_1\|_{2s} < \infty$ and $\delta_{2s}^{X}(k) \le c k^{-\alpha}$ with $\alpha > \frac{1}{s}$, $c > 0$. Then, 
\[
	\big[\hat G_{n}(x)\big]_{x\in\R} \dto \big[\tilde\G(x)\big]_{x\in\R},
\]
where $\tilde\G(x)$ is a Gaussian process with 
\[
	\Cov(\tilde\G(x),\tilde\G(y)) = \sum_{k\in\Z}\Cov(\Ii_{\{X_0 \le x\}}, \Ii_{\{X_k \le y\}}).
\]
\end{cor}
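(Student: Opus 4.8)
The plan is to apply Corollary \ref{corollary_empirical} to the class
\[
  \sH = \{h_x : h_x(z_0) = \Ii_{\{z_0 \le x\}},\ x\in\R\},
\]
and then to read off the conclusion in $\ell^{\infty}(\sH)$, identifying it with $\ell^{\infty}(\R)$ along $x\mapsto h_x$ (so that the empirical process indexed by $\sH$ becomes $[\hat G_n(x)]_{x\in\R}$). Hence the entire task is to check the hypotheses of Corollary \ref{corollary_empirical}. An envelope of $\sH$ is $\bar h\equiv 1$, so $\|\bar h(X_1)\|_{2\bar p}=1<\infty$ for, say, $\bar p=2$; the moment bound $\|X_1\|_{2s}<\infty$ is assumed; and I would set $\ID_n:=1$. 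For the H\"older conditions, note $h_x^{(1)}(z)=\IE[\Ii_{\{X_1\le x\}}\mid X_0=z]=G_z(x)$ and $h_x^{(2)}(z)=\IE[\Ii_{\{X_1\le x\}}^2\mid X_0=z]^{1/2}=G_z(x)^{1/2}$. Since $0\le G_z(x)\le 1$ and $z\mapsto G_z(x)$ is $L_G$-Lipschitz, for $|z-z'|\le 1$ one gets $|h_x^{(1)}(z)-h_x^{(1)}(z')|\le L_G|z-z'|\le L_G|z-z'|^s$ and, using $|\sqrt a-\sqrt b|\le\sqrt{|a-b|}$, $|h_x^{(2)}(z)-h_x^{(2)}(z')|\le (L_G|z-z'|)^{1/2}\le L_G^{1/2}|z-z'|^s$, where the last inequality needs $s\le\tfrac12$ --- this is exactly where that restriction enters. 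For $|z-z'|>1$ both differences are $\le 1\le|z-z'|^s$. Thus $h^{(1)},h^{(2)}$ are $s$-H\"older with $L_{\sH}=\max\{L_G,L_G^{1/2},1\}$, and Assumption \ref{ass2} holds as discussed before the corollary.

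Next I would fix the dependence bound. Using $\delta_{2s}^X(k-1)^s\le c^s(k-1)^{-\alpha s}$ for $k\ge 2$ and $\delta_{2s}^X(0)<\infty$ (from the moment assumption), choose $\Delta(k):=\tilde c\,k^{-\alpha s}$ with $\tilde c$ large enough that $\Delta(k)\ge dL_{\sH}\delta_{2s}^X(k-1)^s$ for all $k\ge 1$. Since $\alpha s>1$ by hypothesis, $\Delta$ is summable and $\beta(q)=\sum_{j\ge q}\Delta(j)$ satisfies $\beta(q)\asymp q^{1-\alpha s}$ with matching two-sided bounds, so the submultiplicativity \reff{corollary_empirical_eq1} holds for a suitable $C_\beta$.

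It remains to bound $\sup_n\int_0^1\psi(\eps)\sqrt{\IH(\eps,\sH,V_n)}\,d\eps$. Stationarity, together with the fact that each $h\in\sH$ depends only on the current coordinate, gives $\|h_x-h_y\|_{2,n}^2=\IP(\min(x,y)<X_1\le\max(x,y))=|F(x)-F(y)|$ with $F(x):=\IP(X_1\le x)$, independent of $n$; so $\IH(\eps,\sH,\|\cdot\|_{2,n})$ is the $L^2$-bracketing entropy of the class of half-line indicators, which by the classical partition-of-$F$ construction is at most $C\log(1/\eps\vee e)$, uniformly in $n$. Since $\ID_n=1$ and $\Delta(k)\le\tilde c\,k^{-\alpha s}$ with $\alpha s>1$, a direct estimate of \reff{definition_v} (equivalently, the first column of Table \ref{table_v_values}) yields $V_n(g)\le C'\|g\|_{2,n}^{\,1-1/(\alpha s)}$ for all $g$ with $\|g\|_{2,n}\le 1$ and some $n$-free $C'$, so any $\eta$-bracket for $\|\cdot\|_{2,n}$ with $\eta\asymp\eps^{\alpha s/(\alpha s-1)}$ is an $\eps$-bracket for $V_n$, whence $\IH(\eps,\sH,V_n)\le C''\log(1/\eps\vee e)$ uniformly in $n$. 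Therefore $\sup_n\int_0^1\psi(\eps)\sqrt{\IH(\eps,\sH,V_n)}\,d\eps\lesssim\int_0^1\psi(\eps)\sqrt{\log(1/\eps\vee e)}\,d\eps<\infty$, as the integrand grows only like $\log(1/\eps)\log\log(1/\eps)$ near $0$. Corollary \ref{corollary_empirical} then delivers the claimed weak convergence with the stated covariance. I do not expect a genuine obstacle here: the indicator class is extremely small (logarithmic bracketing entropy), so the real content is the bookkeeping --- carrying out the H\"older estimate for the square root $h^{(2)}=G_{\cdot}(x)^{1/2}$ (the reason for $s\le\tfrac12$) and checking that every constant (the entropy bound, the $V_n$-simplification constant, $\Delta$ and $C_\beta$) is independent of $n$, which works precisely because stationarity collapses $\|\cdot\|_{2,n}$ to a single fixed $L^2$-norm.
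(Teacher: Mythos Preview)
Your proposal is correct and follows essentially the same route as the paper: apply Corollary \ref{corollary_empirical} after checking the $s$-H\"older continuity of $h^{(1)}=G_{\cdot}(x)$ and $h^{(2)}=G_{\cdot}(x)^{1/2}$, choose a polynomially decaying $\Delta(k)$, verify submultiplicativity of $\beta$, and bound the bracketing entropy of the indicator class. The only cosmetic differences are that the paper handles the H\"older estimate via the single inequality $\min\{1,w\}\le w^{a}$ (with $w=L_G|z-z'|$ and $a=s$, respectively $w=(L_G|z-z'|)^{1/2}$ and $a=2s$) in place of your case split $|z-z'|\lessgtr 1$, and that the paper quotes Table \ref{table_v_values} to convert the entropy integral while you translate brackets directly from $\|\cdot\|_{2,n}$ to $V_n$; both arguments yield the same conclusion.
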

\begin{proof}[Proof of Corollary  \ref{corollary_empdistr}]
	Due to $\min\{1,w\} \le w^a$ for $a\in [0,1]$, $w \ge 0$, we have that for any $s \in (0,\frac{1}{2}]$,
\[
	|G_z(x) - G_{z'}(x)| \le \min\{1, L_G |z-z'|\} \le L_G^{s} |z-z'|^{s}.
\]
and
\[
	|G_z(x) - G_{z'}(x)|^{1/2} \le \min\{1, (L_G |z-z'|)^{1/2}\} \le L_G^{s} |z-z'|^{s}.
\]
Choose $\Delta(k) = c L_{G} (k-1)^{-\alpha s}$, which is easily seen to satisfy \reff{corollary_empirical_eq1} (in particular, $\beta(q) < \infty$ for $q\in \N$) for some $C_{\beta} = C_{\beta}(\alpha, s, c, L_{G})$ chosen large enough.

Note that $\IH(\varepsilon,\sH,\|\cdot\|_{2,n}) = O(\log(\varepsilon^{-1}))$ for a given $\varepsilon > 0$ by \cite[Example 19.6]{Vaart98}, since in the stationary situation of the corollary, $\|h\|_{2,n} = \IE[h(X_1)^2]^{1/2}$. Since $\alpha s > 1$, Table \ref{table_v_values} implies that
\[
	\int_0^{1} \psi(\varepsilon)\sqrt{\IH(\gamma,\sH,V_n)} d \varepsilon = O\Big(\int_0^{1}\psi(\varepsilon) \varepsilon^{-\frac{1}{\alpha s}}\sqrt{\log(\varepsilon^{-1})} d \varepsilon\Big) < \infty.
\]
Corollary \ref{corollary_empirical} now implies the assertion. 
\end{proof}

\subsection{Comparison with other functional convergence results for the  empirical distribution function of stationary processes}

In the literature, several functional convergence results for the empirical distribution function were already provided. Here we list some approaches which are closely related to the functional dependence measure and compare the results to Corollary \ref{corollary_empdistr}.

In \cite{dehling2001}, stationary processes of the form $X_i = J(\sG_i)$ are considered where $\sG_i = (\varepsilon_i,\varepsilon_{i-1},...)$ and $J$ is measurable. Therein, the function $J$ itself is assumed to fulfill a (geometrically decaying) Lipschitz condition, i.e. for any sequences $(a_i),(a_i')$ with $a_i=a_i'$, $i \le k$,
\begin{equation}
    \big| J((a_i)) - J((a_i')) \big| \le C \alpha^k\label{comparison1_eq1}
\end{equation}
for some constants $C,\alpha > 0$. Based on this, 1-approximation coefficients $a_k$ are defined as upper bounds on
\[
    \IE\big\| X_0 - \IE[X_0|\sigma(\varepsilon_{0},...,\varepsilon_k)]\big\|_1 \le a_k.
\]
There is a strong connection between $\delta_{1}^{X}(k)$ and $a_k$, since it is possible to choose $a_k \le \sum_{j=k+1}^{\infty}\delta_{1}^X(j)$. The work of \cite[Theorem 5]{dehling2001} shows that under summability conditions on $a_k$, the $\beta$-mixing coefficients \emph{and} monotonicity assumptions on $\sF = \{f_t:t\in [0,1]\}$, a uniform central limit theorem for $(\G_n(f_t))_{t\in[0,1]}$ holds. Compared to our setting, \reff{comparison1_eq1} would lead to a geometrically decaying functional dependence measure $\delta^{X}$. Thus, the result in our Corollary \ref{corollary_empdistr} is much less restrictive regarding the dependency of the underlying process.

In \cite[Theorem 2.1]{dedecker10}, a uniform central limit theorem for the empirical distribution function is shown under $\beta_2(k) = O(k^{-1-\gamma})$, $\gamma > 0$, by using specifically designed dependence coefficients $\beta_2(k)$, $k\in\N_0$, based on the idea of absolute regularity. We now compare this result with Corollary \ref{corollary_empdistr}. In \cite[Section 6.1]{dedecker07} it was shown that if $X_i = J(\sG_i)$ is stationary and the distribution function of $X_1$ is Lipschitz continuous, then for any $\nu \in [0,1]$ one has
\[
    \beta_2(k) \le C\cdot \Big(\sum_{j=k+1}^{\infty}\delta_{\nu}^{X}(j)^{\nu'}\Big)^{\frac{\nu}{\nu'(\nu+1)}},\quad\quad \nu' = \min\{\nu,1\},
\]
where $C > 0$ is a constant independent of $k$. The condition $\beta_2(k) = O(k^{-1-\gamma})$ now naturally provides a decay condition on $\delta_{\nu}^{X}(k)$. With $\nu = 2s$ which corresponds to the moments of the process we have given in Corollary \ref{corollary_empdistr}, we see after a short calculation that $\beta_2(k) = O(k^{-1-\gamma})$ asks for
\[
    \alpha \ge \frac{1}{s} + \frac{\gamma}{2s} + \gamma + 1.
\]
In other words, if the results from \cite{dedecker10}, \cite{dedecker07} are transferred to the functional dependence measure setting, they need a more restrictive decay condition.


Meanwhile, \cite{berkes09} investigates strong approximations of the multivariate empirical distribution function process (that is, contrary to our approach, the results are limited to empirical distribution functions). They assume that the stationary process $X_i = J(\sG_i)$ allows for approximations $(X_{i}^{(m)})$ such that for all $m,i$,
\begin{equation}
    \IP(|X_i - X_i^{(m)}| \ge m^{-A}) \le m^{-A}\label{otherapproach_eq1}
\end{equation}
with some $A > 4$, and for any disjoint intervals $I_1,...,I_r$ of integers and any positive integers $m_1,...,m_r$, the vectors $\{X_{i}^{(m_1)}:i \in I_1\},...,\{X_i^{(m_r)}:j\in I_r\}$ are independent provided the separation between $I_k$ and $I_l$ is greater than $m_k+m_l$. Under these assumptions, \cite[Theorem 1, Corollary 1]{berkes09} shows that the empirical distribution function of $X_i$ weakly converges to some Gaussian process.

When having knowledge about the functional dependence measure, $X_i^{(m)}$ could be chosen as $X_i^{(m)} = \IE[X_i|\varepsilon_i,...,\varepsilon_{i-m}]$. Then by Markov's inequality,
\[
    \IP\big(|X_i - X_i^{(m)}| \ge m^{-A}\big) \le \frac{\|X_i - X_i^{(m)}\|_{2s}^{2s}}{m^{-2sA}} \le  \big(m^{A}\cdot \sum_{j=m+1}^{\infty}\delta_{2s}^{X}(j)\big)^{2s},
\]
so that \reff{otherapproach_eq1} leads to a decay condition on $\delta_\nu^{X}(j)$. After a short calculation, we see that \reff{otherapproach_eq1} is fulfilled if  
\[
    \alpha \ge \big(\frac{1}{2s} + 1) A + 1,
\]
again a more restrictive decay condition than given in Corollary \ref{corollary_empdistr}.


The work of \cite{durieu14} discusses the functional convergence of the multivariate empirical distribution function under a general growth condition imposed on the moments of $\sum_{i=1}^{n}\{h(X_i) - \IE h(X_i)\}$, where $h \in \sH_{\gamma}$ are H\"older continuous functions with exponent $\gamma\in (0,1]$ approximating the indicator functions. They also relate their result to the functional dependence measure.




\subsection{Application to empirical distribution functions of locally stationary processes}

In this section, we apply our theory to the  localized empirical distribution function  $\hat G_{n,h}(x,v)$ from \reff{example_distribution_emp_eq1} on a locally stationary process as motivated in the beginning of Section \ref{sec_model}. Afterwards, we compare our result with \cite{mayer2019}.

Suppose that $X_i$ is locally stationary in the sense that for each $u \in [0,1]$, there exists a stationary process $\tilde X_i(u) = J(\sA_i,u)$, $i\in\Z$, for a measurable function $J$ such that
\[
    \|X_i - \tilde X_i(\frac{i}{n})\|_{2s} \le C_Xn^{-\varsigma}, \quad\quad \|\tilde X_i(u) - \tilde X_i(u')\|_{2s} \le C_X|u-u'|^\varsigma
\] for a constant $C_X > 0$, $\varsigma \in (0,1]$, $u,u' \in [0,1]$ and $i \in \{1,...,n\}$.

Recall $G(x,v) = \IP(\tilde X_1(v) \le x)$. Define the conditional distribution function of the stationary approximation of $X_i$,
\[
	G_z(x,v) = \IP(\tilde X_1(v) \le x \mid \tilde X_0(v) = z).
\]
Finally, we have to impose a regularity assumption on the distribution function $G_i(x) := \IP(X_i \le x)$ of the locally stationary process itself. 

We have the following generalization of Corollary \ref{corollary_empdistr}. 

\begin{cor}\label{corollary_empdistr22}
	Let $v\in (0,1)$. Suppose that there exists some $L_G > 0$ such that
	\begin{itemize}
	    \item $z \mapsto G_z(x,v)$ is Lipschitz continuous with constant $L_{G}$ for all $x\in\R$,
	    \item $x \mapsto G(x,v)$ is Lipschitz continuous with constant $L_{G}$,
	    \item $x \mapsto G_i(x)$ is Lipschitz continuous with constant $L_{G}$ and \\ $\lim_{x\to -\infty}\sup_{i,n}G_i(x) = 0$, $\lim_{x\to +\infty}\inf_{i,n}G_i(x) = 1$.
	\end{itemize}
	Assume that $K:\R \to \R$ is a Lipschitz continuous kernel function with $\int K(x) dx = 1$ and support $\subset [-\frac{1}{2},\frac{1}{2}]$.
	
	Furthermore, for some $s \in (0,\frac{1}{2}]$ let $\sup_{i,n}\|X_i\|_{2s} < \infty$ and $\delta_{2s}^{X}(k) \le c k^{-\alpha}$ with $\alpha > \frac{1}{s}$, $c > 0$.
	
	Then for $h n \to \infty$, $h \to 0$, 
\[
	\big[\hat G_{n,h}(x,v)\big]_{x\in\R} \dto \big[\tilde\G(x,v)\big]_{x\in\R},
\]
where $\tilde\G(x,v)$ is a Gaussian process with 
\[
	\Cov(\tilde\G(x,v),\tilde\G(y,v)) = \int K(u)^2 du\cdot \sum_{k\in\Z}\Cov(\Ii_{\{\tilde X_0(v) \le x\}}, \Ii_{\{\tilde X_k(v) \le y\}}).
\]
\end{cor}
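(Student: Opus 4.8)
The plan is to apply the master result, Theorem \ref{cor_functional_central_limit_theorem_martingale}, to the function class
\[
    \sF = \Big\{(z,u)\mapsto f_x(z,u) := \tfrac{1}{\sqrt h}K\big(\tfrac{u-v}{h}\big)\Ii_{\{z_0\le x\}} : x\in\R\Big\},
\]
so that $\G_n(f_x)=\sqrt{nh}\big(\hat G_{n,h}(x,v)-\IE\hat G_{n,h}(x,v)\big)$, with $D_{f_x,n}(u)=\tfrac{1}{\sqrt h}K(\tfrac{u-v}{h})$ and $\bar f_x(z,u)=\Ii_{\{z_0\le x\}}$ as indicated after \reff{f_form_decomposition}. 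I would first check Assumption \ref{ass2}, following the proof of Corollary \ref{corollary_empdistr}: the one--step classes reduce to $\bar\sF^{(1)}=\{z\mapsto G_z(x,v):x\in\R\}$ and $\bar\sF^{(2)}=\{z\mapsto G_z(x,v)^{1/2}\}$, and Lipschitz continuity of $z\mapsto G_z(x,v)$ together with $\min\{1,w\}\le w^{2s}$ (valid since $s\in(0,\tfrac12]$) shows these are $(L,s,R,C)$-classes with $R\equiv\tfrac12$, $L_0=L_G^{s}$, $L_j=0$ for $j\ge1$. The moment bound on $R$ is then trivial for every $p$, so taking $p=\infty$ reduces the second condition of Assumption \ref{ass2} to $\sup_{i,j}\|X_{ij}\|_{2s}\le C_X$, which holds. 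From $\delta^X_{2s}(k-1)^{s}\le (c(k-1)^{-\alpha})^{s}$ one may take $\Delta(k):=C(k-1)^{-\alpha s}$ for a suitable $C$; since $\alpha s>1$, $\beta(q)$ is finite and $V_n$ is well defined by \reff{definition_v}. As all $f\in\sF$ carry the same weight, the right choice is
\[
    \ID_n := \Big(\tfrac{1}{nh}\sum_{i=1}^{n}K\big(\tfrac{i/n-v}{h}\big)^2\Big)^{1/2},
\]
a Riemann sum that, since $K$ is Lipschitz with support in $[-\tfrac12,\tfrac12]$, tends to $(\int K^2)^{1/2}\in(0,\infty)$ as $h\to0$, $nh\to\infty$, and is hence eventually bounded away from $0$ and $\infty$.

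Next I would verify the bracketing--entropy condition. For $x<y$ I split $K=K_+-K_-$ to accommodate sign changes of the kernel and take the bracket $[l,u]$ with
\[
    l(z,u) = \tfrac{1}{\sqrt h}K_+\big(\tfrac{u-v}{h}\big)\Ii_{\{z_0\le x\}} - \tfrac{1}{\sqrt h}K_-\big(\tfrac{u-v}{h}\big)\Ii_{\{z_0\le y\}},
\]
and $u$ the same expression with $x$ and $y$ interchanged, so that $l\le f_t\le u$ for all $t\in[x,y]$ and
\[
    \|u-l\|_{2,n}^2 = \tfrac{1}{nh}\sum_{i=1}^{n}K\big(\tfrac{i/n-v}{h}\big)^2\big(G_i(y)-G_i(x)\big)\le \ID_n^2\sup_i\big(G_i(y)-G_i(x)\big).
\]
Using Lipschitz continuity of $x\mapsto G_i(x)$ together with $\lim_{x\to-\infty}\sup_{i,n}G_i(x)=0$ and $\lim_{x\to+\infty}\inf_{i,n}G_i(x)=1$, one builds, for each $\varepsilon$, a grid $-\infty=x_0<\dots<x_N=+\infty$ with $\sup_i(G_i(x_j)-G_i(x_{j-1}))\le\varepsilon^2/\ID_n^2$ and $N=O(\varepsilon^{-2})$ (two tail brackets plus an $O(\varepsilon^{-2})$ partition of the range of $\sup_i G_i$), whence $\IH(\varepsilon,\sF,\|\cdot\|_{2,n})=O(\log\varepsilon^{-1})$ uniformly in $n$. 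Since $\ID_n$ is eventually in a compact subset of $(0,\infty)$ and $\Delta(j)\asymp j^{-\alpha s}$ with $\alpha s>1$, Table \ref{table_v_values} (applied with its exponent equal to $\alpha s$) gives
\[
    \sup_{n}\int_0^1\psi(\varepsilon)\sqrt{\IH(\varepsilon,\sF,V_n)}\,d\varepsilon = O\Big(\int_0^{\tilde\sigma}\psi(\varepsilon)\varepsilon^{-1/(\alpha s)}\sqrt{\log\varepsilon^{-1}}\,d\varepsilon\Big)<\infty,
\]
the integral converging because the power $1/(\alpha s)<1$ dominates the logarithmic factors carried by $\psi$.

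It then remains to check Assumptions \ref{ass_clt_expansion_ass2}, \ref{ass3}, \ref{ass_clt_process}, \ref{ass_clt_fcont}, \ref{ass_clt_dexplicit}, which enter only because $X_i$ is now locally stationary. As $\bar f_x$ does not depend on $u$, the continuity/expansion conditions on $\bar f$ in the time argument (Assumptions \ref{ass_clt_expansion_ass2}, \ref{ass3}, \ref{ass_clt_fcont}) hold with trivial moduli; Assumption \ref{ass_clt_process} is precisely the assumed approximation bounds $\|X_i-\tilde X_i(i/n)\|_{2s}\le C_Xn^{-\varsigma}$, $\|\tilde X_i(u)-\tilde X_i(u')\|_{2s}\le C_X|u-u'|^{\varsigma}$ together with $\delta^X_{2s}(k)\le ck^{-\alpha}$; and the Lipschitz/normalization hypotheses on $G(\cdot,v)$ and the $G_i$ supply the remaining regularity for the pointwise central limit theorem and for Assumption \ref{ass_clt_dexplicit}. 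For the covariance I would expand
\[
    \Cov(\G_n(f_x),\G_n(f_y)) = \sum_{k\in\Z}\tfrac{1}{nh}\sum_{i}K\big(\tfrac{i/n-v}{h}\big)K\big(\tfrac{(i+k)/n-v}{h}\big)\Cov\big(\Ii_{\{X_i\le x\}},\Ii_{\{X_{i+k}\le y\}}\big)
\]
and pass to the limit: for fixed $k$, $k/(nh)\to0$, so the two kernel evaluations collapse to $K(\tfrac{i/n-v}{h})^2$, the Riemann sum $\tfrac{1}{nh}\sum_iK(\tfrac{i/n-v}{h})^2\to\int K^2$, and local stationarity replaces $\Cov(\Ii_{\{X_i\le x\}},\Ii_{\{X_{i+k}\le y\}})$ by $\Cov(\Ii_{\{\tilde X_0(v)\le x\}},\Ii_{\{\tilde X_k(v)\le y\}})$ on the shrinking support of the kernel; summing over $k$ by dominated convergence, with $\delta^X_{2s}$ dominating the series, yields $\int K(u)^2\,du\cdot\sum_k\Cov(\Ii_{\{\tilde X_0(v)\le x\}},\Ii_{\{\tilde X_k(v)\le y\}})=\Sigma^{(\IK)}$. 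Theorem \ref{cor_functional_central_limit_theorem_martingale} then delivers the conclusion.

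The main obstacle is this last step: unlike in the stationary Corollary \ref{corollary_empdistr}, one must control the interplay between the kernel localization — a purely deterministic Riemann--sum and degeneracy ($k/(nh)\to0$) argument valid for $h\to0$, $nh\to\infty$ — and the replacement of $X_i$ by the stationary approximation $\tilde X_i(v)$, uniformly over the noncompact index $x\in\R$ and with the nonsmooth, discontinuous functions $\bar f_x$; this is where the tail normalizations of the $G_i$, the $2s$--moment bound, and the polynomial decay of $\delta^X_{2s}$ are genuinely used, and where the extra Assumptions \ref{ass_clt_process}--\ref{ass_clt_dexplicit} must be matched to the stated hypotheses. By comparison, Assumption \ref{ass2} and the entropy bound are essentially the stationary computation of Corollary \ref{corollary_empdistr} re-weighted by $D_{f_x,n}$, the only new wrinkles being the sign-splitting of $K$ in the bracketing step and the Riemann-sum identification of $\ID_n$.
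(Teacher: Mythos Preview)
Your overall strategy---apply Theorem \ref{cor_functional_central_limit_theorem_martingale} and verify its assumptions one by one---is exactly what the paper does, and your treatment of Assumption \ref{ass2}, the entropy bound, and the covariance limit is correct and in places more careful than the paper's (your sign-splitting $K=K_+-K_-$ in the bracketing step is a nice point the paper glosses over).

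There is, however, a genuine gap. You write that ``As $\bar f_x$ does not depend on $u$, the continuity/expansion conditions on $\bar f$ in the time argument (Assumptions \ref{ass_clt_expansion_ass2}, \ref{ass3}, \ref{ass_clt_fcont}) hold with trivial moduli.'' This is a misreading of Assumption \ref{ass_clt_expansion_ass2}: it is \emph{not} a continuity condition in the time argument $u$, but a uniform $L^2$ modulus of continuity in the \emph{spatial} argument $z$, namely
\[
    \sup_{u,v}\frac{1}{c^s}\IE\Big[\sup_{|a|_{L_{\sF},s}\le c}\big|\bar f(\tilde Z_0(v),u) - \bar f(\tilde Z_0(v)+a,u)\big|^2\Big] < \infty.
\]
For indicator functions $\bar f_x(z)=\Ii_{\{z_0\le x\}}$ this is far from trivial: the inner supremum is $\Ii_{\{x-c/L_G < \tilde X_0(v)\le x\}}$ (up to symmetrization), so the expectation is $G(x,v)-G(x-c/L_G,v)$, and one needs precisely the second bullet of the corollary---Lipschitz continuity of $x\mapsto G(x,v)$---to bound this by $\min\{1,c\}$ and hence the whole display by $c^{-s}\min\{1,c\}\le 1$. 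This is the only place in the proof where that hypothesis is used, and your proposal leaves it unaccounted for. (Assumption \ref{ass3} is also not a continuity condition on $\bar f$; it is the submultiplicativity of $\beta$, which follows from your polynomial choice of $\Delta$.) Once you supply this short computation, your argument is complete and essentially coincides with the paper's.
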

\begin{proof}[Proof of Corollary  \ref{corollary_empdistr22}]
	We verify the conditions of Theorem \ref{cor_functional_central_limit_theorem_martingale}. 
	By $\min\{1,w\} \le w^a$ for $a\in [0,1]$, $w \ge 0$, we have for any $s \in (0,\frac{1}{2}]$,
\[
	|G_z(x,v) - G_{z'}(x,v)| \le \min\{1, L_G |z-z'|\} \le L_G^{s} |z-z'|^{s}.
\]
and
\[
	|G_z(x,v) - G_{z'}(x,v)|^{1/2} \le \min\{1, (L_G |z-z'|)^{1/2}\} \le L_G^{s} |z-z'|^{s}.
\]
This shows Assumption \ref{ass2} with $p = \infty$, $R(\cdot) = \frac{1}{2} = C_R$.

Choose $\Delta(k) = c L_{G} (k-1)^{-\alpha s}$, which can easily be seen to satisfy Assumption \ref{ass3} (in particular, $\beta(q) < \infty$ for $q\in \N$) for some $C_{\beta} = C_{\beta}(\alpha, s, c, L_{G})$ chosen large enough.
Regarding Assumption \ref{ass_clt_expansion_ass2} we first have
\begin{eqnarray*}
    \frac{1}{c^s}\IE \sup_{L_G|a|\le c}[|\Ii_{\{\tilde Z_0(v) \le x\}}- \Ii_{\{\tilde Z_0(v)+a \le x\}}|^2] &\le& \frac{1}{c^s} \IE|\Ii_{\{\tilde Z_0(v) \le x\}} - \Ii_{\{\tilde Z_0(v) \le x-\frac{c}{L_G}\}}| \\
    &\le& \frac{1}{c^s}(\IP(\tilde Z_0(v) \le x) - \IP(\tilde Z_0(v) \le x-\frac{c}{L_G})) \\
    &\le& \frac{1}{c^s}(G_z(x,v) - G_z(x-\frac{c}{L_G},v))\\
    &\le& \frac{1}{c^s}\min\{1,c\}\le 1.
\end{eqnarray*} The envelope function is the constant $1$-function and satisfies the required condition trivially. Therefore, Assumption \ref{ass_clt_expansion_ass2} holds true.
Assumption \ref{ass_clt_fcont} is automatically satisfied for fixed $v \in (0,1)$. For Assumption \ref{ass_clt_dexplicit}, note that $D_{f,n}(u) = \frac{1}{\sqrt{h}}K(\frac{u-v}{h})$ satisfies
\[
    \frac{1}{n}\sum_{i=1}^{n}D_{f,n}(\frac{i}{n})^2 \le \frac{1}{nh}\sum_{i=1}^{n}K(\frac{i/n-v}{h})^2 \le |K|_{\infty}^2 < \infty,
\]
and $ D_{f,n}^{\infty} \le \frac{1}{\sqrt{h}}|K|_{\infty}$. Thus $\frac{D_{f,n}^{\infty}}{\sqrt{n}} \le \frac{|K|_{\infty}}{\sqrt{nh}} \to 0$, and the support satisfies $\text{supp}[D_{f,n}(\cdot)] \subset [v-h,v+h]$. Finally, $h^{1/2}D_{f,n}^{\infty} \le |K|_{\infty} < \infty$ and, since $v\in (0,1)$,
\[
    \lim_{n\to\infty}\int_0^{1} D_{f,n}(u) D_{g,n}(u) du = \lim_{n\to\infty}\frac{1}{h}\int_0^{1}K(\frac{u-v}{h})^2 du = \int K(u)^2 du.
\]
This shows all conditions of Assumption \ref{ass_clt_dexplicit} (ii).

It holds that $\IH(\varepsilon,\sH,\|\cdot\|_{2,n}) = O(\log(\varepsilon^{-1}))$ which is proven subsequently. 

Let $\varepsilon > 0$. Since $\lim_{x\to -\infty}\sup_{i,n}G_i(x) = 0$ and $\lim_{x\to +\infty}\inf_{i,n}G_i(x) = 1$, there exists $x_N = x_N(\varepsilon) > x_1 = x_1(\varepsilon) > 0$ such that $\sup_{i,n}G_i(x_1) \le \varepsilon$, $\inf_{i,n}G_i(x_1) \ge 1 - \varepsilon$. Define $x_{j+1} := x_1 + j\cdot \frac{\varepsilon^2}{L_G}$, $j = 1,2,...,N-1$ with $N = 1 +  \lceil \frac{(x_N - x_1)L_G}{\varepsilon^2}\rceil$. Put $x_0 = -\infty$ and $x_{N+1} = \infty$. Then for $j = 1,2,...,N-1$ we have
\begin{eqnarray*}
    &&\|\Ii_{\{\cdot \le x_{j+1}\}} - \Ii_{\{\cdot \le x_{j+1}\}}\|_{2,n}^2\\
    &\le& \sup_{i=1,...,n}\IE[(\Ii_{\{X_i \le x_{j+1}\}} - \Ii_{\{X_i \le x_{j}\}})^2] = \sup_{i=1,...,n}[G_i(x_{j+1}) - G_i(x_j)]\\
    &\le& L_G|x_{j+1} - x_j| \le \varepsilon^2,
\end{eqnarray*}
which shows that $[\Ii_{\{\cdot \le x_{j}\}},\Ii_{\{\cdot \le x_{j+1}\}}]$, $j=0,...,N$ are $\varepsilon$-brackets with respect to $\|\cdot\|_{2,n}$. Hence, $\IH(\varepsilon,\sH,\|\cdot\|_{2,n}) = O(\log(\varepsilon^{-1}))$.

Since $\alpha s > 1$, Table \ref{table_v_values} implies that
\[
	\int_0^{1} \psi(\varepsilon)\sqrt{\IH(\gamma,\sH,V_n)} d \varepsilon = O\Big(\int_0^{1}\psi(\varepsilon) \varepsilon^{-\frac{1}{\alpha s}}\sqrt{\log(\varepsilon^{-1})} d \varepsilon\Big) < \infty.
\]
Theorem \ref{cor_functional_central_limit_theorem_martingale} now implies the assertion. 

\end{proof}


The recently published work \cite{mayer2019} considers functional convergence of the empirical distribution function of piece-wise locally stationary processes. They impose two rather restrictive assumptions, namely they ask the functional  dependence measure to decay geometrically. Furthermore, a  Lebesgue density of the process $X_i$ has to exist, cf. \cite[assumptions (A3) and (A5)]{mayer2019}. In the above Corollary \ref{corollary_empdistr22}, we were able to provide much weaker assumptions, in particular, we only need polynomial decay of the dependence coefficients and no density assumption is made.

\subsection{Further applications}
\label{sec_further_appli}

Our theory allows for empirical process theory of general function classes. We illustrate further applications in two short examples. 

\textbf{Example 1 (Distribution of residuals):} Consider the locally stationary time series model which is defined recursively via
    \[
        X_i = m(X_{i-1},\frac{i}{n}) + \sigma(X_{i-1},\frac{i}{n})\varepsilon_i,\quad\quad i = 1,...,n,
    \]
    where $\varepsilon_i$, $i\in\Z$, is an i.i.d. sequence of random variables and $\sigma, m:\R \times [0,1] \to \R$.

Besides estimation of $m(\cdot), \sigma(\cdot)$, it may also be of interest to derive the distribution function  $G_{\varepsilon}$ of $\varepsilon_i$. Following the approach of \cite{akritas2001}, we first have to specify estimators $\hat m$, $\hat \sigma$ for $m,\sigma$, respectively, and define empirical residuals $\hat \varepsilon_i = \frac{X_{i} - \hat m(X_{i-1},i/n)}{\hat \sigma(X_{i-1},i/n)}$. Then the convergence of $(\hat G_{\varepsilon}(x))_{x\in\R}$, 
        \[
            \hat G_{\varepsilon}(x) = \frac{1}{n}\sum_{i=1}^{n}\Ii_{\{\hat \varepsilon_i \le x\}} = \frac{1}{n}\sum_{i=1}^{n}\Ii_{\{\varepsilon_i \le x\cdot \frac{\hat \sigma(X_{i-1},i/n)}{\sigma(X_{i-1},i/n)} + \frac{\hat m(X_{i-1},i/n) - m(X_{i-1},i/n)}{\sigma(X_{i-1},i/n)}\}}
        \]
        can be discussed with empirical process theory and analytic properties of $\hat m, \hat \sigma$.
        
In the following example we make use of the maximal inequality provided in Section \ref{sec_max_ineq_tight}, Corollary \ref{lemma_hoeffding_dependent_rates}.

\textbf{Example 2 (Kernel density estimation): }
Let $K:\R\to\R$ be some bounded kernel function which is Lipschitz continuous, satisfies $\int K(u) du = 1$ and has support $\subset[-\frac{1}{2},\frac{1}{2}]$. For some bandwidth $h > 0$, put $K_h(\cdot) := \frac{1}{h}K(\frac{\cdot}{h})$. 

We consider the localized density estimate of the density $g_{\tilde X_1(v)}$ of the stationary approximation $\tilde X_1(v)$,
    \[
        \hat g_{n,h}(x,v) = \frac{1}{n}\sum_{i=1}^{n}K_{h_1}(\frac{i}{n} - v) K_{h_2}(X_i - x)
    \]
where $h_1,h_2 > 0$ are bandwidths. Suppose that:
    \begin{itemize}
        \item For some $s \le \frac{1}{2}$, $\alpha > s^{-1}$, $\delta_{2s}^{X}(j) = O(j^{-\alpha})$ and $\sup_{i,n}\|X_i\|_{2s} < \infty$.
        \item There exists $p_{K} \ge 2s, C_{K} > 0$ such that for $u$ large enough, $|K(u)| \le C_{K}|u|^{-p_{K}}$.
        \item There exist constants $C_{\infty}, L_G > 0$ such that the following holds. The conditional density $g_{X_i|X_{i-1} = z}$ of $X_i$ given $X_{i-1} = z$ satisfies $|g_{X_i|X_{i-1} = z}|_{\infty} \le C_{\infty}$ and for any $x \in \R$, $z \mapsto g_{X_i|X_{i-1} = z}(x)$ is Lipschitz continuous with constant $L_G$.
    \end{itemize}
    
    We show that if $\log(n)\big(nh_1h_2^{\frac{\alpha (s\wedge \frac{1}{2})}{\alpha (s\wedge\frac{1}{2})-1}}\big)^{-1} = O(1)$,
    \begin{equation}
        \sup_{x\in\R,v\in[0,1]}\big|\hat g_{n,h}(x,v) - \IE \hat g_{n,h}(x,v)\big| = O_p\big(\sqrt{\frac{\log(n)}{nh_1h_2}}\big).\label{example_densityestimation_eq1}
    \end{equation}
    To do so, note that
    \[
        \sqrt{nh_1h_2}\big(\hat g_{n,h}(x,v) - \IE \hat g_{n,h}(x,v)\big) = \G_n(f_{x,v}),
    \]
with
\[
        \sF = \{f_{x,v}(z,u) = \sqrt{h_1}K_{h_1}(u-v)\cdot \sqrt{h_2} K_{h_2}(z - x): x \in \R, v \in [0,1]\}.
\]
To obtain \reff{example_densityestimation_eq1}, we use Corollary \ref{lemma_hoeffding_dependent_rates}. We have for $\kappa \in \{1,2\}$,
\begin{eqnarray*}
    \mu^{(\kappa)}(z) &:=& \frac{1}{h_2} \IE [K_{h_2}(X_i - x)^\kappa \mid X_{i-1} = z]^\kappa \\
    &=& \frac{1}{\sqrt{h_2}} \Big( \int K\big( \frac{y-x}{h_2} \big) f_{X_i \mid X_{i-1} = z} (y) dy \Big)^{1/\kappa} \\
    &=& h_2^{\frac{1}{\kappa}-\frac{1}{2}} \Big( \int K(w)^\kappa f_{X_i \mid X_{i-1} = z} (x+ wh_2) dw \Big)^{1/\kappa}.
\end{eqnarray*} Hence,
\begin{eqnarray*}
    &&|\mu^{(\kappa)}(z) - \mu^{(\kappa)}(z')| \\ && \qquad \le h_2^{\frac{1}{\kappa}-\frac{1}{2}} \Big( \int |K(w)|^\kappa |f_{X_i \mid X_{i-1} = z} (x+ wh_2) - f_{X_i \mid X_{i-1} = z'} (x+ wh_2)| dw \Big)^{1/\kappa}.
\end{eqnarray*}
On the other hand, $|f_{X_i \mid X_{i-1} = z} (x+ wh_2) - f_{X_i \mid X_{i-1} = z'} (x+ wh_2)| \le \min\{L_G|z-z'|,C_\infty\}$. For $s \le \frac{1}{\kappa}$, we obtain  
\begin{eqnarray*}
    |\mu^{(\kappa)}(z) - \mu^{(\kappa)}(z')| &\le& h_2^{\frac{1}{\kappa}-\frac{1}{2}} \Big( \int |K(w)|^\kappa dw\Big)^{1/\kappa} \cdot \Big[ C_\infty \min\big\{1,\frac{L_G}{C_\infty}|z-z'|\big\} \Big]^{1/\kappa} \\
    &\le& h_2^{\frac{1}{\kappa}-\frac{1}{2}} \Big( \int |K(w)|^\kappa \Big)^{1/\kappa} C_\infty^{\frac{1}{\kappa}-s} L_G^s  |z-z'|^s.
\end{eqnarray*}
    
This shows that Assumption \ref{ass2} is satisfied with $R(\cdot) = \frac{1}{2} = C_R$ and $L_{\sF} = L_G$ and $\Delta(k) = L_G(k-1)^{-\alpha s}$. As before, it is easily seen that Assumption \ref{ass3} is satisfied.

We apply Corollary \ref{lemma_hoeffding_dependent_rates} with  $\bar F = \frac{|K|_{\infty}}{\sqrt{h_2}} =: C_{\bar F,n}$. For the grids $V_n = \{i n^{-3}:i=1,...,n^3\}$, $\sX_n = \{i n^{-3}: i \in \{-2 \lceil n^{3+\frac{1}{2s}}\rceil ,..., 2 \lceil n^{3+\frac{1}{2s}}\rceil\}\}$, we obtain
    \[
        \sqrt{nh_1h_2}\sup_{x\in\sX_n, v\in V_n}\big|\hat g_{n,h}(x,v) - \IE \hat g_{n,h}(x,v)\big| = \sup_{x\in\sX_n, v\in V_n}|\G_n(f_{x,v})| = O_p\big(\sqrt{\log(n)}\big).
    \]
    The discretization of \reff{example_densityestimation_eq1}  is rather standard and postponed to the Supplementary material \ref{suppA}, Section \ref{sec_examples_supp}.

\renewcommand{\arraystretch}{2}

\section{A general central limit theorem for locally stationary processes}
\label{sec_clt}

In this section, we provide a multivariate central limit theorem for $\G_n(f)$. To guarantee a regular behavior of the asymptotic variance, we need the following four assumptions. While Assumption \ref{ass_clt_expansion_ass2} asks for smoothness of $\bar f$ in the $L^2$-sense if $X_i$ is nonstationary, Assumption \ref{ass_clt_fcont} asks the function class $\sF$ to behave smoothly in the second argument. Assumption \ref{ass_clt_process} formulates what it means for a process to be \emph{locally stationary} (cf. \cite{dahlhaus2019}). The last Assumption \ref{ass_clt_dexplicit} mainly controls the behavior of the part $D_{f,n}(u)$ of $f\in \sF$ which does not depend on the observations.



\begin{assum}\label{ass_clt_expansion_ass2}
Let $\bar F$ be an envelope function of $\{\bar f: f \in \sF\}$, that is, $|\bar f(\cdot)| \le \bar F(\cdot)$ for all $f \in \sF$. There exists $\bar p \in (1,\infty]$ such that $\sup_{i,u}\|\bar F(Z_i,u)\|_{2\bar p} < \infty$, $\sup_{v,u}\|\bar F(\tilde Z_0(v),u)\|_{2\bar p} < \infty$. Furthermore, either
\begin{itemize}
    \item $X_i$ is stationary, or
    \item for all $c > 0$ and $f\in \sF$, 
    \begin{equation}
         \sup_{u,v\in[0,1]}\frac{1}{c^s}\IE\Big[\sup_{|a|_{L_{\sF},s} \le c}\big|\bar f(\tilde Z_0(v),u) - \bar f(\tilde Z_0(v)+a,u)\big|^2\Big] < \infty.\label{ass_clt_expansion_ass2_eq1}
    \end{equation}
    Additionally, \reff{ass_clt_expansion_ass2_eq1} also holds for $\bar F$ instead of $\bar f$.
\end{itemize}
\end{assum}

\begin{assum}\label{ass_clt_fcont}
    There exists some $\varsigma \in (0,1]$ such that for every $f\in \sF$,
    \[
        |\bar f(z,u_1) - \bar f(z,u_2)| \le |u_1 - u_2|^{\varsigma}\cdot \big(\bar R(z,u_1) + \bar R(z,u_2)\big),
    \]
    and $\sup_{u,v}\|\bar R(\tilde Z_0(v),u)\|_2 < \infty$.
\end{assum}

\begin{assum}\label{ass_clt_process}
    For each $u\in[0,1]$, there exists a process $\tilde X_i(u) = J(\sA_i,u)$, $i\in\Z$, where $J$ is a measurable function. Furthermore, there exists some $C_X > 0$, $\varsigma \in (0,1]$ such that for every $i\in \{1,...,n\}$, $u_1,u_2\in [0,1]$,
    \[
        \|X_i - \tilde X_i(\frac{i}{n})\|_{\frac{2sp}{p-1}} \le C_X n^{-\varsigma}, \quad\quad \|\tilde X_i(u_1) - \tilde X_i(u_2)\|_{\frac{2sp}{p-1}} \le C_X|u_1-u_2|^{\varsigma}.
    \]
    For $\tilde Z_i(u) = (\tilde X_{i}(u), \tilde X_{i-1}(u),...)$ it holds that $\sup_{v,u}\|R(\tilde Z_0(v),u)\|_{2p} < \infty$.
\end{assum}

For $f \in \sF$, let $D^{\infty}_{f,n} := \sup_{i=1,...,n}D_{f,n}(\frac{i}{n})$.

\begin{assum}\label{ass_clt_dexplicit}
    For all $f\in \sF$, the function $\frac{D_{f,n}(\cdot)}{D_{f,n}^{\infty}}$ has bounded variation uniformly in $n$, and
    \begin{equation}
       \sup_{n\in\N}\frac{1}{n}\sum_{i=1}^{n}D_{f,n}(\frac{i}{n})^2 < \infty, \quad\quad \frac{D_{f,n}^{\infty}}{\sqrt{n}} \to 0.\label{ass_clt_d_tech_cond1}
    \end{equation}
    One of the two following cases hold.
    \begin{enumerate}
        \item Case $\IK=1$ (global): For all $f,g\in \sF$,  $u \mapsto \IE[\IE[\bar f(\tilde Z_{j_1}(u),u)|Z_0]\cdot \IE[\bar g(\tilde Z_{j_2}(u),u)|Z_0]]$ has bounded variation for all $j_1,j_2 \in\N_0$ and the following limit exists:
        \[
            \Sigma_{fg}^{(1)} := \lim_{n\to\infty}\int_0^{1}D_{f,n}(u)D_{g,n}(u) \cdot \sum_{j\in\Z}\text{Cov}(\bar f(\tilde Z_0(u), u), \bar g(\tilde Z_j(u),u)) du.
        \]
        \item Case $\IK=2$ (local): There exists a sequence $h_n > 0$ and $v\in [0,1]$ such that $\mathrm{supp} D_{f,n}(\cdot) \subset [v-h_n,v+h_n]$. It holds that
        \[
            h_n \to 0,\quad\quad \sup_{n\in\N}(h_n^{1/2}\cdot D_{f,n}^{\infty}) < \infty.
        \]
        The following limit exists for all $f,g\in \sF$:
        \[
            \Sigma_{fg}^{(2)} := \lim_{n\to\infty}\int_0^{1}D_{f,n}(u) D_{g,n}(u) du \cdot \sum_{j\in\Z}\text{Cov}(\bar f(\tilde Z_0(v), v), \bar g(\tilde Z_j(v),v)).
        \]
    \end{enumerate}
\end{assum}

Assumption \ref{ass_clt_dexplicit} looks rather technical. The first part including \reff{ass_clt_d_tech_cond1} guarantees the right normalization of $D_{f,n}(\cdot)$. The second part ensures the convergence of the asymptotic variances $\Var(\G_n(f))$ and covariances $\Cov(\G_n(f),\G_n(g))$ with respect to the behavior of $D_{f,n}(\cdot)$.

        
        

Note that Assumptions \ref{ass_clt_fcont}, \ref{ass_clt_process} and \ref{ass_clt_dexplicit} are needed to allow for \emph{very different} function classes $\sF$. In many special cases, however, some of these assumptions are automatically fulfilled. For example,
\begin{itemize}
    \item If $\bar f(z,u) = \bar f(z)$ does not depend on $u$, Assumption \ref{ass_clt_fcont} is fulfilled.
    \item If $X_i$ is stationary, Assumption \ref{ass_clt_process} is fulfilled.
    \item If $D_{f,n}(u) = 1$, Assumption \ref{ass_clt_dexplicit} is fulfilled.
\end{itemize}

It is possible to show the following analogue of a multivariate central limit theorem as in Theorem \cite[Theorem 3.4]{empproc}. The proof is similar to the proof given in \cite[Theorem 3.4]{empproc};  the only difference appears in \cite[Lemma 7.8]{empproc} for which we supply the proof in the Supplementary Material, Section \ref{sec_clt_supp}, Lemma \ref{lemma_theorem_clt_mult} under the different Assumptions \ref{ass_clt_expansion_ass2}, \ref{ass_clt_fcont}, \ref{ass_clt_process} and \ref{ass_clt_dexplicit}.

\begin{thm}\label{theorem_clt_mult_martingale_paper}
    Suppose that $\sF$ satisfies Assumptions \ref{ass2}, \ref{ass_clt_expansion_ass2}, \ref{ass_clt_process}, \ref{ass_clt_fcont} and \ref{ass_clt_dexplicit}. Let $m\in\N$, $f_1,...,f_m \in \sF$ and $\Sigma^{\IK} = \Sigma^{(\IK)}_{f_k, f_l})_{k,l=1,...,m}$. Then,
    \[
        \frac{1}{\sqrt{n}}\sum_{i=1}^{n}\Big\{\begin{pmatrix}
            f_1(Z_i,\frac{i}{n})\\
            \vdots\\
            f_m(Z_i,\frac{i}{n})
        \end{pmatrix}- \E \begin{pmatrix}
            f_1(Z_i,\frac{i}{n})\\
            \vdots\\
            f_m(Z_i,\frac{i}{n})
        \end{pmatrix}\Big\} \dto N(0,\Sigma^{\IK}),
    \]
    where $\Sigma^{(\IK)}$ is from Assumption \ref{ass_clt_dexplicit}.
\end{thm}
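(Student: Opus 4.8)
The plan is to reduce the multivariate assertion to a one‑dimensional central limit theorem by the Cramér–Wold device and then prove the latter by martingale approximation, following the template of \cite[Theorem 3.4]{empproc}. Fix $a_1,\dots,a_m\in\R$ and put $f:=\sum_{l=1}^m a_l f_l$. All structural hypotheses are stable under finite linear combinations (the exponent $s$ and the sequence $L$ are unchanged, while envelopes, radii $R$ and the quantities $\bar F$, $\ID_n$, $\Delta$ get multiplied by $\sum_l|a_l|$ up to constants), so it suffices to show $\G_n(f)\dto N\big(0,\sum_{k,l}a_ka_l\,\Sigma^{(\IK)}_{f_k,f_l}\big)$. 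Write $\theta_{i,n}:=f(Z_i,\tfrac in)-\E f(Z_i,\tfrac in)$ and use the projections $\sP_j\,\cdot:=\E[\,\cdot\,|\sA_j]-\E[\,\cdot\,|\sA_{j-1}]$, so that $\theta_{i,n}=\sum_{j\le i}\sP_j\theta_{i,n}$ in $L^2$. Interchanging summation,
\[
    \G_n(f)=\frac{1}{\sqrt n}\sum_{i=1}^n\theta_{i,n}=\frac{1}{\sqrt n}\sum_{j\le n}M_{n,j},\qquad M_{n,j}:=\sum_{i=j\vee 1}^{n}\sP_j\theta_{i,n},
\]
and $(M_{n,j})_{j\le n}$ is a martingale difference array with respect to $(\sA_j)_j$. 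The quantitative heart of the decomposition is that, since $f(z,u)=D_{f,n}(u)\bar f(z,u)$ and $\sP_j$ only records the effect of replacing $\varepsilon_j$, the $(L,s,R,C)$‑structure of $\bar\sF^{(1)},\bar\sF^{(2)}$ together with Assumption~\ref{ass2} gives a bound of the form $\|\sP_j\theta_{i,n}\|_2\le c\,|D_{f,n}(\tfrac in)|\,\Delta(i-j)$, whence $\sum_{j\le i-q}\|\sP_j\theta_{i,n}\|_2\le c\,|D_{f,n}(\tfrac in)|\,\beta(q)$.

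Combining this tail bound with $\tfrac1n\sum_{i=1}^n|D_{f,n}(\tfrac in)|^2\le\ID_n^2$, one truncates the lag: replacing $M_{n,j}$ by $M_{n,j}^{(q)}:=\sum_{i=j\vee 1}^{(j+q)\wedge n}\sP_j\theta_{i,n}$ changes $\G_n(f)$ by a term of $L^2$‑norm $O(\beta(q))$, uniformly in $n$, and $\beta(q)\to0$ as $q\to\infty$. For fixed $q$ one then applies a martingale central limit theorem to $\tfrac1{\sqrt n}\sum_{j\le n}M_{n,j}^{(q)}$. The Lindeberg condition is routine: $M_{n,j}^{(q)}$ is a bounded-length sum of the $\sP_j\theta_{i,n}$, each of which has a uniformly bounded $2\bar p$-th moment thanks to the envelope conditions of Assumptions~\ref{ass_clt_expansion_ass2} and \ref{ass_clt_process} (with $\bar p>1$), and $D_{f,n}^{\infty}/\sqrt n\to0$ from Assumption~\ref{ass_clt_dexplicit}, so the truncated arrays are uniformly integrable after normalization. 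What remains is the conditional‑variance condition
\[
    \frac{1}{n}\sum_{j\le n}\E\big[(M_{n,j}^{(q)})^2\,\big|\,\sA_{j-1}\big]\pto \sigma_q^2,\qquad \sigma_q^2\xrightarrow[q\to\infty]{}\sigma^2:=\sum_{k,l}a_ka_l\,\Sigma^{(\IK)}_{f_k,f_l}.
\]

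Establishing this last display is the only place where the proof genuinely departs from \cite{empproc}, and it is carried out in Lemma~\ref{lemma_theorem_clt_mult}. The steps there are: (i) replace the conditional expectation by the unconditional one, the difference tending to $0$ in probability again via the functional dependence bounds; (ii) replace every occurrence of $Z_i$ by the stationary approximation $\tilde Z_i(\tfrac in)$, controlling the error through $\|X_i-\tilde X_i(\tfrac in)\|\le C_Xn^{-\varsigma}$ of Assumption~\ref{ass_clt_process} and the $L^2$‑modulus‑of‑continuity estimate \reff{ass_clt_expansion_ass2_eq1} of Assumption~\ref{ass_clt_expansion_ass2}; (iii) expand $\E[(M_{n,j}^{(q)})^2]$ into a double sum of covariances $\E[\sP_0\bar f(\tilde Z_{j_1}(u),u)\,\sP_0\bar f(\tilde Z_{j_2}(u),u)]$ weighted by products $D_{f,n}(\tfrac{i_1}n)D_{f,n}(\tfrac{i_2}n)$, and use the bounded‑variation hypotheses of Assumption~\ref{ass_clt_dexplicit} (together with the Hölder‑in‑$u$ continuity from Assumptions~\ref{ass_clt_fcont} and \ref{ass_clt_process}) to show that the resulting Riemann sums converge — to $\Sigma^{(1)}_{fg}$ in the global case $\IK=1$, and, exploiting additionally $\mathrm{supp}\,D_{f,n}\subset[v-h_n,v+h_n]$ with $h_n\to0$ and $\sup_n h_n^{1/2}D_{f,n}^{\infty}<\infty$, to $\Sigma^{(2)}_{fg}$ in the local case $\IK=2$; (iv) send $q\to\infty$ to recover the full covariance series $\sum_{j\in\Z}$, which identifies $\sigma^2$ with $\sum_{k,l}a_ka_l\,\Sigma^{(\IK)}_{f_k,f_l}$.

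The main obstacle is step (iii): one must interleave the martingale‑projection decomposition with the stationary‑approximation replacement while tracking the deterministic weights $D_{f,n}$, and the two cases $\IK\in\{1,2\}$ demand genuinely different normalizations — in the local case the weights grow like $h_n^{-1/2}$ on a shrinking window, so the Riemann‑sum convergence has to be coupled with the local‑stationarity continuity estimates evaluated at the single point $v$. Everything else — the Cramér–Wold reduction, the martingale decomposition, the $\beta(q)$‑truncation, and the Lindeberg verification — is routine given Assumption~\ref{ass2} and the moment conditions, and reproduces the argument of \cite[Theorem 3.4]{empproc} with only notational changes.
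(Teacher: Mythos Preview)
Your proposal is correct and follows essentially the same approach as the paper: the argument of \cite[Theorem~3.4]{empproc} (Cram\'er--Wold, martingale projection decomposition, $\beta(q)$-truncation, martingale CLT with Lindeberg and conditional-variance verification), with the only genuine modification being the replacement of \cite[Lemma~7.8]{empproc} by the present Lemma~\ref{lemma_theorem_clt_mult}. One small calibration: the paper's Lemma~\ref{lemma_theorem_clt_mult} does not itself carry out your steps (i)--(iv) but only supplies the two ingredients that differ under the nonsmooth assumptions---the projection bound $\|P_{i-j}f(Z_i,u)\|_2\le D_{f,n}(u)\Delta(j)$ and the $L^2$-continuity estimates \reff{lemma_theorem_clt_mult_1}--\reff{lemma_theorem_clt_mult_2}---while the remainder of the variance convergence (Riemann-sum argument for $\IK\in\{1,2\}$) is inherited verbatim from \cite{empproc}.
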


\section{Maximal inequalities and asymptotic tightness under functional dependence} \label{sec_max_ineq_tight}

We now provide an approach for empirical process theory if the class $\sF$ consists of nonsmooth functions. Our  approach is based on the decomposition
\[
    \G_n(f) = \G_n^{(1)}(f) + \G_n^{(2)}(f)
\]
into a martingale
\[
    \G_n^{(1)}(f) = \frac{1}{\sqrt{n}}\sum_{i=1}^{n}\big\{f(Z_i,\frac{i}{n}) - \IE[f(Z_i, \frac{i}{n})|Z_{i-1}]\big\}
\]
and a process
\[
    \G_n^{(2)}(f) = \frac{1}{\sqrt{n}}\sum_{i=1}^{n}\big\{\IE[f(Z_i, \frac{i}{n})|Z_{i-1}]-\IE f(Z_i, \frac{i}{n})\big\}
\]
which is smooth with respect to the arguments $Z_i$ if Assumption \ref{ass2} is fulfilled. The second part $\G_n^{(2)}$ can then be controlled in a similar way as done in \cite[Section 4]{empproc}, therefore this term is only discussed in the Supplementary Material. The term $\G_n^{(1)}$ is dealt with by using a Bernstein-type inequality for martingales. Observe that the conditional variance of $\G_n^{(1)}(f)$ is bounded from above by
\[
    R_n^2(f) := \frac{1}{n}\sum_{i=1}^{n}\E[f(Z_i,\frac{i}{n})^2|Z_{i-1}].
\]
The first step is now to bound $R_n^2(f)$ uniformly over $f\in\sF$.



\subsection{Maximal inequalities}
\label{sec_maximal}


Based on $\beta(\cdot)$ from \reff{definition_betaq}, we define
\[
    q^{*}(x) := \min\{q\in\N:\beta(q) \le q\cdot x\}.
\]
Set $D_{n}^{\infty}(u) := \sup_{f\in \sF}|D_{f,n}(u)|$. For $\nu \ge 2$, choose $\ID_{\nu, n}^{\infty}$ such that
\begin{equation}
   \Big(\frac{1}{n}\sum_{i=1}^{n}D_{n}^{\infty}(\frac{i}{n})^{\nu}\Big)^{1/\nu} \le \ID_{\nu,n}^{\infty}.\label{definition_dinf}
\end{equation}
Put $\ID_n^{\infty} = \ID_{2,n}^{\infty}$. Recall that $H = H(|\sF|) = 1 \vee \log |\sF|$ as in \reff{H_fix}. 
For $\delta > 0$, let
\[
    r(\delta) := \max\{r > 0: q^{*}(r)r \le \delta\}.
\]
The values for $q^\ast(\cdot)$ and $r$ under polynomial and exponential decaying $\Delta(\cdot)$ are given in the Table \ref{table_r_values} below.

 \begin{table}[h!]
    \centering
        \begin{tabular}{l|l|l}
         & \multicolumn{2}{c}{$\Delta(j)$ }  \\
         &  $C j^{-\alpha}$, $\alpha > 1$         & $C \rho^j$, $\rho \in (0,1)$    \\ 
         \hline
         \hline
        $q^*(x)$ & $\max\{x^{-\frac{1}{\alpha}},1\}$ & $\max\{\log(x^{-1}),1\}$ \\ \hline
        $r(\delta)$ & $\min\{\delta^{\frac{\alpha}{\alpha-1}},\delta\}$ & $\min\{\frac{\delta}{\log(\delta^{-1})},\delta\}$
        \end{tabular}
        
        \caption{Equivalent expressions of $q^{*}(\cdot)$ and $r(\cdot)$ taken from \cite[Lemma 7.10]{empproc}. We omitted the lower and upper bound constants which are only depending on $C, \rho, \alpha$.}
        \label{table_r_values}
    \end{table}

We have the following theorem.

\begin{thm}[Controlling the variance]\label{lemma_hoeffding_dependent}
	Let $\sF$ satisfy $|\sF| < \infty$ and Assumption \ref{ass2}. Then there exists some universal constant $c > 0$ such that the following holds. If $\sup_{f\in\sF}\|f\|_{\infty}\le M$ and $\sup_{f\in\sF}V_n(f)\le \sigma$, then
    \begin{equation}
    \E \max_{f\in \sF}\Big|R_n^2(f) - \E R_n^2(f)\Big| \le c \cdot \min_{q\in \{1,...,n\}} \Big[\ID_n r(\frac{\sigma}{\ID_n}) \sigma + C_{\Delta}(\ID_n^{\infty})^2 \beta(q) +\frac{q M^2 H}{n}\Big].\label{proposition_rosenthal_bound_mom1_result1}
\end{equation}
Furthermore,
\begin{equation}
    \E \max_{f\in \sF}\Big|R_n^2(f) - \E R_n^2(f)\Big| \le 2c\cdot \Big[\ID_n r(\frac{\sigma}{\ID_n}) \sigma +  q^{*}\big(\frac{M^2H}{n(\ID_n^{\infty})^2C_{\Delta}}\big)\frac{M^2H}{n}\Big].\label{proposition_rosenthal_bound_mom1_result2}
\end{equation}
\end{thm}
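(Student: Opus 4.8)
The plan is to bound the centered process $R_n^2(f) - \mathbb{E}R_n^2(f)$ uniformly over the finite class $\sF$ by exploiting the smoothness structure encoded in Assumption~\ref{ass2}, which says that $g^{(2)}(z,u) := \mathbb{E}[\bar f(Z_i,u)^2\mid Z_{i-1}=z]^{1/2}$ lies in an $(L,s,R,C)$-class. Writing $\mathbb{E}[f(Z_i,\tfrac{i}{n})^2\mid Z_{i-1}] = D_{f,n}(\tfrac{i}{n})^2\, g^{(2)}(Z_{i-1},\tfrac{i}{n})^2$, the quantity $R_n^2(f)$ is a (weighted) additive functional of the process $(Z_{i-1})$ applied to the smooth function $g^{(2)}(\cdot)^2$. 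First I would reduce the problem to controlling a supremum of such additive functionals; this is exactly the type of object handled in \cite[Section~4]{empproc}, so the strategy is to mimic that argument but track the extra weights $D_{f,n}$ via the quantities $\mathbb{D}_n$, $\mathbb{D}_n^\infty$ and $C_\Delta$.

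The core step is a blocking / $m$-dependent approximation argument. I would fix a block length $q\in\{1,\dots,n\}$ and replace each $g^{(2)}(Z_{i-1},\tfrac{i}{n})$ by its truncated-past version $g^{(2)}(Z_{i-1}^{(q)},\tfrac{i}{n})$ where $Z_{i-1}^{(q)}$ retains only the innovations $\varepsilon_{i-1},\dots,\varepsilon_{i-q}$. The $(L,s)$-Lipschitz property together with the moment bounds $\sup_{i,u}\|R(Z_{i-1},u)\|_{2p}\le C_R$ and $\sup_{i,j}\|X_{ij}\|_{2sp/(p-1)}\le C_X$, combined with Hölder's inequality at the conjugate exponents $p$ and $p/(p-1)$, give that the approximation error is controlled by $\sum_{j\ge q}L_j(\delta^X_{2sp/(p-1)}(\cdot))^s$, i.e. by $\beta(q)/(2dC_R)$ up to constants; after squaring and weighting by $D_{f,n}^2$ this produces the term $C_\Delta(\mathbb{D}_n^\infty)^2\beta(q)$. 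The remaining truncated part is $q$-dependent, so its fluctuations can be bounded by a Bernstein/Rosenthal-type maximal inequality over the finite class, with the variance proxy controlled by $\sigma$ through $V_n$ and the uniform bound $M$ on $\|f\|_\infty$; splitting into $\lceil n/q\rceil$ independent-ish blocks yields the $qM^2H/n$ term and the $\mathbb{D}_n\, r(\sigma/\mathbb{D}_n)\,\sigma$ term, where the function $r(\cdot)$ absorbs the trade-off between the variance level $\sigma$ and the required block length via $q^*$. Taking the minimum over $q$ gives \reff{proposition_rosenthal_bound_mom1_result1}.

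For the second bound \reff{proposition_rosenthal_bound_mom1_result2} I would simply optimize the right-hand side of \reff{proposition_rosenthal_bound_mom1_result1} in $q$: balancing $C_\Delta(\mathbb{D}_n^\infty)^2\beta(q)$ against $qM^2H/n$ leads, by the very definition $q^*(x)=\min\{q:\beta(q)\le qx\}$, to the choice $q = q^*\big(M^2H/(n(\mathbb{D}_n^\infty)^2C_\Delta)\big)$, at which point both terms are comparable and collapse into $q^*(\cdot)\,M^2H/n$; the factor $2c$ accommodates the two merged terms.

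The main obstacle I anticipate is the careful bookkeeping in the blocking step: one must simultaneously (i) keep the martingale/Bernstein variance proxy expressed through $V_n$ rather than a cruder $\|\cdot\|_{2,n}$ norm — this is what makes $r(\sigma/\mathbb{D}_n)\sigma$ appear instead of a worse power of $\sigma$ — and (ii) correctly propagate the weights $D_{f,n}$ through both the approximation error (giving $(\mathbb{D}_n^\infty)^2$) and the variance term (giving $\mathbb{D}_n$), which requires using the two different normalizations in \reff{definition_dinf} in the right places. The truncation/Hölder estimate itself is routine given Assumption~\ref{ass2}, and the maximal inequality for the $q$-dependent part is standard, so the delicate point is really the choice of which norm and which $\mathbb{D}$-quantity enters each error term so that the stated clean form emerges.
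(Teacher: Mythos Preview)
Your overall strategy is correct and close to the paper's: write $W_i(f)=\IE[f(Z_i,\tfrac{i}{n})^2\mid Z_{i-1}]$, approximate by a finite-past version, control the approximation error by $C_\Delta(\ID_n^{\infty})^2\beta(q)$, and handle the remaining $q$-dependent piece via blocking plus a Bernstein-type maximal inequality for independent summands. The optimisation in $q$ giving \eqref{proposition_rosenthal_bound_mom1_result2} is also right, though you should add the easy case distinction $q^{*}>n$ (there the trivial bound $M^2$ already dominates $q^{*}M^2H/n$).

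Two differences from the paper are worth noting. First, the paper does not use a single truncation at level $q$ but a dyadic multiresolution scheme $\tau_l=2^l$, $l=0,\dots,L$, writing $W_i=(W_i-W_{i,q})+\sum_l(W_{i,\tau_l}-W_{i,\tau_{l-1}})+W_{i,1}$ and applying the independent-blocks Bernstein lemma at \emph{each} scale. This yields the variance term $\sup_f\|f\|_{2,n}V_n(f)$ rather than $\sup_f\|f\|_{2,n}^2$; since $\|f\|_{2,n}\le V_n(f)$ both are bounded by $\ID_n r(\sigma/\ID_n)\sigma$, so your single-level approach is in fact sufficient (and slightly simpler) for this $L^1$ bound. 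The dyadic route is the standard device from \cite{Wu13,wuzhang2017} and is what one would need for higher-moment versions.

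Second, your account of where the factor $\ID_n r(\sigma/\ID_n)\sigma$ comes from is not quite right. It does not emerge from the blocking trade-off. After Bernstein, the variance proxy is simply $\sup_f\|f\|_{2,n}^2$; the point is that the constraint $V_n(f)\le\sigma$ \emph{forces} $\|f\|_{2,n}\le \ID_n r(\sigma/\ID_n)$ (this is the inequality \eqref{proposition_rosenthal_bound_implication3_bound2norm} in the paper, obtained by comparing $\|f\|_{2,n}$ with the tail sum in the definition of $V_n$), and then $\|f\|_{2,n}^2\le \ID_n r(\sigma/\ID_n)\cdot\sigma$ because $r(x)\le x$. This is the step you flagged as delicate, and it is, but the mechanism is a static implication from $V_n\le\sigma$ to a bound on $\|\cdot\|_{2,n}$, not a dynamic balance inside the blocking. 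Once you make this explicit your argument goes through.
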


Theorem \ref{lemma_hoeffding_dependent} in conjunction with \cite[Theorem 4.1]{empproc} can be used to provide uniform convergence rates for $\G_n(f)$.

\begin{cor}[Uniform convergence rates]\label{lemma_hoeffding_dependent_rates}
    Suppose that $\sF$ satisfies $|\sF| < \infty$,  Assumption \ref{ass2} for some $\nu \ge 2$, and Assumption \ref{ass3}. Let $\bar F := \sup_{f\in\sF}\bar f$ and assume that for some $\nu_2 \in [2,\infty]$,
    \[
        C_{\bar F,n} := \sup_{i,u}\|\bar F(Z_i,u)\|_{\nu_2} < \infty.
    \]
    If
    \begin{equation}
        \sup_{n\in\N}\sup_{f\in\sF}V_n(f) < \infty, \quad\quad \sup_{n\in\N}\frac{\ID_{\nu_2,n}^{\infty}}{\ID_{n}^{\infty}} < \infty,\quad\quad\quad \sup_{n\in\N}\frac{C_{\bar F,n}^2 H}{n^{1-\frac{2}{\nu_2}}r(\frac{\sigma}{\ID_n^{\infty}})^2} < \infty,\label{lemma_hoeffding_dependent_rates_cond1}
    \end{equation}
    then
    \[
        \max_{f\in\sF}|\G_n(f)| = O_p(\sqrt{H}).
    \]
\end{cor}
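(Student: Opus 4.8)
The plan is to bound $\G_n(f)$ through the martingale decomposition $\G_n(f) = \G_n^{(1)}(f) + \G_n^{(2)}(f)$ from the start of Section~\ref{sec_max_ineq_tight}, showing $\max_{f\in\sF}|\G_n^{(\kappa)}(f)| = O_p(\sqrt{H})$ for $\kappa\in\{1,2\}$ separately; the triangle inequality then gives the claim. For the ``smooth'' part $\G_n^{(2)}$, note that $\IE[f(Z_i,\tfrac{i}{n})\mid Z_{i-1}] = D_{f,n}(\tfrac{i}{n})\cdot\IE[\bar f(Z_i,\tfrac{i}{n})\mid Z_{i-1}]$ and that, by Assumption~\ref{ass2}, $\bar\sF^{(1)}$ is an $(L,s,R,C)$-class, so these conditional means are H\"older-type functions of $Z_{i-1}$ whose dependence is governed by the weights $\Delta(k)$. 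Thus $\G_n^{(2)}$ is an empirical process over a weighted H\"older-continuous class of exactly the kind treated in \cite[Section~4]{empproc}, and I would invoke \cite[Theorem~4.1]{empproc}: the finiteness of $\sup_n\sup_f V_n(f)$, the envelope moment bound $C_{\bar F,n}$, Assumption~\ref{ass3} and the normalizations in \reff{lemma_hoeffding_dependent_rates_cond1} are precisely its hypotheses for a finite class, and it returns $\max_{f\in\sF}|\G_n^{(2)}(f)| = O_p(\sqrt{H})$.

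For the martingale part $\G_n^{(1)}$, whose conditional variance process is dominated by $R_n^2(f)$, the crucial step is the a~priori bound $\max_{f\in\sF}R_n^2(f) = O_p(1)$. For its mean, $\IE R_n^2(f) = \tfrac{1}{n}\sum_{i=1}^{n}\IE[f(Z_i,\tfrac{i}{n})^2] = \|f\|_{2,n}^2 \le V_n(f)^2 \le \sigma^2$ with $\sigma := \sup_n\sup_f V_n(f) < \infty$ by the first condition of \reff{lemma_hoeffding_dependent_rates_cond1}. For the fluctuation I would apply Theorem~\ref{lemma_hoeffding_dependent}, inequality \reff{proposition_rosenthal_bound_mom1_result2} (or \reff{proposition_rosenthal_bound_mom1_result1} after optimizing over $q$): the leading term $\ID_n r(\tfrac{\sigma}{\ID_n})\sigma$ is $O(1)$, and the remainder $q^{\ast}\!\big(\tfrac{M^2 H}{n(\ID_n^{\infty})^2 C_{\Delta}}\big)\tfrac{M^2 H}{n}$ is $O(1)$ by the third condition of \reff{lemma_hoeffding_dependent_rates_cond1}, once one uses $\sup_n \ID_{\nu_2,n}^{\infty}/\ID_n^{\infty} < \infty$ to trade the sup-norm constant $M$ for an $L^{\nu_2}$-sized quantity built from $C_{\bar F,n}$ and $\ID_{\nu_2,n}^{\infty}$. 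When $\nu_2 = \infty$ one has $M \lesssim C_{\bar F,n}\ID_n^{\infty}$ and this is immediate; when $\nu_2 < \infty$ the functions need not be bounded, so the step is carried out on the truncated family $f\,\Ii_{\{|f(\cdot)| \le \tau_n\}}$, and the deleted part is absorbed in the next step.

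With $\max_f R_n^2(f) = O_p(1)$ in hand, I would conclude by a maximal Bernstein/Freedman inequality for the martingales $\G_n^{(1)}(f)$ together with a union bound over the $|\sF|$ members of $\sF$: on an event of probability arbitrarily close to $1$ where $\max_f R_n^2(f) \le B$, this yields $\max_f|\G_n^{(1)}(f)| \lesssim \sqrt{B H} + \tfrac{\tau_n H}{\sqrt{n}}$, so choosing $\tau_n = O(\sqrt{n/H})$ makes the second term $O(\sqrt{H})$; meanwhile the removed tail $\tfrac{1}{\sqrt{n}}\sum_i\{f\,\Ii_{\{|f| > \tau_n\}} - \IE[f\,\Ii_{\{|f| > \tau_n\}}]\}$ is negligible in probability by Markov's inequality, the $\nu_2$-th moment bound $C_{\bar F,n}$ on the envelope and \reff{definition_dinf}. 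The admissible range of $\tau_n$ is nonempty precisely under the third condition of \reff{lemma_hoeffding_dependent_rates_cond1}, which is why that condition is imposed. Combining the two parts gives $\max_f|\G_n(f)| = O_p(\sqrt{H})$.

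The step I expect to be the main obstacle is the coupled choice of the truncation level $\tau_n$: it must simultaneously keep the Bernstein deviation term of $\G_n^{(1)}$ at order $\sqrt{H}$, keep the remainder in Theorem~\ref{lemma_hoeffding_dependent} (which itself depends on $\tau_n$ through $M$ and $q^{\ast}$) at order $O(1)$, and make the truncation bias negligible, so that checking \reff{lemma_hoeffding_dependent_rates_cond1} is exactly strong enough for all three --- while correctly propagating the several normalizing quantities $\ID_n$, $\ID_n^{\infty}$, $\ID_{\nu_2,n}^{\infty}$ and $r(\sigma/\ID_n^{\infty})$ through the estimates --- is the delicate bookkeeping. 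In the bounded case $\nu_2 = \infty$ no truncation is needed and one simply applies Theorem~\ref{lemma_hoeffding_dependent} and the martingale Bernstein inequality directly.
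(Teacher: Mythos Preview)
Your proposal is correct and follows essentially the same route as the paper: the decomposition $\G_n = \G_n^{(1)} + \G_n^{(2)}$, treatment of $\G_n^{(2)}$ via \cite[Theorem~4.1]{empproc}, control of $\max_f R_n^2(f)$ through Theorem~\ref{lemma_hoeffding_dependent}, a martingale Bernstein inequality for $\G_n^{(1)}$ (the paper's Lemma~\ref{lemma_maximal_inequality_martingales_part1}), and truncation of the envelope. The paper resolves your ``main obstacle'' by introducing a free parameter $Q\ge 1$ and taking the explicit truncation level $M_n = \sqrt{n/H}\, r(\sigma Q^{1/2}/\ID_n^{\infty})\,\ID_n^{\infty}$, then bounding $\IP(\max_f|\G_n(f)| > Q\sqrt{H})$ by terms that vanish as $Q\to\infty$; the submultiplicativity $q^{*}(C_\beta x_1 x_2)\le q^{*}(x_1)q^{*}(x_2)$ from Assumption~\ref{ass3} is what converts the $q^{*}$-remainder in Theorem~\ref{lemma_hoeffding_dependent} into the form controlled by the third condition in \reff{lemma_hoeffding_dependent_rates_cond1}.
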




\subsection{Asymptotic tightness}
\label{sec_asymptotic_tightness}

In this section, we extend the maximal inequality from Theorem \ref{lemma_hoeffding_dependent} to arbitrary (infinite) classes $\sF$. We need an additional submultiplicativity assumption on $\beta(\cdot)$ from \reff{definition_betaq}.

\begin{assum}\label{ass3}
    There exists a constant $C_{\beta} > 0$ such that for each $q_1,q_2 \in\N$,
    \[
        \beta(q_1 q_2) \le C_{\beta}\cdot \beta(q_1) \beta(q_2).
    \]
\end{assum}

It is easily seen that Assumption \ref{ass3} is fulfilled if $\Delta(k)$ follows a polynomial ($\Delta(k) = c k^{-\alpha}$ for $c > 0, \alpha > 1$) or exponential decay ($\Delta(k) = c \rho^{k}$ for $c > 0$, $\rho \in (0,1)$), cf. \cite[Lemma 7.9]{empproc}. It is generally not possible to show Assumption \ref{ass3} if $\Delta(k)$ contains a factor of the form $\frac{1}{\log(k)}$.

Recall $H(k) = 1 \vee \log(k)$. For $n\in\N$, $\delta > 0$, define
\begin{equation}
    m(n,\delta,k) := r(\frac{\delta}{\ID_n})\cdot \frac{\ID_n^{\infty}n^{1/2}}{H(k)^{1/2}}.\label{lemma_chaining_definition_m}
\end{equation}

Here, $m(n,\delta,k)$ represents the threshold for rare events in the chaining procedure. We have the following maximal inequality.

\begin{thm}\label{thm_martingale_equicont}
	Let $\sF$ satisfy Assumption \ref{ass2} and \ref{ass3}, and $F$ be some envelope function of $\sF$. Furthermore, let $\sigma > 0$ and suppose that $\sup_{f\in \sF} V_n(f) \le \sigma$. Let $\psi$ be defined as in \reff{definition_psi_factor}.
     Then there exists a universal constant $c > 0$ such that for each $\eta > 0$,
    \begin{eqnarray}
         &&\IP\Big(\sup_{f\in \sF}\big|\G_n^{(1)}(f)\big| > \eta\Big)\nonumber\\
         &\le& \frac{1}{\eta}\Big[c\Big(1 + \frac{\ID_n^{\infty}}{\ID_n} + \frac{\ID_n}{\ID_n^{\infty}}\Big)\cdot \int_0^{\sigma} \psi(\varepsilon) \sqrt{1 \vee \IH\big(\eps,\sF,V\big)} \, \mathrm{d}\eps + \sqrt{n}\big\|F\Ii_{\{F > \frac{1}{4}m(n,\sigma,\N(\frac{\sigma}{2},\sF,V_n))\}}\big\|_1\Big]\nonumber\\
         &&\quad\quad +c\Big(1 + q^{*}\big(C_{\Delta}^{-1}C_{\beta}^{-2}\big)\Big(\frac{\ID_n^{\infty}}{\ID_n}\Big)^2\Big)\int_0^{\sigma}\frac{1}{\varepsilon \psi(\varepsilon)^2}d\varepsilon.\label{thm_martingale_equicont_res1}
    \end{eqnarray}
\end{thm}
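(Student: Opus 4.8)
The plan is a bracketing chaining argument for the martingale $\G_n^{(1)}$, with a Bernstein/Freedman inequality for martingale differences as the per-scale building block and Theorem \ref{lemma_hoeffding_dependent} supplying high-probability control of the predictable quadratic variation $R_n^2$ over the finite families of bracket differences appearing at each scale. Fix $\sigma$ with $\sup_{f\in\sF}V_n(f)\le\sigma$ and set $\sigma_j=\sigma 2^{-j}$, $N_j=\N(\sigma_j,\sF,V_n)$, with $\pi_j f$ a measurable representative of the $\sigma_j$-bracket containing $f$. By the triangle inequality $V_n(f-\pi_j f)\le\sigma_j$ and $V_n(\pi_j f-\pi_{j-1}f)\le 3\sigma_{j-1}$, and since $V_n(\cdot)\ge\|\cdot\|_{2,n}$ the same bounds hold for $\|\cdot\|_{2,n}$, so $\E R_n^2(\pi_j f-\pi_{j-1}f)=\|\pi_jf-\pi_{j-1}f\|_{2,n}^2\le 9\sigma_{j-1}^2$. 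Telescoping $\G_n^{(1)}(f)=\G_n^{(1)}(\pi_0f)+\sum_{j\ge1}\G_n^{(1)}(\pi_jf-\pi_{j-1}f)$ (the tail converges in $L^2$ since $\G_n^{(1)}$ is $\|\cdot\|_{2,n}$-Lipschitz in the $L^2$ sense and $\|f-\pi_jf\|_{2,n}\to0$), it suffices to bound $\max|\G_n^{(1)}(\pi_0f)|$ over $N_0$ functions and, for each $j\ge1$, $\max|\G_n^{(1)}(\pi_jf-\pi_{j-1}f)|$ over at most $N_j^2$ functions.

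\textbf{Truncation and per-scale deviation.} For a bracket difference $g$, split each martingale increment $g(Z_i,i/n)-\E[g(Z_i,i/n)\mid Z_{i-1}]$ at the level $m_j:=\tfrac14 m(n,\sigma_j,N_j)$ from \reff{lemma_chaining_definition_m}. The part where $|g|>m_j$ is dominated by the envelope $F$ and contributes at most $\sqrt n\,\|F\Ii_{\{F>m_j\}}\|_1$ in $L^1$, uniformly in $g$; since $m_j$ varies with the scale only through $N_j$ and $r(\cdot)$, the largest such term is the $j=0$ one (a geometric remainder being absorbed into the universal constant), giving exactly the envelope term $\sqrt n\|F\Ii_{\{F>\frac14 m(n,\sigma,\N(\sigma/2,\sF,V_n))\}}\|_1$ of \reff{thm_martingale_equicont_res1}. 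The remaining bounded martingale has increments $\le 2m_j$ and predictable quadratic variation $\le R_n^2(g)$. Apply Theorem \ref{lemma_hoeffding_dependent}, bound \reff{proposition_rosenthal_bound_mom1_result2}, with $M=2m_j$, with the $\sigma$ there taken to be $3\sigma_{j-1}$, and $H$ taken to be $\log N_j^2\asymp\IH(\sigma_j,\sF,V_n)$: this produces a deterministic $v_j$ with $\IP(\max_g R_n^2(g)>v_j)$ small and $v_j\lesssim\sigma_j^2+\ID_n r(\sigma_j/\ID_n)\sigma_j+q^{*}(\cdots)\,m_j^2\IH(\sigma_j)/n$. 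The submultiplicativity Assumption \ref{ass3} is what lets these per-scale bounds be combined, and the residual constant $q^{*}(C_\Delta^{-1}C_\beta^{-2})$ with prefactor $(\ID_n^\infty/\ID_n)^2$ is its price; summed over $j$ (weighted so as to be summable) this yields the term $c(1+q^{*}(C_\Delta^{-1}C_\beta^{-2})(\ID_n^\infty/\ID_n)^2)\int_0^\sigma\frac{d\varepsilon}{\varepsilon\psi(\varepsilon)^2}$, which is not divided by $\eta$ because it collects the probabilities of the exceptional events (bounded directly by Markov and \reff{proposition_rosenthal_bound_mom1_result2}, not a size bound on $\G_n^{(1)}$). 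On the good event, Freedman's inequality gives $\IP(|\G_n^{(1)}(g_{\le m_j})|>t,\ R_n^2(g)\le v_j)\le 2\exp\!\big(-\tfrac{t^2/2}{v_j+2m_jt/(3\sqrt n)}\big)$; choosing $t_j\asymp(1+\ID_n^\infty/\ID_n+\ID_n/\ID_n^\infty)\,\sigma_j\,\psi(\sigma_j)\sqrt{1\vee\IH(\sigma_j)}$ and using the definition \reff{lemma_chaining_definition_m} of $m(n,\cdot,\cdot)$, both the linear correction $m_jt_j/\sqrt n$ and the $q^{*}$-term in $v_j$ are balanced so that $\exp(-t_j^2/(2v_j))\le w_j\,e^{-\IH(\sigma_j)}$ with $\sum_j w_j<\infty$; the $\log\log$ factor in $\psi$ is precisely what makes this summable without making the entropy integral diverge. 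The union bound over the $N_j^2$ functions at scale $j$ then survives.

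\textbf{Assembly and main obstacle.} Summing the thresholds, $\sum_{j\ge0}t_j\lesssim(1+\ID_n^\infty/\ID_n+\ID_n/\ID_n^\infty)\int_0^\sigma\psi(\varepsilon)\sqrt{1\vee\IH(\varepsilon,\sF,V_n)}\,d\varepsilon$, using monotonicity of the entropy to compare $\sigma_j\psi(\sigma_j)\sqrt{\IH(\sigma_j)}$ with $\int_{\sigma_{j+1}}^{\sigma_j}\psi\sqrt{\IH}$; adding the envelope-truncation term, applying Markov to the sum of the typical parts and dividing by $\eta$, while the exceptional events have been absorbed into the unnormalized $\int\frac{d\varepsilon}{\varepsilon\psi^2}$ term, yields \reff{thm_martingale_equicont_res1}. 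The delicate point — and the expected main obstacle — is the simultaneous calibration in the per-scale step: the truncation level $m_j$, the variance proxy $v_j$ and the deviation threshold $t_j$ must be chosen so that (i) Freedman's exponent beats $\IH(\sigma_j)$ with a summable surplus, (ii) the bad-tail linear correction in Freedman is dominated, and (iii) the $r(\cdot)$, $q^{*}(\cdot)$, $\ID_n$, $\ID_n^\infty$, $C_\Delta$, $C_\beta$ dependencies land in exactly the stated form. This is an adaptation of the Ossiander--Bass bracketing chaining lemma to the dependent, locally stationary, martingale setting; the companion term $\G_n^{(2)}$ is handled separately and does not enter this theorem.
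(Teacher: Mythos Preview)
Your overall architecture is right and matches the paper: chaining in $V_n$, Freedman/Pinelis on the good event where the predictable variance $R_n^2$ is controlled, Theorem \ref{lemma_hoeffding_dependent} (via the submultiplicativity of Assumption \ref{ass3}) to bound the probability of the bad event scale by scale, and the identification of the $\int_0^\sigma \varepsilon^{-1}\psi(\varepsilon)^{-2}\,d\varepsilon$ term as the sum of exceptional probabilities (hence not divided by $\eta$). The per-scale calibration you sketch is also essentially what the paper packages into its Compatibility Lemma \ref{lemma_chaining2}.

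There is, however, a real gap in your truncation step. You write that the envelope contribution at scale $j$ is $\sqrt{n}\,\|F\Ii_{\{F>m_j\}}\|_1$ and that ``the largest such term is the $j=0$ one''. This is backwards: by \reff{lemma_chaining_definition_m}, $m_j=\tfrac14 r(\sigma_j/\ID_n)\ID_n^{\infty}n^{1/2}H(N_j)^{-1/2}$ is \emph{decreasing} in $j$ (both $r(\sigma_j/\ID_n)$ shrinks and $H(N_j)$ grows), so the envelope pieces $\|F\Ii_{\{F>m_j\}}\|_1$ \emph{increase} with $j$ and do not sum. You therefore cannot control the truncation residuals at fine scales by the single envelope term that appears in \reff{thm_martingale_equicont_res1}, and the infinite telescope does not close.

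The paper resolves this differently, and in a way that contradicts your last sentence: $\G_n^{(2)}$ \emph{does} enter the proof of this theorem. One truncates \emph{once} at the top level $M_n=\tfrac12 m_0$, producing the single envelope term, and then works inside the bounded class $\sF(M_n)$. The chaining is stopped at a finite level $\tau$ (defined by $\delta_\tau\le I(\sigma)/\sqrt n$). The peaky residuals $\varphi_{m_j}^{\vee}(\Delta_j f)$ arising at each further scale are \emph{not} bounded via the envelope; instead one uses, for $|f|\le g$,
\[
|\G_n^{(1)}(f)|\le |\G_n^{(1)}(g)|+2|\G_n^{(2)}(g)|+2\sqrt{n}\,\|g\|_{1,n},
\]
with $g=\Delta_j f$ (see the Remark after Theorem \ref{thm_martingale_equicont}). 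This trades the random $R_n^2(g)$ in the Nishiyama-type bound for the deterministic $\|g\|_{2,n}^2$ at the cost of a $\G_n^{(2)}$ term, which is then controlled by the smooth chaining of \cite[Theorem 4.4]{empproc}; the deterministic pieces $\sqrt{n}\|\Delta_j f\Ii_{\{\Delta_j f>m_j\}}\|_{1,n}\le \sqrt{n}\,\sigma_j^2/m_j$ sum to the entropy integral. Without this device (or an equivalent one replacing your envelope bounds at scales $j\ge 1$ by bracket-based bounds $\|\Delta_j f\|_{2,n}^2/m_j$), your argument as written does not go through.
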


\begin{rem}
    Let $m > 0$. The chaining procedure found in \cite{nishiyama2000weak} for martingales uses the fact that for functions $f,g$ with $|f|\le g$ and $g(\cdot) > m$,
    \[
        |\G_n^{(1)}(f)| \le |\G_n^{(1)}(g)| + 2\sqrt{n}\cdot \frac{1}{n}\sum_{i=1}^{n}\IE[g(Z_i,\frac{i}{n})|Z_{i-1}] \le |\G_n^{(1)}(g)| + 2\sqrt{n}\frac{R_n^2(g)}{m}.
    \]
    Afterwards, bounds for the conditional variance $R_n^2(g)$ are applied. In our case, these bounds are not sharp enough. We therefore employ the inequality
    \[
        |\G_n^{(1)}(f)| \le |\G_n^{(1)}(g)| + 2|\G_n^{(2)}(g)| + 2\sqrt{n}\frac{\|g\|_{2,n}^2}{m}
    \]
    and are forced to use the ``smooth'' chaining technique applied on $\G_n^{(2)}(g)$ as in \cite[Theorem 4.4]{empproc} and on $R_n^2(g)$ from Theorem \ref{lemma_hoeffding_dependent}.
\end{rem}

We now obtain asymptotic equicontinuity of the process $\G_n(f)$ by using Theorem \ref{thm_martingale_equicont} for $\G_n^{(1)}$ and \cite[Theorem 4.4]{empproc} for $\G_n^{(2)}$. 

\begin{cor}\label{cor_martingale_equicont}
    Let $\sF$ satisfy the Assumptions \ref{ass2}, \ref{ass3}, \ref{ass_clt_process}, \ref{ass_clt_fcont} and \ref{ass_clt_expansion_ass2}. For $\psi$ from \reff{definition_psi_factor}, suppose that
    \begin{equation}
        \sup_{n\in\N}\int_0^{\infty}\psi(\varepsilon)\sqrt{1 \vee \IH(\varepsilon,\sF,V)} d\varepsilon < \infty.\label{cor_martingale_equicont_eq0}
    \end{equation}
    Furthermore, let $\ID_n,\ID_n^{\infty}\in (0,\infty)$ be independent of $n$, and
    \begin{equation}
        \sup_{i=1,...,n}\frac{D_{n}^{\infty}(\frac{i}{n})}{\sqrt{n}} \to 0.\label{cor_martingale_equicont_cond1}
    \end{equation}
    Then, the process $\G_n(f)$ is equicontinuous with respect to $V$, that is, for every $\eta > 0$,
    \[
        \lim_{\sigma \to 0}\limsup_{n \to \infty} \IP\Big( \sup_{f,g \in \sF, V(f-g) \leq \sigma} |\G_n(f) - \G_n(g)| \geq \eta \Big) = 0.
    \]
\end{cor}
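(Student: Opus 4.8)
\textbf{Proof plan for Corollary \ref{cor_martingale_equicont}.}

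The plan is to split the empirical process according to the martingale decomposition $\G_n(f) = \G_n^{(1)}(f) + \G_n^{(2)}(f)$ recalled at the beginning of Section \ref{sec_max_ineq_tight}, and to establish asymptotic equicontinuity for each term separately; adding the two estimates yields equicontinuity of $\G_n$ with respect to the distance $V$. Since for every $f,g \in \sF$ one has
\[
    \sup_{V(f-g)\le\sigma}|\G_n(f)-\G_n(g)| \le \sup_{V(f-g)\le\sigma}|\G_n^{(1)}(f)-\G_n^{(1)}(g)| + \sup_{V(f-g)\le\sigma}|\G_n^{(2)}(f)-\G_n^{(2)}(g)|,
\]
it suffices to control each supremum and then send $n\to\infty$ followed by $\sigma\to 0$.

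For the martingale part $\G_n^{(1)}$, I would apply Theorem \ref{thm_martingale_equicont} not to $\sF$ itself but to the ``difference class'' $\sF_\sigma := \{f - g : f,g\in\sF,\ V(f-g)\le\sigma\}$, whose elements all satisfy $V(\cdot)\le\sigma$, using the envelope $F_\sigma = F_f + F_g$ built from the envelopes of the two halves. The hypotheses of Theorem \ref{thm_martingale_equicont} (Assumptions \ref{ass2} and \ref{ass3}) are inherited by $\sF_\sigma$. The right-hand side of \reff{thm_martingale_equicont_res1} then has three pieces. The entropy integral $\int_0^\sigma \psi(\varepsilon)\sqrt{1\vee \IH(\varepsilon,\sF_\sigma,V)}\,d\varepsilon$ tends to $0$ as $\sigma\to 0$ because $\IH(\varepsilon,\sF_\sigma,V)\lesssim \IH(\varepsilon/2,\sF,V)$ and the full integral \reff{cor_martingale_equicont_eq0} is finite, so its tail vanishes; here the prefactor $1 + \ID_n^\infty/\ID_n + \ID_n/\ID_n^\infty$ is bounded uniformly in $n$ since $\ID_n,\ID_n^\infty$ are constants by assumption. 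The term $\int_0^\sigma \frac{1}{\varepsilon\psi(\varepsilon)^2}d\varepsilon$ is finite and $\to 0$ as $\sigma\to 0$ by the choice of $\psi$ in \reff{definition_psi_factor} (the double-log factor makes $\varepsilon^{-1}\psi(\varepsilon)^{-2}$ integrable near $0$), and its prefactor involving $q^*(C_\Delta^{-1}C_\beta^{-2})$ and $(\ID_n^\infty/\ID_n)^2$ is again bounded uniformly in $n$. The remaining truncation term $\sqrt{n}\,\|F_\sigma\Ii_{\{F_\sigma > \frac14 m(n,\sigma,\N(\sigma/2,\sF_\sigma,V_n))\}}\|_1$ must be shown to vanish as $n\to\infty$ for each fixed $\sigma$: using the definition \reff{lemma_chaining_definition_m} of $m(n,\delta,k)$ together with \reff{cor_martingale_equicont_cond1}, the threshold $m(n,\sigma,\cdot)$ grows like $n^{1/2}$ up to the slowly varying factors $r(\sigma/\ID_n)$ and $H(\N(\sigma/2,\sF,V_n))^{-1/2}$ (bounded below for fixed $\sigma$), so $\frac14 m \to\infty$; since $F_\sigma$ has a uniformly bounded $L^1$-type envelope via Assumption \ref{ass_clt_expansion_ass2} (finiteness of $\sup_{i,u}\|\bar F(Z_i,u)\|_{2\bar p}$ plus the boundedness of $D_{f,n}$), a Markov/uniform-integrability argument shows $\sqrt{n}\,\|F_\sigma \Ii_{\{F_\sigma > c_n\}}\|_1 \to 0$ whenever $c_n/\sqrt n \to \infty$, which is exactly what \reff{cor_martingale_equicont_cond1} delivers.

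For the smooth part $\G_n^{(2)}$, I would invoke \cite[Theorem 4.4]{empproc} directly, as the remark preceding the corollary indicates: under Assumption \ref{ass2} the functions $z\mapsto \IE[f(Z_i,i/n)|Z_{i-1}=z]$ inherit the $(L,s,R,C)$-smoothness encoded in $\bar\sF^{(1)}$, and Assumptions \ref{ass_clt_process}, \ref{ass_clt_fcont}, \ref{ass_clt_expansion_ass2} supply exactly the locally-stationary approximation, continuity-in-$u$ and envelope conditions that theorem requires; the entropy condition \reff{cor_martingale_equicont_eq0} with the factor $\psi$ is (a fortiori) the one needed there. This yields $\lim_{\sigma\to 0}\limsup_n \IP(\sup_{V(f-g)\le\sigma}|\G_n^{(2)}(f)-\G_n^{(2)}(g)|\ge\eta)=0$. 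Combining with the $\G_n^{(1)}$ bound and using subadditivity of $\limsup$ and of probabilities of unions finishes the proof.

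The main obstacle is the truncation/negligibility term in \reff{thm_martingale_equicont_res1}: one must verify that the chaining threshold $m(n,\sigma,\N(\sigma/2,\sF,V_n))$ really does diverge faster than $\sqrt n$ at a rate that makes $\sqrt n$ times the tail integral of the envelope go to zero, uniformly in the (possibly growing) covering number $\N(\sigma/2,\sF,V_n)$. This requires controlling how $\N(\sigma/2,\sF,V_n)$ and $r(\sigma/\ID_n)$ behave in $n$ for fixed $\sigma$ — here the assumption that $\ID_n,\ID_n^\infty$ are constant and the finiteness of the full entropy integral \reff{cor_martingale_equicont_eq0} (which bounds $\N(\sigma/2,\sF,V_n)$ independently of $n$) are what make the argument go through, and keeping track of all these dependencies is the delicate bookkeeping step.
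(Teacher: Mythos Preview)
Your overall architecture is exactly what the paper does: split $\G_n = \G_n^{(1)} + \G_n^{(2)}$, apply Theorem \ref{thm_martingale_equicont} to the difference class for the martingale part, apply \cite[Theorem 4.4]{empproc} for the smooth part, and use $\IH(\varepsilon,\tilde\sF,V)\le 2\IH(\varepsilon/2,\sF,V)$ together with \reff{cor_martingale_equicont_eq0} for the entropy terms. The handling of the entropy integral and of $\int_0^\sigma \varepsilon^{-1}\psi(\varepsilon)^{-2}\,d\varepsilon$ is fine.

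The gap is in the truncation term, and it is precisely the one you flag as the ``main obstacle''. Your resolution is wrong: from \reff{lemma_chaining_definition_m}, for fixed $\sigma$ and with $\ID_n,\ID_n^\infty$ constant and $\N(\sigma/2,\sF,V_n)$ bounded uniformly in $n$ (by \reff{cor_martingale_equicont_eq0}), the threshold satisfies $m(n,\sigma,\N(\sigma/2))\sim c(\sigma)\sqrt{n}$, so $c_n/\sqrt{n}$ is \emph{bounded}, not divergent. Condition \reff{cor_martingale_equicont_cond1} does not change this. A plain Markov bound using only an $L^1$ envelope therefore cannot kill $\sqrt{n}\,\|F\Ii_{\{F>c_n\}}\|_1$.

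What the paper does instead is write
\[
    \sqrt{n}\,\|F\Ii_{\{F>m/4\}}\|_1 \le \frac{4\sqrt{n}}{m}\,\|F^2\Ii_{\{F>m/4\}}\|_{1,n}
    = \frac{4\sqrt{1\vee\IH(\sigma/2)}}{r(\sigma/\ID_n)\,\ID_n^\infty}\,\|F^2\Ii_{\{F>m/4\}}\|_{1,n},
\]
so the prefactor is bounded in $n$ for fixed $\sigma$, and it remains to show $\|F^2\Ii_{\{F>m/4\}}\|_{1,n}\to 0$ as $n\to\infty$. Here the nonstationarity is the real issue: the distribution of $F(Z_i,i/n)=2D_n^\infty(i/n)\bar F(Z_i,i/n)$ varies with $i$ and $n$, and $D_n^\infty(i/n)$ need not be bounded (only its $L^2$-average is). The paper therefore invokes the locally stationary structure---Assumption \ref{ass_clt_process} together with the H\"older-type condition \reff{ass_clt_expansion_ass2_eq1} on $\bar F$ in Assumption \ref{ass_clt_expansion_ass2}---to approximate $\bar F(Z_i,i/n)$ in $L^2$ by $\bar F(\tilde Z_0(v),v)$ for a fixed $v$ (this is Lemma \ref{lemma_theorem_clt_mult}(ii), equations \reff{cor_martingale_equicont_eq4}--\reff{cor_martingale_equicont_eq5}), thereby reducing to uniform integrability of a single square-integrable random variable, where the conclusion is immediate. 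This is the step where the assumptions \ref{ass_clt_process}, \ref{ass_clt_fcont}, \ref{ass_clt_expansion_ass2} are genuinely used for $\G_n^{(1)}$, not only for $\G_n^{(2)}$; your sketch uses them only for the latter and tries to handle the envelope term by a moment bound that the hypotheses do not supply.
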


\begin{rem}
    Compared to \cite[Corollary 4.5]{empproc}, the condition \reff{cor_martingale_equicont_eq0} of Corollary \ref{cor_martingale_equicont} is not optimal due to the additional $\log$-factor. The reason here is that we do not approximate the distance $R_n^2(\cdot)$ uniformly over the class $\sF$ in an external step but evaluate the needed bounds for $R_n^2(\cdot)$ during the chaining process. This is also the reason why our result does not include the i.i.d. version as a special case. However, in comparison to the results of \cite[Lemma 7.12]{empproc} we do not lose much due to this factor in the presence of polynomial dependence. Even in the case of exponential decay, the additional factor is of the same size as the factor already contributed due to dependence.
    
\end{rem}

\section{Conclusion}
\label{sec_conclusion} 

In this paper, we have developed an empirical process theory for locally stationary processes and function classes of possibly nonsmooth functions. Here, the dependence was quantified with the functional dependence measure. We have proven maximal inequalities and functional central limit theorems. An empirical process theory for locally stationary processes is a key step to derive asymptotic and nonasymptotic results for a large class of time series.

We have shown that our theory can be applied to empirical distribution functions (EDFs) and kernel density estimators, but much more structures can be discussed. Compared to earlier papers in the context of stationary processes and the EDF,  our results provide remarkable weak conditions on the dependence decay of the process. In particular, compared to \cite{mayer2019}, we could prove that functional weak convergence of the EDF holds under much simpler assumptions. 

From a technical point of view, the linear and moment-based nature of the functional dependence measure has forced us to modify several approaches from \cite{empproc}. A main issue was given by the fact that the dependence measure only transfers decay rates of continuous functions. The nonsmooth nature of the function class was dealt with a decomposition into a martingale and a conditional expectation part. 

\bibliographystyle{plain}
\bibliography{reference}

\newpage

\begin{center}
{\Large \bf Supplementary Material}\\
\end{center}
  
\section{Appendix}
\label{suppA}

This material contains some proof details for the main paper.

\subsection{Proofs of Section \ref{sec_model}}
\label{sec_model_supp}

\begin{lem}\label{depend_trans_2}
	Let Assumption \ref{ass2} hold for some $\nu \ge 2$. Then for all $u\in[0,1]$,
	\begin{eqnarray}
	    \delta^{\E[f(Z_i,u)|Z_{i-1}]}_{\nu}(k) &\le& |D_{f,n}(u)|\cdot \Delta(k),\label{depend_trans_2_eq22_part1}\\
	    \sup_{i}\Big\| \sup_{f\in\sF}\big|\IE[f(Z_i,u)|Z_{i-1}]\quad\quad\quad\quad\quad\quad &&\nonumber\\
	     - \IE[f(Z_i,u)|Z_{i-1}]^{*(i-k)}\big| \Big\|_{\nu} &\le& D_{n}^{\infty}(u)\cdot \Delta(k),\label{depend_trans_2_eq22_part2}\\
	    \sup_{i}\|f(Z_i,u)\|_{2} &\le& |D_{f,n}(u)|\cdot C_{\Delta}.\label{depend_trans_2_eq22_part3}
	\end{eqnarray}
	Furthermore, 
	\begin{eqnarray}
	    \Big\| \IE[f(Z_i,u)^2|Z_{i-1}] \quad\quad\quad\quad\quad\quad&&\nonumber\\ -\IE[f(Z_i,u)^2|Z_{i-1}]^{*(i-k)} \Big\|_{\nu/2} &\le& 2 |D_{f,n}(u)|\cdot \|f(Z_i,u)\|_{\nu}\cdot \Delta(k),\label{depend_trans_2_eq2}\\
		\Big\|\sup_{f\in \sF}\big| \IE[f(Z_i,u)^2|Z_{i-1}] \quad\quad\quad\quad&&\nonumber\\ -\IE[f(Z_i,u)^2|Z_{i-1}]^{*(i-k)} \big|\Big\|_{\nu/2} &\le& D_{n}^{\infty}(u)^2 \cdot C_{\Delta} \cdot \Delta(k),\nonumber\\
		&&\label{depend_trans_2_eq1}
	\end{eqnarray}
	where $C_{\Delta} := 2\max\{d,\tilde d\}|L_{\sF}|_1 C_X^s C_R + C_{\bar f}$.
\end{lem}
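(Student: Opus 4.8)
The plan is to bound each functional dependence measure by decomposing the relevant conditional expectations along the coupling construction $Z_i \mapsto Z_i^{*(i-k)}$, and then to exploit the $(L,s,R,C)$-class structure of $\bar\sF^{(1)}$ and $\bar\sF^{(2)}$ together with the moment bounds of Assumption \ref{ass2}. Throughout I use the decomposition $f(z,u) = D_{f,n}(u)\bar f(z,u)$ from \reff{f_form_decomposition}, which immediately pulls the deterministic factor $|D_{f,n}(u)|$ (or $D_n^\infty(u)$ after taking a supremum over $f\in\sF$) out of every norm; so it suffices to work with $\bar f$ and the classes $\bar\sF^{(\kappa)}$.

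For \reff{depend_trans_2_eq22_part1}, note that $\IE[f(Z_i,u)\mid Z_{i-1}] = D_{f,n}(u)\cdot g(Z_{i-1},u)$ with $g\in\bar\sF^{(1)}$, so $\delta_\nu^{\IE[f(Z_i,u)\mid Z_{i-1}]}(k) = |D_{f,n}(u)|\cdot \delta_\nu^{g(Z_{i-1},u)}(k)$. Since $\bar\sF^{(1)}$ is an $(L,s,R,C)$-class, for the coupled versions $Z_{i-1}$ and $Z_{i-1}^{*(i-1-(k-1))}$ one has $|g(Z_{i-1},u) - g(Z_{i-1}^{*},u)| \le |Z_{i-1} - Z_{i-1}^{*}|_{L,s}^{s}\big(R(Z_{i-1},u) + R(Z_{i-1}^{*},u)\big)$; here $|Z_{i-1}-Z_{i-1}^{*}|_{L,s}^{s} = \sum_{j\ge 0} L_j |X_{i-1-j} - X_{i-1-j}^{*}|_\infty^s$, and only finitely many coordinates are perturbed. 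Taking the $L^\nu$ norm, applying Hölder with the conjugate exponents $p$ and $p/(p-1)$ built into Assumption \ref{ass2}, and using $\sup \|R(Z_{i-1},u)\|_{2p}\le C_R$ together with $\|X_{ij}-X_{ij}^{*(i-k)}\|_{2sp/(p-1)}\le \delta^X_{2sp/(p-1)}(k)$, the coordinatewise perturbations sum to exactly $2dC_R\sum_{j=0}^{k-1} L_j (\delta^X(k-1-j))^s \le \Delta(k)$, the quantity defined in Assumption \ref{ass2}. Estimate \reff{depend_trans_2_eq22_part2} is the same computation after moving $\sup_{f\in\sF}$ inside — the pointwise Lipschitz bound is uniform in $g\in\bar\sF^{(1)}$, so the supremum is absorbed before taking norms, and $|D_{f,n}(u)|$ is replaced by $D_n^\infty(u)$. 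Estimate \reff{depend_trans_2_eq22_part3} follows by writing $\|f(Z_i,u)\|_2 = |D_{f,n}(u)|\cdot\|\bar f(Z_i,u)\|_2$ and bounding $\|\bar f(Z_i,u)\|_2 \le |g^{(2)}(0,u)| + \big\||Z_i|_{L,s}^{s}(R(Z_i,u)+R(0,u))\big\|_2$ using the $(L,s,R,C)$-class inequality for $\bar\sF^{(2)}$ with $z'=0$, then Hölder and the moment assumptions; this produces the constant $C_\Delta = 2\max\{d,\tilde d\}|L_{\sF}|_1 C_X^s C_R + C_{\bar f}$.

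The squared estimates \reff{depend_trans_2_eq2} and \reff{depend_trans_2_eq1} need one extra algebraic step: writing $\IE[f(Z_i,u)^2\mid Z_{i-1}] = D_{f,n}(u)^2\, h(Z_{i-1},u)^2$ with $h\in\bar\sF^{(2)}$, I factor the difference of squares, $h(Z_{i-1},u)^2 - h(Z_{i-1}^{*},u)^2 = \big(h(Z_{i-1},u) - h(Z_{i-1}^{*},u)\big)\big(h(Z_{i-1},u)+h(Z_{i-1}^{*},u)\big)$, apply Cauchy--Schwarz in $L^{\nu/2}$ to split it into the $L^\nu$-norm of the first factor (controlled as in \reff{depend_trans_2_eq22_part1}, giving $\Delta(k)$) times the $L^\nu$-norm of the sum of the two squares, and bound the latter by $2\|\bar f(Z_i,u)\|_\nu$ (up to the $D_{f,n}$ factor; note $\|h(Z_{i-1},u)\|_\nu = \|\IE[\bar f(Z_i,u)^2\mid Z_{i-1}]^{1/2}\|_\nu \le \|\bar f(Z_i,u)\|_\nu$ by Jensen). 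That yields \reff{depend_trans_2_eq2} with the claimed factor $2|D_{f,n}(u)|\|f(Z_i,u)\|_\nu$; and \reff{depend_trans_2_eq1} is obtained by taking the supremum over $f\in\sF$ inside, bounding the second factor crudely by $D_n^\infty(u)\cdot C_\Delta$ via \reff{depend_trans_2_eq22_part3}-type reasoning applied to the envelope, and replacing $|D_{f,n}(u)|$ by $D_n^\infty(u)$.

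The main obstacle is bookkeeping rather than anything deep: one has to be careful that the coupling index shifts correctly (the dependence measure $\delta^X$ at lag $k$ for $Z_{i-1}$ corresponds to perturbing $\varepsilon_{i-1-(k-1)} = \varepsilon_{i-k}$, so the inner sum runs over $j = 0,\dots,k-1$ with argument $\delta^X(k-1-j)$, matching exactly the definition of $\Delta(k)$ in Assumption \ref{ass2}), and that the Hölder exponents $(p, p/(p-1))$ and the moment exponents $2p$ and $2sp/(p-1)$ line up so that $\big\||Z|_{L,s}^s\big\|$ combines with $\|R\|_{2p}$ to give a finite bound. The only genuinely delicate point is ensuring the $\sup_{f\in\sF}$ can be pulled inside the norm before applying the moment bounds — this works precisely because the $(L,s,R,C)$-class inequality is a \emph{pointwise} (in $z$, uniform in $g\in\sG$) Lipschitz-type estimate, so the supremum over $f$ only ever sees the common envelope $R(z,u) + R(z',u)$, which is already $f$-free.
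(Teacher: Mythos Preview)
Your proof plan is correct and follows essentially the same route as the paper: factor out $D_{f,n}(u)$, exploit the $(L,s,R,C)$-structure of $\bar\sF^{(\kappa)}$ together with H\"older at the exponents built into Assumption~\ref{ass2}, and for the squared conditional moments use the difference-of-squares factorization followed by Cauchy--Schwarz in $L^{\nu/2}$ with the Jensen bound $\|h(Z_{i-1},u)\|_\nu \le \|\bar f(Z_i,u)\|_\nu$. One small bookkeeping slip: in your argument for \reff{depend_trans_2_eq22_part3} the $(L,s,R,C)$-class $\bar\sF^{(2)}$ consists of functions of $Z_{i-1}$, not $Z_i$, so you should first pass through $\|\bar f(Z_i,u)\|_2 = \|\bar\mu^{(2)}_{f,i}(Z_{i-1},u)\|_2$ (tower property) before applying the class inequality at $z'=0$; this does not change the bound since the moment assumptions are uniform in $i$.
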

\begin{proof}[Proof of Lemma \ref{depend_trans_2}]
	Let $\bar\mu_{f,i}^{(1)}(z,u) = \IE[\bar f(Z_i,u)|Z_{i-1}=z]$ and $\bar\mu_{f,i}^{(2)}(z,u) = \IE[\bar f(Z_i,u)^2|Z_{i-1}=z]$. We have by Assumption \ref{ass2} that 
	\begin{eqnarray*}
	    && \sup_{i}\big\|\E[f(Z_i,u)|Z_{i-1}] - \E[f(Z_i,u)|Z_{i-1}]^{*(i-k)}\big\|_{\nu}\\
	    &=& |D_{f,n}(u)|\cdot \sup_{i}\big\|\bar\mu_{f,i}^{(1)}(Z_{i-1},u) - \bar\mu_{f,i}^{(1)}( Z_{i-1}^{*(i-k)},u)\big\|_{\nu}\\
	    &\le& |D_{f,n}(u)|\cdot \sup_{i} \norm{\big| Z_{i-1}-Z_{i-1}^{*(i-k)}\big|_{L_{\sF,s}}^{s}}_{\frac{p\nu}{p-1}}\norm{R(Z_{i-1},u) + R(Z_{i-1}^{*(i-k)},u)}_{p\nu} \\
    &\le& |D_{f,n}(u)|\cdot \sup_{i}\norm{ \sum_{j=0}^{\infty}L_{\sF,j}\big|X_{i-1-j}-X_{i-1-j}^{*(i-k)}\big|_{\infty}^s }_{\frac{p\nu}{p-1}}\\
    &&\quad\quad \times \Big(\norm{R(Z_{i-1},u)}_{p\nu} + \norm{R(Z_{i-1}^{*(i-k)},u)}_{p\nu}\Big) \\
    &\le& |D_{f,n}(u)|\cdot 2d C_R \sum_{j=0}^{k-1}L_{\sF,j} \delta_{\frac{p \nu s}{p-1}}(k-j-1)^{s},
	\end{eqnarray*}
	that is, the assertion \reff{depend_trans_2_eq22_part1} holds with the given $\Delta(k)$. The proof of \reff{depend_trans_2_eq22_part2} is similar.
	
	We now prove \reff{depend_trans_2_eq22_part3}. We have
	\[
	    \IE[f(Z_i,u)^2] = \IE[\IE[f(Z_i,u)^2|Z_{i-1}]] =  D_{f,n}(u)^2\IE[\bar\mu_{f,i}^{(2)}(Z_{i-1},u)^2]
	\]
	and thus $\|f(Z_i,u)\|_2 = |D_{f,n}(u)|\cdot \|\bar\mu_{f,i}^{(2)}(Z_{i-1},u)\|_2$. Since
	\[
	    |\bar\mu_{f,i}^{(2)}(y,u)| \le |\bar\mu_{f,i}^{(2)}(y,u) - \bar\mu_{f,i}^{(2)}(0,u)| + |\bar\mu_{f,i}^{(2)}(0,u)|,
	\]
	the proof now follows the same lines as in the proof of \cite[Lemma 7.3]{empproc}.
	
	We now show \reff{depend_trans_2_eq2} and \reff{depend_trans_2_eq1}. We have
	\[
	    \big|\bar\mu_{f,i}^{(2)}(z,u)^2 - \bar\mu_{f,i}^{(2)}(z',u)^2\big| = \big|\bar\mu_{f,i}^{(2)}(z,u) - \bar\mu_{f,i}^{(2)}(z',u)\big|\cdot \big[|\bar\mu_{f,i}^{(2)}(z,u)| + |\bar\mu_{f,i}^{(2)}(z',u)|\big].
	\]
	We then have by the Cauchy Schwarz inequality that  
	\begin{eqnarray}
		&& \Big\| \sup_{f\in \sF}\big| \bar\mu_{f,i}^{(2)}(Z_{i-1},u)^2 - \bar\mu_{f,i}^{(2)}(Z_{i-1}^{*(i-k)},u)^2\big|\, \Big\|_{\nu/2}\nonumber\\
		&\le& \Big\| \sup_{f\in \sF}\big| \bar\mu_{f,i}^{(2)}(Z_{i-1},u) - \bar\mu_{f,i}^{(2)}(Z_{i-1}^{*(i-k)},u)\big|\, \Big\|_{\nu} \cdot 2\Big\| \sup_{f\in \sF}\big|\bar\mu_{f,i}^{(2)}(Z_{i-1},u)\big|\, \Big\|_{\nu}.\label{depend_trans_2_eq3}
	\end{eqnarray}
	Since $\{\bar\mu_{f,i}^{(2)}:f\in \sF, i\in \{1,...,n\}\}$ forms a $(L_{\sF},s,R,C)$-class, the first factor in \reff{depend_trans_2_eq3} is bounded by $\Delta(k)$ as before. Furthermore,
	\begin{eqnarray*}
	    |\bar\mu_{f,i}^{(2)}(z,u)|&\le& |\bar\mu_{f,i}^{(2)}(z,u) - \bar\mu_{f,i}^{(2)}(0,u)| + |\bar\mu_{f,i}^{(2)}(0,u)|\\
	    &\le& |z|_{L_{\sF},s}^s(R(z,u) + R(0,u)) + |\bar\mu_{f,i}^{(2)}(0,u)|.
	\end{eqnarray*}
	Note that
	\begin{eqnarray*}
	    && \Big\||Z_{i-1}|_{L_{\sF},s}^s \cdot \big[R(Z_{i-1},u) + R(0,u)\big]\Big\|_{\nu}\\
	    &\le& \Big\| \sum_{j=0}^{\infty}L_{\sF,j}|Z_{i-1-j}|_{\infty}^s\Big\|_{\frac{p}{p-1}\nu}\cdot \Big(\|R(Z_{i-1},u)\|_{p\nu} + |R(0,u)|\Big)\\
	    &\le& d |L_{\sF}|_{1}\sup_{i,j}\|X_{ij}\|_{\frac{\nu sp}{p-1}}^s \cdot (C_R + |R(0,u)|)\\
	    &\le& 2d|L_{\sF}|_{1}C_X^sC_R.
	\end{eqnarray*}
	We now obtain \reff{depend_trans_2_eq1} from \reff{depend_trans_2_eq3} with the given $C_{\Delta}$.
	
	By the Cauchy-Schwarz inequality we have for $q \ge 2$,
	\begin{eqnarray}
	    &&\delta_{\nu/2}^{\E[f(Z_i,u)^2|Z_{i-1}]}(k)\nonumber\\
	    &=& \sup_i \Big\| \E[f(Z_i,u)^2|Z_{i-1}] - \E[f(Z_i,u)^2|Z_{i-1}]^{*(i-k)}\Big\|_{\nu/2}\nonumber\\
	    &=& |D_{f,n}(u)|\cdot \sup_i \Big\| D_{f,n}(u)\big(\bar\mu_{f,i}^{(2)}(Z_{i-1},u)^2 - \bar\mu_{f,i}^{(2)}(Z_{i-1}^{*(i-k)},u)^2\big)\Big\|_{\nu/2}\nonumber\\
	    &\le& |D_{f,n}(u)|\cdot \sup_i\Big\| \bar\mu_{f,i}^{(2)}(Z_{i-1},u) - \bar\mu_{f,i}^{(2)}(Z_{i-1}^{*(i-k)},u) \Big\|_{\nu}\nonumber\\
	    &&\quad\quad\quad\quad\times 2\Big\|D_{f,n}(u)\bar\mu_{f,i}^{(2)}(Z_{i-1},u)\Big\|_{\nu}\label{depend_trans_2_eq4}
	\end{eqnarray}
	Furthermore,
	\begin{equation}
	    \Big\|D_{f,n}(u)\bar\mu_{f,i}^{(2)}(Z_{i-1},u)\Big\|_{\nu} \le \|\E[f(Z_i,u)^2|Z_{i-1}]^{1/2}\|_{\nu} \le \|f(Z_i,u)\|_{\nu}.\label{depend_trans_2_eq5}
	\end{equation}
    Since Assumption \ref{ass2} holds for $\bar\mu_{f,i}^{(2)}$, the first factor in \reff{depend_trans_2_eq4} is bounded by $D_{f,n}(u)\Delta(k)$ as in the proof of \cite[Lemma  7.3]{empproc}. Inserting this and  \reff{depend_trans_2_eq5} into \reff{depend_trans_2_eq4}, we obtain the result \reff{depend_trans_2_eq2}.
\end{proof}

\subsection{Proofs of Section \ref{sec_maximal}}
\label{sec_maximal_supp}

\subsubsection{Proof of Theorem \ref{lemma_hoeffding_dependent}}

In this section, we consider
\[
    W_i(f) = \IE[f(Z_i,\frac{i}{n})^2|Z_{i-1}],\quad\quad S_n(f) := \sum_{i=1}^{n}\big\{W_i(f) - \IE W_i(f)\big\}.
\]
Then
\[
    R_n(f)^2 = \frac{1}{n}\sum_{i=1}^{n}W_i(f), \quad\quad R_n(f)^2 - \IE R_n(f)^2 = \frac{1}{n}S_n(f).
\]

We obtain from Lemma \ref{depend_trans_2}, \reff{depend_trans_2_eq2} and \reff{depend_trans_2_eq1} the following results with $\nu=2$.
\begin{lem}
    \label{ass_general2}
    Suppose that Assumption \ref{ass2} holds. Then for each $i=1,...,n$, $j\in\N$, $s\in\N \cup\{\infty\}$, $f\in\sF$,
    \begin{eqnarray*}
        \Big\| \sup_{f\in \sF}\big|W_i(f) - W_i(f)^{*(i-j)}\big|\,\Big\|_1 &\le&  C_{\Delta} D_{n}^{\infty}(\frac{i}{n})^2\Delta(j),\\
        \big\|W_i(f) - W_i(f)^{*(i-j)}\big\|_1 &\le& 2|D_{f,n}(\frac{i}{n})|\cdot \|f(Z_i,\frac{i}{n})\|_{2} \Delta(j),\\
        \big\|W_i(f)\|_s &\le& \|f(Z_i,\frac{i}{n})\|_{2s}^2.
    \end{eqnarray*}
\end{lem}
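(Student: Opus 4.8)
\textbf{Proof plan for Lemma \ref{ass_general2}.}
The plan is to simply read off the three bounds from the estimates already collected in Lemma \ref{depend_trans_2}, specialised to $\nu = 2$. Recall that $W_i(f) = \IE[f(Z_i,\frac{i}{n})^2 \mid Z_{i-1}]$, so $W_i(f)$ is exactly $\IE[f(Z_i,u)^2 \mid Z_{i-1}]$ evaluated at $u = \frac{i}{n}$. Likewise $W_i(f)^{*(i-j)}$ is $\IE[f(Z_i,u)^2\mid Z_{i-1}]^{*(i-j)}$ at $u = \frac{i}{n}$, since the $*(i-j)$-operation acts on the underlying innovations and commutes with fixing the time argument $u$.

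First I would establish the third (pointwise) bound: taking $\nu = 2$ and $s \in \N\cup\{\infty\}$ in the moment statement underlying \reff{depend_trans_2_eq22_part3}, or directly by Jensen's inequality, $\|W_i(f)\|_s = \|\IE[f(Z_i,\frac{i}{n})^2\mid Z_{i-1}]\|_s \le \|f(Z_i,\frac{i}{n})^2\|_s = \|f(Z_i,\frac{i}{n})\|_{2s}^2$, which is the claimed inequality; note this needs no appeal to Assumption \ref{ass2} beyond finiteness of the moments. Second, the pointwise dependence bound is precisely \reff{depend_trans_2_eq2} with $\nu = 2$ and $u = \frac{i}{n}$: it gives $\|W_i(f) - W_i(f)^{*(i-j)}\|_1 \le 2|D_{f,n}(\frac{i}{n})|\cdot\|f(Z_i,\frac{i}{n})\|_2\cdot\Delta(j)$. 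Third, the uniform-over-$\sF$ bound is \reff{depend_trans_2_eq1} with $\nu = 2$ and $u = \frac{i}{n}$, which reads $\big\|\sup_{f\in\sF}|W_i(f) - W_i(f)^{*(i-j)}|\big\|_1 \le D_n^{\infty}(\frac{i}{n})^2 \cdot C_{\Delta}\cdot\Delta(j)$, matching the stated first display.

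There is essentially no obstacle here: the content is already contained in Lemma \ref{depend_trans_2}, and the only things to check are the bookkeeping identifications $W_i(f) = \IE[f(Z_i,u)^2\mid Z_{i-1}]|_{u=i/n}$ and that replacing $\|\cdot\|_{\nu/2}$ by $\|\cdot\|_1$ is exactly the case $\nu = 2$. One should remark that Assumption \ref{ass2} is invoked through the fact that $\{\bar\mu_{f,i}^{(2)}: f\in\sF, i\}$ forms an $(L_{\sF},s,R,C)$-class (used in deriving \reff{depend_trans_2_eq2}, \reff{depend_trans_2_eq1}), and that the moment conditions in Assumption \ref{ass2} ensure all the $L^\nu$-norms appearing are finite, so the bounds are not vacuous. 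Hence the lemma follows immediately by specialisation, and I would simply state this.
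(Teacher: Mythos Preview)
Your proposal is correct and follows exactly the paper's own approach: the paper states the lemma as an immediate specialisation of Lemma~\ref{depend_trans_2} (specifically \eqref{depend_trans_2_eq2} and \eqref{depend_trans_2_eq1}) with $\nu=2$, and gives no further proof. Your identification of $W_i(f)$ with $\IE[f(Z_i,u)^2\mid Z_{i-1}]$ at $u=i/n$ and the direct Jensen argument for the third bound are precisely what is intended.
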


We approximate $W_i(f)$ by independent variables as follows (cf. also  \cite{Wu13}, \cite{wuzhang2017}). Let 
\[
    W_{i,j}(f) := \E[W_i(f)|\varepsilon_{i-j},\varepsilon_{i-j+1},...,\varepsilon_i],\quad\quad j\in\N,
\]
and
\[
    S_{n,j}(f) := \sum_{i=1}^{n}\{W_{i,j}(f) - \IE W_{i,j}(f)\}.
\]
Let $q \in\{1,...,n\}$ be arbitrary. Put $L := \lfloor\frac{\log(q)}{\log(2)}\rfloor$ and $\tau_l := 2^l$ ($l=0,...,L-1$), $\tau_L := q$. Then we have
\[
    W_i(f) = W_i(f) - W_{i,q}(f) + \sum_{l=1}^{L}(W_{i,\tau_{l}}(f)-W_{i,\tau_{l-1}}(f)) + W_{i,1}(f)
\]
(in the case $q = 1$, the sum in the middle does not appear) and thus
\[
    S_n(f) = \big[S_n(f) - S_{n,q}(f)\big] + \sum_{l=1}^{L}\big[S_{n,\tau_l}(f) - S_{n,\tau_{l-1}}(f)\big] + S_{n,1}(f).
\]
We write
\[
    S_{n,\tau_l}(f) - S_{n,\tau_{l-1}}(f) = \sum_{i=1}^{\lfloor \frac{n}{\tau_l}\rfloor+1}T_{i,l}(f),\quad\quad T_{i,l}(f) := \sum_{k=(i-1)\tau_l+1}^{(i\tau_l)\wedge n}\big[W_{k,\tau_l}(f) - W_{k,\tau_{l-1}}(f)\big].
\]
The random variables $T_{i,l}(f), T_{i',l}(f)$ are independent if $|i-i'| > 1$. This leads to the decomposition
\begin{eqnarray}
    \max_{f\in \sF}\Big|\frac{1}{n}S_n(f)\Big| &\le& \max_{f\in \sF}\frac{1}{n}\big|S_n(f) - S_{n,q}(f)\big|\nonumber\\
    && + \sum_{l=1}^{L}\Big[\max_{f\in \sF}\Big|\frac{1}{\frac{n}{\tau_l}}\underset{i \text{ even}}{\sum_{i=1}^{\lfloor \frac{n}{\tau_l}\rfloor + 1}}\frac{1}{\tau_l}T_{i,l}(f)\Big| + \max_{f\in \sF}\Big|\frac{1}{\frac{n}{\tau_l}}\underset{i \text{ odd}}{\sum_{i=1}^{\lfloor \frac{n}{\tau_l}\rfloor + 1}}\frac{1}{\tau_l}T_{i,l}(f)\Big|\Big]\nonumber\\
    && + \max_{f\in \sF}\frac{1}{n}\big|S_{n,1}(f)\big|\nonumber\\
    &=:& A_1 + A_2 + A_3.\label{rosenthal_main_decomposition}
\end{eqnarray}

The next result is a uniform bound on means of independent random variables.

\begin{lem}\label{lemma_elementarhoeffding}
    Assume that $Q_i(f)$, $i = 1,...,m$ are independent variables indexed by $f\in \sF$ which fulfill $\E Q_i(f) = 0$, $\frac{1}{m}\sum_{i=1}^{m}\|Q_i(f)\|_1 \le \sigma_Q$ and $|Q_i(f)| \le M_Q$ a.s. ($i=1,...,n$). Then there exists some universal constant $c > 0$ such that
    \begin{equation}
         \E \max_{f\in \sF}\Big|\frac{1}{m}\sum_{i=1}^{m}Q_i(f)\Big| \le c\Big(\sigma_Q + \frac{M_Q H}{m}\Big),\label{bernstein_1moment_expectationresult}
    \end{equation}
    where $H$ is defined by \reff{H_fix}.
\end{lem}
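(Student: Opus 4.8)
The plan is to prove Lemma \ref{lemma_elementarhoeffding} as a standard maximal inequality for sums of independent centered random variables, combining a Bernstein-type tail bound for each fixed $f$ with a union bound over the finite class $\sF$, and then integrating the tail to recover the $L^1$-bound on the maximum.

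First I would fix $f\in\sF$ and apply Bernstein's inequality to the independent, centered, bounded summands $Q_i(f)$. Since $|Q_i(f)|\le M_Q$ a.s., we control the variance crudely by $\frac{1}{m}\sum_{i=1}^m \E Q_i(f)^2 \le M_Q\cdot\frac{1}{m}\sum_{i=1}^m\|Q_i(f)\|_1 \le M_Q\sigma_Q$, so Bernstein gives, for $t>0$,
\[
    \IP\Big(\Big|\frac{1}{m}\sum_{i=1}^m Q_i(f)\Big| > t\Big) \le 2\exp\Big(-\frac{m t^2}{2(M_Q\sigma_Q + M_Q t/3)}\Big).
\]
Next I would take a union bound over the $|\sF|$ members of $\sF$, which contributes a factor $|\sF|$ inside the exponential, i.e. an additive $\log|\sF| \le H$ term in the exponent. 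Choosing the threshold of the form $t_0 = c_1(\sigma_Q + M_Q H/m)$ with $c_1$ a suitable absolute constant makes the right-hand side summable/small; more precisely, for $t \ge t_0$ the bound $|\sF|\cdot 2\exp(-m t^2/(2(M_Q\sigma_Q + M_Q t/3)))$ decays geometrically in $t/t_0$.

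Then I would write $\E\max_{f\in\sF}|\frac{1}{m}\sum Q_i(f)| = \int_0^\infty \IP(\max_{f}|\cdot| > t)\,dt \le t_0 + \int_{t_0}^\infty \IP(\max_f |\cdot| > t)\,dt$, and bound the remaining integral by a constant multiple of $t_0$ using the tail estimate above; this yields \reff{bernstein_1moment_expectationresult} with a new universal constant $c$. A small bookkeeping point: one should split the two regimes in the Bernstein denominator (the ``sub-Gaussian'' regime where $M_Q\sigma_Q$ dominates and the ``sub-exponential'' regime where $M_Q t/3$ dominates) so that the tail integral is handled by a Gaussian-type bound near $t_0$ and an exponential-type bound for large $t$; in each regime the integral is $O(t_0)$.

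I do not expect a genuine obstacle here — this is a textbook maximal inequality — but the one place requiring minor care is the variance bound: we are only given a first-moment (i.e. $L^1$) control $\frac{1}{m}\sum\|Q_i(f)\|_1 \le \sigma_Q$ rather than an $L^2$ bound, so the step $\E Q_i(f)^2 \le M_Q\|Q_i(f)\|_1$ (valid since $|Q_i(f)|\le M_Q$) is essential and must be invoked explicitly before applying Bernstein. Everything else — the union bound, the choice of $t_0$ proportional to $\sigma_Q + M_Q H/m$, and the tail integration — is routine, and the universal constant $c$ can be made explicit if desired.
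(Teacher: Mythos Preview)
Your proposal is correct and follows essentially the same approach as the paper: Bernstein's inequality for each fixed $f$, the key variance bound $\E Q_i(f)^2 \le M_Q\|Q_i(f)\|_1$, a union bound over $\sF$, and integration of the resulting tail. The only cosmetic difference is that the paper first obtains the bound $c_1\big(\sqrt{H}\,(\sigma_Q M_Q/m)^{1/2} + M_Q H/m\big)$ via the standard argument (citing Lemma~19.33 in \cite{Vaart98}) and then applies AM--GM to absorb the geometric mean into $\sigma_Q + M_Q H/m$, whereas you go directly to the threshold $t_0 \asymp \sigma_Q + M_Q H/m$.
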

\begin{proof}[Proof of Lemma \ref{lemma_elementarhoeffding}]
	Let $Q_i = Q_i(f)$. By Bernstein's inequality, we have for each $f\in \sF$ that
	\begin{eqnarray*}
		 \IP\Big(\Big|\frac{1}{m}\sum_{i=1}^{m}Q_i\Big| \ge x\Big) &\le& 2\exp\Big(-\frac{1}{2}\frac{x^2}{\frac{1}{m^2}\sum_{i=1}^{m}\|Q_i\|_2^2 + x \frac{M_Q}{m}}\Big)\\
		 &\le& 2\exp\Big(-\frac{1}{2}\frac{x^2}{\frac{M_Q}{m}\cdot \sigma_Q + x \frac{M_Q}{m}}\Big),
	\end{eqnarray*}
	where we used in the last step that $\|Q_i\|_2^2 = \E[Q_i^2] \le M_Q \|Q_i\|_1$.
	
	With standard arguments (cf. the proof of Lemma 19.33 in \cite{Vaart98}), we conclude that there exists some universal constant $c_1 > 0$ with
	\begin{eqnarray*}
		\E \max_{f \in \sF}\Big|\frac{1}{m}\sum_{i=1}^{m}Q_i(f)\Big| &\le& c_1 \Big( \sqrt{H}(\frac{\sigma_Q M_Q}{m})^{1/2} + \frac{M_Q H}{m}\Big).
	\end{eqnarray*}
	The result follows by using $(\frac{H\sigma_Q M_Q}{m})^{1/2} \le 2\frac{M_QH}{m} + 2\sigma_Q$.
\end{proof}

We now prove Theorem \ref{lemma_hoeffding_dependent} based on Lemma \ref{ass_general2} and Lemma \ref{lemma_elementarhoeffding} and the decomposition \reff{rosenthal_main_decomposition}.

\begin{proof}[Proof of Theorem \ref{lemma_hoeffding_dependent}]
  	We first discuss $A_2$. We have
	\begin{eqnarray*}
		\sum_{l=1}^{L}\E \max_{f\in\sF}\frac{1}{\frac{n}{\tau_l}}\Big|\sum_{1\le i \le \lfloor \frac{n}{\tau_l}\rfloor+1,i \text{ odd}}\frac{1}{\tau_l}T_{i,l}(f)\Big|.
	\end{eqnarray*}
	Since $\|W_{k,j}(f)-W_{k,j-1}(f)\|_1 \le 2\min\{\|W_k(f)\|_1, \delta^{W_k(f)}_1(j-1)\}$, we have for each $f\in \sF$,
	\begin{eqnarray}
		\frac{1}{\tau_l}\|T_{i,l}\|_1 &\le& \sum_{j=\tau_{l-1}+1}^{\tau_l}\frac{1}{\tau_l}\Big\|\sum_{k=(i-1)\tau_l+1}^{(i\tau_l) \wedge n}(W_{k,j} - W_{k,j-1})\Big\|_1\nonumber\\
		&\le& \sum_{j=\tau_{l-1}+1}^{\tau_l}\frac{1}{\tau_l}\sum_{k=(i-1)\tau_l+1}^{(i\tau_l) \wedge n}\Big\|W_{k,j} - W_{k,j-1}\Big\|_1\nonumber\\
		&\le& 2 \sum_{j=\tau_{l-1}+1}^{\tau_l}\frac{1}{\tau_l}\sum_{k=(i-1)\tau_l+1}^{(i\tau_l) \wedge n}\min\{\|W_k(f)\|_1, \delta_1^{W_k(f)}(j-1)\}\nonumber\\
		&\le& 2 \sum_{j=\tau_{l-1}+1}^{\tau_l}\min\{\frac{1}{\tau_l}\sum_{k=(i-1)\tau_l+1}^{(i\tau_l) \wedge n}\|W_k(f)\|_1, \frac{1}{\tau_l}\sum_{k=(i-1)\tau_l+1}^{(i\tau_l) \wedge n} \delta_1^{W_k(f)}(j-1)\}\nonumber\\
		&=& 2\sum_{j=\tau_{l-1}+1}^{\tau_l}\min\{\sigma_{i,l}, \Delta_{i,j,l}\},\nonumber
	\end{eqnarray}
	where
	\[
	    \sigma_{i,l} := \frac{1}{\tau_l}\sum_{k=(i-1)\tau_l+1}^{(i\tau_l) \wedge n}\|W_k(f)\|_1, \quad\quad \Delta_{i,j,l} := \frac{1}{\tau_l}\sum_{k=(i-1)\tau_l+1}^{(i\tau_l) \wedge n} \delta_1^{W_k(f)}(j-1).
	\]
	We conclude that
	\begin{eqnarray}
	   \frac{1}{\lfloor \frac{n}{\tau_l}\rfloor + 1}\sum_{i=1}^{\lfloor \frac{n}{\tau_l}\rfloor + 1} \frac{1}{\tau_l}\|T_{i,l}\|_1 &\le& 2\sum_{j=\tau_{l-1}+1}^{\tau_l}\min\{\frac{1}{\frac{n}{\tau_l}}\sum_{i=1}^{\lfloor \frac{n}{\tau_l}\rfloor + 1}\sigma_{i,l}, \frac{1}{\frac{n}{\tau_l}}\sum_{i=1}^{\lfloor \frac{n}{\tau_l}\rfloor + 1}\Delta_{i,j,l}\}\nonumber\\
	   &\le& \sum_{j=\tau_{l-1}+1}^{\tau_l}\min\{\frac{1}{n}\sum_{i=1}^{n}\|W_i(f)\|_1, \frac{1}{n}\sum_{i=1}^{n}\delta_1^{W_i}(j)\}.\label{lemma_hoeffding_dependent_boundz1}
	\end{eqnarray}
	
	Furthermore, it holds that
	\begin{equation}
	    \frac{1}{\tau_l}|T_{i,l}| \le 2\sup_i\|W_i(f)\|_{\infty} \le 2 \|f\|_{\infty}^2 \le 2M^2\label{lemma_hoeffding_dependent_boundz2}.
	\end{equation}
	
	By Lemma \ref{lemma_elementarhoeffding}, \reff{bernstein_1moment_expectationresult}, we have with some universal constant $c_1 > 0$ that
	\begin{eqnarray}
		\E A_2 &\le&  2c_1\sum_{l=1}^{L}\Big[ \sup_{f\in \sF}\Big(\frac{1}{\lfloor \frac{n}{\tau_l}\rfloor + 1}\sum_{i=1}^{\lfloor \frac{n}{\tau_l}\rfloor + 1}\frac{1}{\tau_l}\|T_{i,l}(f)\|_1\Big) + \frac{2M^2 H}{\lfloor \frac{n}{\tau_l}\rfloor + 1}\Big]\nonumber\\
		&\le& 2c_1\Big( \sum_{l=1}^{L}\sup_{f\in\sF}\sum_{j=\tau_{l-1}+1}^{\tau_l}\min\{\frac{1}{n}\sum_{i=1}^{n}\|W_i(f)\|_1, \frac{1}{n}\sum_{i=1}^{n}\delta_1^{W_i(f)}(j)\} + \frac{q M^2 H}{n}\Big).\label{lemma_hoeffding_dependent_eq1}
	\end{eqnarray}
	By Lemma \ref{ass_general2} and the Cauchy-Schwarz inequality for sums,
	\begin{eqnarray}
	    && \sum_{l=1}^{L}\sup_{f\in\sF}\sum_{j=\tau_{l-1}+1}^{\tau_l}\min\{\frac{1}{n}\sum_{i=1}^{n}\|W_i(f)\|_1, \frac{1}{n}\sum_{i=1}^{n}\delta_1^{W_i}(j)\}\nonumber\\
	    &\le& \sum_{l=1}^{L}\sup_{f\in\sF}\sum_{j=\tau_{l-1}+1}^{\tau_l}\min\{\frac{1}{n}\sum_{i=1}^{n}\|f(Z_i,\frac{i}{n})\|_2^2, \frac{2}{n}\sum_{i=1}^{n}D_{f,n}(\frac{i}{n})\|f(Z_i,\frac{i}{n})\|_2\cdot \Delta(j)\}\nonumber\\
	    &\le& \sum_{j=1}^{\infty}\min\{\sup_{f\in\sF} \|f\|_{2,n}^2, 2\ID_n \sup_{f\in\sF}\|f\|_{2,n}\cdot \Delta(j)\}\nonumber\\
	    &=& \sup_{f\in\sF}\|f\|_{2,n}\cdot \bar V(\sup_{f\in\sF}\|f\|_{2,n})\nonumber\\
	    &=& \sup_{f\in\sF}\big(\|f\|_{2,n}\cdot \bar V(\|f\|_{2,n})\big)\nonumber\\
	    &\le& \sup_{f\in\sF}\big[\|f\|_{2,n} V_n(f)\big],\label{lemma_hoeffding_dependent_eq15}
	\end{eqnarray}
	where
    \begin{equation}
    \bar V(x) = x +  \sum_{j=1}^{\infty}\min\{x,\ID_n \Delta(j)\}\label{definition_vbar}
    \end{equation} and in the second-to-last equality the fact that $x \mapsto x\cdot \bar V(x)$ is increasing in $x$.

    We also have $\|W_{i,0}(f) - \E W_{i,0}(f)\|_{\infty} \le 2\|f\|_{\infty}^2 \le 2M^2$ and $\|W_{i,0}(f) - \E W_{i,0}(f)\|_1 \le 2\|W_{i}(f)\|_1$. Thus by Lemma \ref{lemma_elementarhoeffding}, \reff{bernstein_1moment_expectationresult},
	\begin{eqnarray}
	    \E A_3 &\le& \E \max_{f\in \sF}\Big|\frac{1}{n}\sum_{i=1}^{n}(W_{i,0}(f) - \E W_{i,0}(f))\Big|\nonumber\\
		&\le& 2c_1\Big(\sup_{f\in\sF}\frac{1}{n}\sum_{i=1}^{n}\|W_i(f)\|_1 + \frac{M^2 H}{n}\Big) \\ 
		&\le& 2c_1\Big(\sup_{f\in\sF}\|f\|_{2,n}^2+ \frac{M^2 H}{n}\Big).\label{lemma_hoeffding_dependent_eq2}
	\end{eqnarray}
	
	Finally,
	\begin{eqnarray*}
		\E A_1 &\le& \sum_{j=q}^{\infty}\E \sup_{f\in \sF}\Big|\frac{1}{n}\sum_{i=1}^{n}(W_{i,j+1}(f) - W_{i,j}(f))\Big|\\
		&\le& \sum_{j=q}^{\infty}\frac{1}{n}\sum_{i=1}^{n}\big\| \sup_{f\in \sF}|W_{i,j+1}(f) - W_{i,j}(f)|\big\|_1.
	\end{eqnarray*}
	Since $|W_{i,j+1}(f) - W_{i,j}(f)| = |\E[W_i(f)^{**(i-j)} - W_i(f)^{**(i-j+1)}|\sA_i]| \le \E[ |W_i(f)^{**(i-j)} - W_i(f)^{**(i-j+1)}|\, |\sA_i]$ where we use the notation $H(\sF_i)^{**(i-j)} := H(\sF_i^{**(i-j)})$ and $\sF_i^{**(i-j)} = (\varepsilon_i,\varepsilon_{i-1},...,\varepsilon_{i-j},\varepsilon_{i-j-1}^{*},\varepsilon_{i-j-2}^{*},...)$., we have
	\begin{eqnarray}
		&&\big\| \sup_{f\in \sF}|W_{i,j+1}(f) - W_{i,j}(f)|\big\|_1\nonumber\\
		&\le& \big\| \E[ \max_{f\in \sF} |W_i(f)^{**(i-j)} - W_i(f)^{**(i-j+1)}|\, |\sA_i]\big\|_1\nonumber\\
		&\le& \big\| \sup_{f\in \sF} |W_i(f)^{**(i-j)} - W_i(f)^{**(i-j+1)}| \big\|_1\nonumber\\
		&=& \big\| \sup_{f\in \sF} |W_i(f) - W_i(f)^{*(i-j)}| \big\|_1 \le D_n^{\infty}(\frac{i}{n})^2C_{\Delta}\Delta(j),\label{lemma_hoeffding_dependent_eq3_partialresult}
	\end{eqnarray}
	which shows that
	\begin{equation}
		\E A_1 \le (\ID_n^{\infty})^2C_{\Delta}\beta(q).\label{lemma_hoeffding_dependent_eq3}
	\end{equation}
	
	Collecting the upper bounds \reff{lemma_hoeffding_dependent_eq1}, \reff{lemma_hoeffding_dependent_eq15},  \reff{lemma_hoeffding_dependent_eq2} and \reff{lemma_hoeffding_dependent_eq3}, we obtain that
	\begin{equation}
	    \IE \max_{f\in\sF}\Big|\frac{1}{n}S_n(f)\Big| \le (4c_1+1)\cdot \Big[\sup_{f\in\sF}\big[\|f\|_{2,n}V_n(f)\big] + (\ID_n^{\infty})^2 C_{\Delta}\beta(q) + \frac{qM^2H}{n}\Big].\label{lemma_hoeffding_dependent_eq4}
	\end{equation}
	
	By \reff{proposition_rosenthal_bound_implication3_bound2norm}, $V_n(f) \le \sigma$ implies $\|f\|_{2,n}^2 \le \ID_n r(\frac{\delta}{\ID_n})\|f\|_{2,n}$ and thus
	\[
	    \|f\|_{2,n} \le \ID_n r(\frac{\sigma}{\ID_n}),
	\]
	thus
	\begin{equation}
	    \sup_{f\in\sF}\big[\|f\|_{2,n} V_n(f)\big] \le \ID_n r(\frac{\sigma}{\ID_n}) \sigma.\label{lemma_hoeffding_dependent_eq5}
	\end{equation}
	Inserting \reff{lemma_hoeffding_dependent_eq5} into \reff{lemma_hoeffding_dependent_eq4}  yields the first assertion \reff{proposition_rosenthal_bound_mom1_result1} of the lemma.

	We now show \reff{proposition_rosenthal_bound_mom1_result2} with a case distinction. We abbreviate $q^{*} = q^{*}(\frac{M^2H}{n(\ID_n^{\infty})^2C_{\Delta}})$. If  $q^{*}\frac{H}{n} \le 1$, we have $q^{*} \in \{1,...,n\}$ and thus
    \begin{eqnarray}
        P &\le& c\Big(\ID_n r(\frac{\sigma}{\ID_n}) \sigma + (\ID_n^{\infty})^2C_{\Delta}\beta(q^{*}) + q^{*}\frac{M^2H}{n}\Big)\nonumber\\
        &\le& 2c\Big(\ID_n r(\frac{\sigma}{\ID_n}) \sigma + q^{*}\frac{M^2H}{n}\Big)\nonumber\\
        &=& 2c\Big(\ID_n r(\frac{\sigma}{\ID_n}) \sigma + M^2\cdot \min\Big\{q^{*}\frac{H}{n},1\Big\}\Big).\label{proposition_rosenthal_bound_mom1_result2_eq1}
    \end{eqnarray}
    If $q^{*}\frac{H}{n} \ge 1$, choose $q_0 = \lfloor \frac{n}{H}\rfloor \le \frac{n}{H}$. By simply bounding each summand with $M^2$, we have
    \begin{eqnarray}
        \E \max_{f\in \sF}\Big|\frac{1}{n}S_n(f)\Big| &\le& M^2 \le c\Big(\ID_n r(\frac{\sigma}{\ID_n}) \sigma + M^2\Big)\nonumber\\
        &\le& 2c\Big(\ID_n r(\frac{\sigma}{\ID_n}) \sigma + M^2\cdot \min\Big\{q^{*}\frac{H}{n},1\Big\}\Big).\label{proposition_rosenthal_bound_mom1_result2_eq2}
    \end{eqnarray}
    holds. Putting the two bounds \reff{proposition_rosenthal_bound_mom1_result2_eq1} and \reff{proposition_rosenthal_bound_mom1_result2_eq2} together, we obtain the result \reff{proposition_rosenthal_bound_mom1_result2}.
\end{proof}

The following lemma is an auxiliary result to prove Corollary \ref{lemma_hoeffding_dependent_rates} and Lemma \ref{lemma_chaining2}.

\begin{lem}\label{lemma_maximal_inequality_martingales_part1}
	Let $\sF$ be some finite class of functions. Let $R > 0$ be arbitrary and assume that $\sup_{f\in\sF}\|f\|_{\infty} \le M$. Then  there exists a universal constant $c > 0$ such that
	\begin{equation}
		\E \max_{f\in \sF}\big|\G_n^{(1)}(f)\big|\Ii_{\{R_n(f)^2 \le R^2\}} \le c\Big\{R \sqrt{H} + \frac{M H}{\sqrt{n}}\Big\},\label{lemma_maximal_inequality_martingales_eq1}
	\end{equation}
	where $H$ is defined by \reff{H_fix}.
\end{lem}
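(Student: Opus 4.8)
The plan is to read $\G_n^{(1)}(f)$ as a martingale in the time index and to apply a Freedman-type (martingale Bernstein) inequality for each fixed $f$, then to lift the resulting single-$f$ tail bound to a bound on $\IE\max_{f\in\sF}$ by a union bound over the finite class.

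First I would fix $f\in\sF$ and set $d_{i,f}:=\frac{1}{\sqrt n}\big(f(Z_i,\tfrac{i}{n})-\IE[f(Z_i,\tfrac{i}{n})\mid Z_{i-1}]\big)$, so that $\G_n^{(1)}(f)=\sum_{i=1}^n d_{i,f}$ and $(d_{i,f})$ is a martingale difference sequence for the filtration $\clF_i:=\sigma(Z_i)$ (an increasing filtration, since $Z_{i-1}$ is a measurable function of $Z_i$). The uniform bound $\sup_{f\in\sF}\|f\|_\infty\le M$ gives $|d_{i,f}|\le \frac{2M}{\sqrt n}=:b$ a.s., while the predictable quadratic variation satisfies
\[
\sum_{i=1}^n\IE[d_{i,f}^2\mid\clF_{i-1}] = \frac1n\sum_{i=1}^n\Var\big(f(Z_i,\tfrac{i}{n})\mid Z_{i-1}\big) \le \frac1n\sum_{i=1}^n\IE\big[f(Z_i,\tfrac{i}{n})^2\mid Z_{i-1}\big] = R_n(f)^2 .
\]
The one genuine obstacle is that the cut-off event $\{R_n(f)^2\le R^2\}$ is \emph{not} predictable, so $R^2$ cannot simply be substituted into a Bernstein bound. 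I would handle this by the standard stopped-clock device: the process $v_k:=\frac1n\sum_{i\le k}\IE[f(Z_i,\tfrac{i}{n})^2\mid Z_{i-1}]$ is predictable, hence $\tau_f:=\min\{k:v_{k+1}>R^2\}\wedge n$ is a stopping time, and on $\{R_n(f)^2\le R^2\}$ one has $\tau_f=n$ and $\sum_{i\le\tau_f}d_{i,f}=\G_n^{(1)}(f)$. Applying Freedman's maximal inequality to the stopped martingale (whose predictable quadratic variation is at most $R^2$ by construction and whose increments are bounded by $b$) then yields, for every $x>0$,
\[
\IP\big(|\G_n^{(1)}(f)|\ge x,\ R_n(f)^2\le R^2\big)\le 2\exp\Big(-\frac{x^2}{2(R^2+bx)}\Big).
\]

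Writing $Y_f:=|\G_n^{(1)}(f)|\,\Ii_{\{R_n(f)^2\le R^2\}}$, the last display is a uniform mixed Gaussian/exponential tail for the finitely many nonnegative variables $(Y_f)_{f\in\sF}$, with variance proxy $R^2$ and scale $b=2M/\sqrt n$. The remaining step is the routine bound on the expected maximum of a finite family with such a tail: using $\IE\max_f Y_f=\int_0^\infty\IP(\max_f Y_f>x)\,dx$, bounding $\IP(\max_f Y_f>x)\le\min\{1,\,2|\sF|\exp(-x^2/(2(R^2+bx)))\}$, and splitting the integral at $x\asymp R^2/b$ (where the sub-Gaussian regime crosses over into the sub-exponential one) — the same computation as in the proof of Lemma~19.33 of \cite{Vaart98}, already invoked for Lemma~\ref{lemma_elementarhoeffding} — gives
\[
\IE\max_{f\in\sF}Y_f\le c\big(R\sqrt{\log(1+|\sF|)}+b\log(1+|\sF|)\big)\le c'\Big(R\sqrt H+\frac{MH}{\sqrt n}\Big),
\]
with $H=1\vee\log|\sF|$ as in \reff{H_fix} (the $1\vee(\cdot)$ absorbing the case $|\sF|=1$) and the universal constants collected into $c'$. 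This is exactly \reff{lemma_maximal_inequality_martingales_eq1}, so the only place where real care is needed is the non-adaptedness of $\{R_n(f)^2\le R^2\}$, circumvented by the stopping-time argument above; everything else is bookkeeping.
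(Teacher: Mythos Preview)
Your proof is correct and follows essentially the same route as the paper: obtain a Bernstein-type tail bound for the martingale $\G_n^{(1)}(f)$ on the event $\{R_n(f)^2\le R^2\}$, then convert it to an expected-maximum bound over the finite class via the standard calculation behind Lemma~19.33 in \cite{Vaart98}. The paper simply cites Theorem~3.3 of \cite{pinelis1994} for the tail bound, which already incorporates the stopped-clock device you spell out (the event $\{\langle M\rangle_n\le R^2\}$ appears directly in Pinelis' statement), so your worry about non-adaptedness is legitimate but is exactly what that reference packages for you.
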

\begin{proof}[Proof of Lemma \ref{lemma_maximal_inequality_martingales_part1}]
By Theorem 3.3 in \cite{pinelis1994}, it holds for $x,a > 0$ and a measurable function $f$ that
\[
    \IP\Big(\big|\G_n^{(1)}(f)\big| \ge x, R_n(f)^2 \le R^2\Big) \le 2\exp\Big(-\frac{1}{2}\frac{x^2}{R^2 + \frac{2\|f\|_{\infty}x}{3\sqrt{n}})}\Big).
\]
Using standard arguments (cf. the proof of Lemma 19.33 in \cite{Vaart98}), we obtain \reff{lemma_maximal_inequality_martingales_eq1}.
\end{proof}

\begin{proof}[Proof of Corollary \ref{lemma_hoeffding_dependent_rates}]
Let us define the following functions first.

For $m > 0$, define $\varphi_m^{\wedge}:\R \to \R$ and the corresponding ``peaky'' residual function $\varphi_m^{\vee}:\R \to \R$ via  
\[
    \varphi_{m}^{\wedge}(x) := (x\vee (-m))\wedge m, \quad\quad 
    \varphi_m^{\vee}(x) := x - \varphi_m^{\wedge}(x).
\]

    Now, let $Q \ge 1$, and $\sigma := \sup_{n\in\N}\sup_{f\in\sF}V_n(f) < \infty$. Put
    \[
        M_n = \frac{\sqrt{n}}{\sqrt{H}}r\big(\frac{\sigma Q^{1/2}}{\ID_n^{\infty}}\big) \ID_n^{\infty}.
    \]
    Let $F(z,u) := D_n^{\infty}(u)\cdot \bar F(z,u)$, (recall $\bar F = \sup_{f\in\sF}\bar f$). Then
    \begin{eqnarray}
        \IP\Big(\max_{f\in\sF}|\G_n(f)| > Q \sqrt{H}\Big) &\le& \IP\Big(\max_{f\in\sF}|\G_n(f)| > Q \sqrt{H}, \sup_{i=1,...,n}F(Z_i,\frac{i}{n}) \le M_n\Big)\nonumber\\
        &&\quad\quad + \IP\Big(\sup_{i=1,...,n}F(Z_i,\frac{i}{n}) > M_n\Big)\nonumber\\
        &\le& \IP\Big(\max_{f\in\sF}|\G_n(\varphi_{M_n}^{\wedge}(f))| > \frac{Q \sqrt{H}}{2}\Big)\nonumber\\
        &&\quad\quad + \IP\Big(\frac{1}{\sqrt{n}}\max_{f\in\sF}\big|\sum_{i=1}^{n}\IE[f(Z_i,\frac{i}{n})\Ii_{\{|f(Z_i,\frac{i}{n})| > M_n\}}]\big| > \frac{Q \sqrt{H}}{2}\Big)\nonumber\\
        &&\quad\quad + \IP\Big(\sup_{i=1,...,n}F(Z_i,\frac{i}{n}) > M_n\Big).\label{lemma_hoeffding_dependent_rates_eq1}
    \end{eqnarray}
    For the first summand in \reff{lemma_hoeffding_dependent_rates_eq1}, we use the decomposition
    \begin{eqnarray}
        &&\IP\Big(\max_{f\in\sF}|\G_n(\varphi_{M_n}^{\wedge}(f))| > \frac{Q \sqrt{H}}{2}\Big)\nonumber\\
        &\le& \IP\Big(\max_{f\in\sF}|\G_n^{(1)}(\varphi_{M_n}^{\wedge}(f))| > \frac{Q \sqrt{H}}{4}\Big) + \IP\Big(\max_{f\in\sF}|\G_n^{(2)}(\varphi_{M_n}^{\wedge}(f))| > \frac{Q \sqrt{H}}{4}\Big)\nonumber\\
        &\le& \IP\Big(\max_{f\in\sF}|\G_n^{(1)}(\varphi_{M_n}^{\wedge}(f))| > \frac{Q \sqrt{H}}{4}, \, \max_{f\in\sF}R_n(\varphi_{M_n}^{\wedge}(f))^2 \le \sigma^2\Big)\nonumber\\
        &&\quad\quad + \IP\Big(\max_{f\in\sF}R_n(\varphi_{M_n}^{\wedge}(f)) > \sigma^2\Big)\nonumber\\
        &&\quad\quad + \IP\Big(\max_{f\in\sF}|\G_n^{(2)}(\varphi_{M_n}^{\wedge}(f))| > \frac{Q \sqrt{H}}{4}\Big).\label{lemma_hoeffding_dependent_rates_eq2}
    \end{eqnarray}
    We now discuss the three terms separately. By Lemma \ref{lemma_maximal_inequality_martingales_part1}, we have
    \begin{eqnarray*}
        &&\IP\Big(\max_{f\in\sF}|\G_n^{(1)}(\varphi_{M_n}^{\wedge}(f))| > \frac{Q \sqrt{H}}{4}, \, \max_{f\in\sF}R_n(\varphi_{M_n}^{\wedge}(f))^2 \le Q^{3/2}\sigma^2\Big)\\
        &\le& \frac{4c}{Q\sqrt{H}}\Big[\sigma Q^{3/4}\sqrt{H} + \frac{M_n H}{\sqrt{n}}\Big] \le \frac{4c}{Q\sqrt{H}}\Big[\sigma Q^{3/4}\sqrt{H} + \sigma \sqrt{H}Q^{1/2}\Big] \le \frac{8c}{Q^{1/4}}.
    \end{eqnarray*}
    By Theorem \ref{lemma_hoeffding_dependent} and \reff{rule_submult_qstar},
    \begin{eqnarray*}
        &&\IP\Big(\max_{f\in \sF}R_n(\varphi_{M_n}^{\wedge}(f))^2 > Q^{3/2}\sigma^2\Big)\\
        &\le& \frac{2c}{\sigma^2 Q^{3/2}}\Big[\ID_n r(\frac{\sigma}{\ID_n})\sigma + q^{*}\Big(\frac{M^2 H}{n(\ID_n^{\infty})^2 C_{\Delta}}\Big)\frac{M^2 H}{n}\Big]\\
        &\le& \frac{2c}{\sigma^2 Q^{3/2}}\Big[\sigma^2 + q^{*}\Big(\frac{r(\frac{\sigma Q^{1/2}}{\ID_n^{\infty}})^2}{C_{\Delta}}\Big)r(\frac{\sigma Q^{1/2}}{\ID_n^{\infty}})^2 (\ID_n^{\infty})^2\Big]\\
        &\le& \frac{2c}{\sigma^2 Q^{3/2}}\Big[\sigma^2 + q^{*}\Big(C_{\Delta}^{-1}C_{\beta}^{-2}\big)\cdot \Big[q^{*}\Big(r(\frac{\sigma Q^{1/2}}{\ID_n^{\infty}})\Big)r(\frac{\sigma Q^{1/2}}{\ID_n^{\infty}})\Big]^2 (\ID_n^{\infty})^2\Big]\\
        &\le& \frac{2c}{\sigma^2 Q^{3/2}}\Big[\sigma^2 + q^{*}\Big(C_{\Delta}^{-1}C_{\beta}^{-2}\big)\sigma^2 Q\Big]|\\
        &\le& \frac{2c}{Q^{1/2}}\big[1 + q^{*}\Big(C_{\Delta}^{-1}C_{\beta}^{-2}\big)\big]
    \end{eqnarray*} for $C_{\Delta}$ defined in Lemma \ref{depend_trans_2}.

    By \cite[Theorem 4.1]{empproc} applied to $W_i(f) = \IE[f(Z_i,\frac{i}{n})|Z_{i-1}]$,
    \begin{eqnarray*}
        && \IP\Big(\max_{f\in\sF}|\G_n^{(2)}(\varphi_{M_n}^{\wedge}(f))| > \frac{Q \sqrt{H}}{4}\Big)\\
        &\le& \frac{8c}{Q\sqrt{H}}\cdot \Big[\sigma \sqrt{H} + q^{*}\Big(r(\frac{\sigma Q^{1/2}}{\ID_n^{\infty}})\Big)r(\frac{\sigma Q^{1/2}}{\ID_n^{\infty}}) \ID_n^{\infty}\Big]\\
        &\le& \frac{8c}{Q\sqrt{H}}\big[\sigma \sqrt{H} + \sigma Q^{1/2}\sqrt{H}\big] \le \frac{16c \sigma}{Q^{1/2}}.
    \end{eqnarray*}
    Inserting the upper bounds into \reff{lemma_hoeffding_dependent_rates_eq2}, we obtain
    \[
        \IP\Big(\max_{f\in\sF}|\G_n(\varphi_{M_n}^{\wedge}(f))| > \frac{Q \sqrt{H}}{2}\Big) \le \frac{8c}{Q^{1/4}} + \frac{2c}{Q^{1/2}}\big[1 + q^{*}\Big(C_{\Delta}^{-1}C_{\beta}^{-2}\big)\big] + \frac{16c \sigma}{Q^{1/2}} \to 0
    \]
    for $Q \to \infty$. The second and third summand in \reff{lemma_hoeffding_dependent_rates_eq1} were already discussed in the proof of \cite[Corollary 4.3]{empproc} (equation (7.34) and (7.35) therein; note especially that we only need there that $\|\bar F(Z_i,\frac{i}{n})\|_{\nu_2} \le C_{\bar F,n}$ instead of $C_{\Delta}$ which is part of the assumptions), and converge to $0$ for $Q \to \infty$ under the given assumptions.
\end{proof}

The following Lemma \ref{lemma_chaining2} is used to prove Theorem \ref{thm_martingale_equicont}.

\begin{lem}[Compatibility lemma 2]\label{lemma_chaining2}
    Let $\psi:(0,\infty)\to[1,\infty)$ be some function and $k\in\N$, $\delta > 0$. If $\sF$ fulfills $|\sF| \le k$ and Assumptions \ref{ass2}, \ref{ass3}, then there exists some universal constant $c > 0$ such that the following holds: If $\sup_{f\in \sF}V_n(f) \le \delta$ and $\sup_{f\in \sF}\|f\|_{\infty} \le m(n,\delta,k)$, then
    \begin{eqnarray}
        \E \max_{f\in \sF}\big|\G_n^{(1)}(f)\big|\Ii_{\{R_n(f) \le 2\delta \psi(\delta)\}} &\le& 2c(1+\frac{\ID_n^{\infty}}{\ID_n})\cdot \psi(\delta)\delta \sqrt{H(k)},\label{lemma_chaining2_res1}\\
        \IP\Big(\sup_{f\in\sF}R_n(f) > 2\delta\psi(\delta)\Big) &\le& \frac{2c(1+q^{*}\big(C_{\Delta}^{-1}C_{\beta}^{-2}\big)(\frac{\ID_n^{\infty}}{\ID_n})^2)}{\psi(\delta)^2}.\label{lemma_chaining2_res3}
    \end{eqnarray}
\end{lem}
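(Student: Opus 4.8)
The plan is to obtain both estimates directly from the two tools already in place: Lemma~\ref{lemma_maximal_inequality_martingales_part1} for the martingale part on the event that the conditional variance is controlled, and Theorem~\ref{lemma_hoeffding_dependent} (in the form \reff{proposition_rosenthal_bound_mom1_result2}) for the fluctuation of $R_n^2(\cdot)$, after unwinding the definition $m(n,\delta,k)=r(\frac{\delta}{\ID_n})\,\ID_n^{\infty}n^{1/2}/H(k)^{1/2}$ and using the elementary facts $r(x)\le x$, $q^{*}(x)\ge 1$, $\psi\ge 1$, together with $H(|\sF|)\le H(k)$ (which holds since $|\sF|\le k$).

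\emph{Proof of \reff{lemma_chaining2_res1}.} Since $\{R_n(f)\le 2\delta\psi(\delta)\}=\{R_n(f)^2\le(2\delta\psi(\delta))^2\}$, I would apply Lemma~\ref{lemma_maximal_inequality_martingales_part1} with $R:=2\delta\psi(\delta)$ and $M:=m(n,\delta,k)$, which is legitimate because $\sup_{f\in\sF}\|f\|_{\infty}\le m(n,\delta,k)$. This produces a bound of order $R\sqrt{H(k)}+M H(k)/\sqrt n$. The first term equals $2\delta\psi(\delta)\sqrt{H(k)}$, and in the second term the definition of $m(n,\delta,k)$ cancels $n^{1/2}/\sqrt n$ and one power of $H(k)$, leaving $r(\frac{\delta}{\ID_n})\ID_n^{\infty}H(k)^{1/2}\le\frac{\ID_n^{\infty}}{\ID_n}\delta\,\psi(\delta)\,H(k)^{1/2}$ by $r(\frac{\delta}{\ID_n})\le\frac{\delta}{\ID_n}$ and $\psi\ge1$. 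Adding the two contributions and using $2+t\le 2(1+t)$ for $t\ge0$ gives \reff{lemma_chaining2_res1}.

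\emph{Proof of \reff{lemma_chaining2_res3}.} Decompose $R_n^2(f)=\E R_n^2(f)+\big(R_n^2(f)-\E R_n^2(f)\big)$, with $\E R_n^2(f)=\|f\|_{2,n}^2$. From $V_n(f)\le\delta$ and the relation $V_n(f)\le\sigma\Rightarrow\|f\|_{2,n}\le\ID_n r(\frac{\sigma}{\ID_n})$ (cf.\ \reff{proposition_rosenthal_bound_implication3_bound2norm}) together with $r(x)\le x$, one gets $\E R_n^2(f)\le\delta^2$; since $\psi\ge1$ this yields
\[
    \Big\{\sup_{f\in\sF}R_n(f)>2\delta\psi(\delta)\Big\}\subseteq\Big\{\max_{f\in\sF}\big|R_n^2(f)-\E R_n^2(f)\big|>3\delta^2\psi(\delta)^2\Big\}.
\]
Bounding the right-hand probability by Markov's inequality and \reff{proposition_rosenthal_bound_mom1_result2} with $\sigma=\delta$ and $M=m(n,\delta,k)$, the first term there is $\ID_n r(\frac{\delta}{\ID_n})\delta\le\delta^2$, and for the second term one has $\frac{m(n,\delta,k)^2H(|\sF|)}{n}\le r(\frac{\delta}{\ID_n})^2(\ID_n^{\infty})^2$ and $\frac{m(n,\delta,k)^2H(|\sF|)}{n(\ID_n^{\infty})^2C_{\Delta}}\le\frac{r(\delta/\ID_n)^2}{C_{\Delta}}$. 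Using that $x\mapsto x\,q^{*}(x)$ is non-decreasing up to a factor $2$ (which follows from the definition of $q^{*}$ by comparing its value with that at the next smaller integer), the second term is at most $2\,q^{*}\big(\frac{r(\delta/\ID_n)^2}{C_{\Delta}}\big)\,r(\frac{\delta}{\ID_n})^2(\ID_n^{\infty})^2$. Finally, by the submultiplicativity consequence of Assumption~\ref{ass3} (the estimate \reff{rule_submult_qstar}), $q^{*}\big(\frac{r^2}{C_{\Delta}}\big)\le q^{*}(C_{\Delta}^{-1}C_{\beta}^{-2})\,q^{*}(r)^2$ with $r=r(\frac{\delta}{\ID_n})$, while $q^{*}(r)\,r\le\frac{\delta}{\ID_n}$ by definition of $r(\cdot)$; hence the second term is $\le 2\,q^{*}(C_{\Delta}^{-1}C_{\beta}^{-2})\big(\frac{\ID_n^{\infty}}{\ID_n}\big)^2\delta^2$. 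Combining, dividing by $3\delta^2\psi(\delta)^2$, and renaming the universal constant gives \reff{lemma_chaining2_res3}.

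\emph{Main obstacle.} No individual step is deep; the only genuinely delicate point is the $q^{*}$-term in the second part. It is essential that the truncation level be taken exactly equal to $m(n,\delta,k)$, so that both $M^2H/n$ and the argument of $q^{*}$ reduce to multiples of $r(\frac{\delta}{\ID_n})^2$, and one then has to push $q^{*}$ through using (i) the approximate monotonicity of $x\mapsto x\,q^{*}(x)$ and (ii) the submultiplicativity \reff{rule_submult_qstar} coming from Assumption~\ref{ass3}, together with the defining inequality $q^{*}(r(t))\,r(t)\le t$. It then remains to check that all universal constants accumulated along the way are absorbed into the prefactors $2c(1+\ID_n^{\infty}/\ID_n)$ and $2c(1+q^{*}(C_{\Delta}^{-1}C_{\beta}^{-2})(\ID_n^{\infty}/\ID_n)^2)$.
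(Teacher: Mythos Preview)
Your proposal is correct and follows essentially the same route as the paper: Lemma~\ref{lemma_maximal_inequality_martingales_part1} for \reff{lemma_chaining2_res1}, and Markov's inequality combined with Theorem~\ref{lemma_hoeffding_dependent} (form \reff{proposition_rosenthal_bound_mom1_result2}) plus the submultiplicativity \reff{rule_submult_qstar} and the identity $q^{*}(r(t))\,r(t)\le t$ for \reff{lemma_chaining2_res3}. One small point where you are actually \emph{more} careful than the paper: since $|\sF|\le k$ may be strict, the quantity $M^2H(|\sF|)/n$ arising from \reff{proposition_rosenthal_bound_mom1_result2} equals $r(\delta/\ID_n)^2(\ID_n^{\infty})^2\cdot H(|\sF|)/H(k)$ rather than $r(\delta/\ID_n)^2(\ID_n^{\infty})^2$ exactly, and your invocation of the (correct) fact that $x\mapsto x\,q^{*}(x)$ is non-decreasing up to a factor $2$ closes this gap, which the paper's proof passes over silently.
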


\begin{proof}[Proof of Lemma \ref{lemma_chaining2}]
     By Lemma \ref{lemma_maximal_inequality_martingales_part1} and since $r(a) \le a$ (cf. \cite[Lemma 7.5]{empproc}), 
    \begin{eqnarray*}
        \E \max_{f\in \sF}\big|\G_n^{(1)}(f)\big|\Ii_{\{R_n(f) \le 2\delta \psi(\delta)\}} &\le& c\Big\{2\psi(\delta)\delta\sqrt{H(k)} + \frac{m(n,\delta,k) H(k)}{\sqrt{n}}\Big\}\\
        &\le& 2c \cdot \big[\psi(\delta)\cdot \delta + \ID_n^{\infty}r(\frac{\delta}{\ID_n})\big]\sqrt{H(k)}\\
        &\le& 2c \cdot (1 + \frac{\ID_n^{\infty}}{\ID_n})\cdot  \psi(\delta) \delta \sqrt{H(k)},
    \end{eqnarray*}
    which shows \reff{lemma_chaining2_res1}.
    
    For $\hat a = \arg\min_{j\in\N}\big\{\|f\|_{2,n} \cdot j + \ID_n\beta(j)\big\}$
    and since $\|f\|_{2,n} \le V_n(f) \le \delta$ we have with $r(\frac{\delta}{\ID_n}) \ge \frac{\delta}{\ID_n\hat a}$,
    \begin{equation}
        \frac{\|f\|_{2,n}^2}{\ID_n^{\infty} r(\frac{\delta}{\ID_n})} \le \frac{\ID_n\hat a \|f\|_{2,n}^2}{\ID_n^{\infty}\delta} \le \frac{\ID_n V_n(f) \|f\|_{2,n}}{\ID_n^{\infty}\delta} \le \frac{\ID_n}{\ID_n^{\infty}}\|f\|_{2,n}. \label{proposition_rosenthal_bound_implication3_bound2norm}
    \end{equation} Therefore, $\|f\|_{2,n}^2 \le \ID_n r(\frac{\delta}{\ID_n})\|f\|_{2,n}$ and thus $\|f\|_{2,n} \le \ID_n r(\frac{\delta}{\ID_n})$. Note that due to $r(a) \le a$,
    \begin{eqnarray}
        \IE R_n(f)^2 &=& \frac{1}{n}\sum_{i=1}^{n}\IE[f(Z_i,\frac{i}{n})^2] \le \|f\|_{2,n}^2 \le (\ID_n r(\frac{\delta}{\ID_n}))^2 \le \delta^2.\label{bound_r}
    \end{eqnarray}
    Recall that $\beta_{norm}(q) = \frac{\beta(q)}{q}$. By Assumption \ref{ass3}, we have that for any $x_1,x_2 > 0$, $\tilde q = q^{*}(x_1)q^{*}(x_2)$ satisfies
    \[
        \beta_{norm}(\tilde q) \le C_{\beta} \beta_{norm}(q^{*}(x_1)) \beta_{norm}(q^{*}(x_2)) \le C_{\beta} x_1 x_2.
    \]
    Thus, by definition of $q^{*}$,
    \begin{equation}
        q^{*}(C_{\beta}x_1 x_2) \le q^{*}(x_1) q^{*}(x_2).\label{rule_submult_qstar}
    \end{equation}
    We obtain that
    \begin{equation}
        q^{*}\Big(r(\frac{\delta}{\ID_n})^2 \frac{1}{C_{\Delta}}\Big) \le q^{*}\Big(r(\frac{\delta}{\ID_n})\Big)^2 q^{*}\big(C_{\Delta}^{-1}C_{\beta}^{-2}\big).\label{bound_qstar}
    \end{equation}
    By \reff{bound_r}, Markov's inequality,  Theorem \ref{lemma_hoeffding_dependent} and \reff{bound_qstar},
    \begin{eqnarray*}
        && \IP\Big(\sup_{f\in\sF}R_n(f)^2 > 2\psi(\delta)^2 \delta^2\Big)\\
        &\le& \IP\Big(\sup_{f\in\sF}|R_n(f)^2 - \E R_n(f)^2| > \psi(\delta)^2 \delta^2\Big)\\
        &\le& \frac{2c}{\psi(\delta)^2 \delta^2}\cdot \Big[\ID_n r(\frac{\delta}{\ID_n})\delta + q^{*}\Big(r(\frac{\delta}{\ID_n})^2 \frac{1}{C_{\Delta}}\Big)r(\frac{\delta}{\ID_n})^2 (\ID_n^{\infty})^2\Big]\\
        &\le& \frac{2c}{ \psi(\delta)^2 \delta^2}\cdot \Big[ \delta^2 + \Big[q^{*}\Big(r(\frac{\delta}{\ID_n})\Big)r(\frac{\delta}{\ID_n})\Big]^2 q^{*}\big(C_{\Delta}^{-1}C_{\beta}^{-2}\big)(\ID_n^{\infty})^2\Big]\\
        &\le& \frac{2c}{ \psi(\delta)^2 \delta ^2}\cdot \Big[ \delta^2 + \delta^2 q^{*}\big(C_{\Delta}^{-1}C_{\beta}^{-2}\big)(\frac{\ID_n^{\infty}}{\ID_n})^2\Big]\\
        &\le& \frac{2c(1+q^{*}\big(C_{\Delta}^{-1}C_{\beta}^{-2}\big)(\frac{\ID_n^{\infty}}{\ID_n})^2)}{\psi(\delta)^2},
    \end{eqnarray*}
    which shows \reff{lemma_chaining2_res3}.
\end{proof}

\begin{proof}[Proof of Theorem \ref{thm_martingale_equicont}]
	In the following, we abbreviate $\IH(\delta) = \IH(\delta,\sF, V)$ and $\N(\delta) = \N(\delta,\sF, V)$. The proof follows the lines of \cite[Theorem 4.4]{empproc}. We present it here for completeness. Recall again that for $m > 0$, $\varphi_m^{\wedge}:\R \to \R$ and the corresponding ``peaky'' residual function $\varphi_m^{\vee}:\R \to \R$ via  
\[
    \varphi_{m}^{\wedge}(x) := (x\vee (-m))\wedge m, \quad\quad 
    \varphi_m^{\vee}(x) := x - \varphi_m^{\wedge}(x).
\]
	
	We choose $\delta_0 = \sigma$ and $\delta_j = 2^{-j}\delta_0$, and
    \[
        m_j = \frac{1}{2}m(n,\delta_j,N_{j+1}),
    \]
    as well as $M_n = \frac{1}{2} m_0$. We then use
    \begin{equation}
        \E \sup_{f\in \sF}\Big|\G_n^{(1)}(f)\Big| \le \E \sup_{f\in \sF(M_n)}\Big|\G_n^{(1)}(f)\Big| + \frac{1}{\sqrt{n}}\sum_{i=1}^{n}\E\big[F(Z_i)\Ii_{\{F(Z_i) > M_n\}}\big],\label{thm_martingale_equicont_part1}
    \end{equation}
    where $\sF(M_n) := \{\varphi_{M_n}^{\wedge}(f):f\in \sF\}$.
    
   We construct a nested sequence of partitions $(\sF_{jk})_{k=1,...,N_j}$, $j\in\N$ of $\sF(M_n)$ (where $N_j := \N(\delta_0)\cdot ... \cdot \N(\delta_j)$), and a sequence $\Delta_{jk}$ of measurable functions such that
    \[
        \sup_{f,g\in \sF_{jk}}|f-g| \le \Delta_{jk}, \quad\quad V(\Delta_{jk}) \le \delta_j.
    \]
   In each $\sF_{jk}$, we fix some $f_{jk} \in \sF$, and define $\pi_{j}f := f_{j,\psi_j f}$ where $\psi_{j}f := \min\{i \in \{1,...,N_j\}: f \in \sF_{ji}\}$, and put $\Delta_{j}f := \Delta_{j,\psi_j f}$, and
    \[
        I(\sigma) := \int_0^{\sigma}\psi(\varepsilon)\sqrt{1 \vee \IH(\varepsilon,\sF, V)} d \varepsilon,
    \]
    as well as
    \begin{equation}
        \tau := \min\Big\{j \ge 0: \delta_j \le \frac{I(\sigma)}{\sqrt{n}}\Big\} \vee 1.\label{definition_stoppingtime_chaining_2}
    \end{equation}
    
    For functions $f,g$ with $|f| \le g$, it holds that
	\begin{eqnarray*}
	    |\G_n^{(1)}(f)| &\le& |\G_n^{(1)}(g)| + 2\sqrt{n}\cdot \frac{1}{n}\sum_{i=1}^{n}\IE[g(Z_i,\frac{i}{n})|Z_{i-1}]\\
	    &\le& |\G_n^{(1)}(g)| + 2|\G_n^{(2)}(g)| + 2\sqrt{n}\cdot \frac{1}{n}\sum_{i=1}^{n}\IE[g(Z_i,\frac{i}{n})]\\
	    &\le& |\G_n^{(1)}(g)| + 2|\G_n^{(2)}(g)| + 2\sqrt{n}\|g\|_{1,n}.
	\end{eqnarray*}
	
	Using a similar approach as in \cite[Section 7.2, equations (7.8) and (7.9)]{empproc} applied to $W_i(f) = f(Z_i,\frac{i}{n}) - \E[f(Z_i,\frac{i}{n})|Z_{i-1}]$, and the fact that $\|f - \pi_0 f\|_{\infty} \le 2M_n \le m_0$, we have the decomposition
\begin{eqnarray}
    \sup_{f\in\sF}|\G_n^{(1)}(f)| &\le& \sup_{f\in\sF}|\G_n^{(1)}(\pi_0 f)|\nonumber\\
    &&\quad\quad + \sup_{f\in \sF}|\G_n^{(1)}(\varphi_{m_\tau}^{\wedge}(f-\pi_\tau f))| + \sum_{j=0}^{\tau-1}\sup_{f\in\sF}\Big|\G_n^{(1)}(\varphi_{m_j-m_{j+1}}^{\wedge}(\pi_{j+1}f - \pi_j f))\Big|\nonumber\\
    &&\quad\quad\quad\quad+ \sum_{j=0}^{\tau-1}\sup_{f\in\sF}|\G_n^{(1)}(R(j))|\nonumber\\
    &\le& \sup_{f\in\sF}|\G_n^{(1)}(\pi_0 f)|\nonumber\\
    &&\quad\quad + \Big\{\sup_{f\in\sF}|\G_n^{(1)}(\varphi_{m_\tau}^{\wedge}(\Delta_\tau f))| + 2\sup_{f\in\sF}|\G_n^{(2)}(\varphi_{m_\tau}^{\wedge}(\Delta_\tau f))|\nonumber\\
    &&\quad\quad\quad\quad\quad\quad+ 2\sqrt{n}\sup_{f\in \sF}\|\Delta_\tau f\|_{1,n}\Big\}\nonumber\\
    &&\quad\quad + \sum_{j=0}^{\tau-1}\sup_{f\in\sF}\Big|\G_n^{(1)}(\varphi_{m_j-m_{j+1}}^{\wedge}(\pi_{j+1}f - \pi_j f))\Big|\nonumber\\
    &&\quad\quad + \sum_{j=0}^{\tau-1}\Big\{\sup_{f\in\sF}\Big|\G_n^{(1)}(\min\big\{\big|\varphi_{m_{j+1}}^{\vee}(\Delta_{j+1}f)\big|,2m_j\big\})\Big|\nonumber\\
    &&\quad\quad\quad\quad\quad\quad\quad\quad\quad\quad + 2\sup_{f\in\sF}\Big|\G_n^{(2)}(\min\big\{\big|\varphi_{m_{j+1}}^{\vee}(\Delta_{j+1}f)\big|,2m_j\big\})\Big|\nonumber\\
    &&\quad\quad\quad\quad\quad\quad\quad\quad\quad\quad+ 2\sqrt{n}\sup_{f\in\sF}\| \Delta_{j+1}f \Ii_{\{\Delta_{j+1}f > m_{j+1}\}}\|_{1,n}\Big\}\nonumber\\
    &&\quad\quad + \sum_{j=0}^{\tau-1}\Big\{\sup_{f\in\sF}\Big|\G_n^{(1)}(\min\big\{\big|\varphi_{m_j-m_{j+1}}^{\vee}(\Delta_{j}f)\big|,2m_j\big\})\Big|\nonumber\\
    &&\quad\quad\quad\quad\quad\quad\quad\quad\quad\quad + 2\sup_{f\in\sF}\Big|\G_n^{(2)}(\min\big\{\big|\varphi_{m_j-m_{j+1}}^{\vee}(\Delta_{j}f)\big|,2m_j\big\})\Big|\nonumber\\
    &&\quad\quad\quad\quad\quad\quad\quad\quad\quad\quad + 2\sqrt{n}\sup_{f\in\sF}\| \Delta_{j}f \Ii_{\{\Delta_{j}f > m_j - m_{j+1}\}}\|_{1,n}\Big\}\label{truncation_decomposition_final_martingale}
\end{eqnarray}

    We have for $f\in \sF(M_n)$,
    \begin{eqnarray}
        \pi_0 f &=& \varphi^{\wedge}_{2M_n}(\pi_0 f),\nonumber\\
        \varphi^{\wedge}_{m_{\tau}}(\Delta_{\tau}f) &\le& \min\{\Delta_{\tau}f, 2m_{\tau}\},\nonumber\\
        \varphi^{\wedge}_{m_j - m_{j-1}}(\pi_{j+1}f - \pi_j f) &\le& \min\{\Delta_{j}f, 2m_j\},\nonumber\\
        \min\{\varphi_{m_{j+1}}^{\vee}(\Delta_{j+1}f), 2m_j\}&\le& \min\{\Delta_{j}f, 2m_j\},\nonumber\\
         \min\{\varphi_{m_{j} - m_{j+1}}^{\vee}(\Delta_{j}f), 2m_j\}&\le& \min\{\Delta_{j}f, 2m_j\}.\label{lemma_martingale_chaining_eq10}
    \end{eqnarray}
    We therefore define the event
	\begin{eqnarray*}
		\Omega_n &:=& \{\sup_{f\in \sF(M_n)}R_n(\varphi^{\wedge}_{2M_n}(\pi_0 f))  \le 2\sigma \psi(\sigma)\}\\
		&&\quad\quad \cap \bigcap_{j=1}^{\tau}\big\{ \sup_{f\in \sF(M_n)}R_n(\min\{\Delta_{j}f,2m_j\}) \le 2 \delta_j\psi(\delta_j)\big\}.
	\end{eqnarray*}
	
	From \reff{truncation_decomposition_final_martingale} and \reff{lemma_martingale_chaining_eq10}, we obtain
	\begin{eqnarray}
		&&\sup_{f\in \sF(M_n)}|\G_n^{(1)}(f)|\Ii_{\Omega_n}\nonumber\\
		&\le& \sup_{f\in \sF(M_n)}|\G_n^{(1)}(\pi_0f)|\Ii_{\{\sup_{f\in \sF(M_n)}R_n(\pi_0 f) \le 2\sigma\psi(\sigma)\}}\nonumber\\
    &&\quad\quad + \Big\{\sup_{f\in\sF}|\G_n^{(1)}(\varphi_{m_\tau}^{\wedge}(\Delta_\tau f))|\nonumber\\
    &&\quad\quad\quad\quad\quad\quad\quad\quad \times\Ii_{\{\sup_{f\in \sF(M_n)}R_n(\min\{\Delta_{\tau} f, 2m_{\tau}\}) \le 2\delta_{\tau}\psi(\delta_{\tau})\}} + 2 R_2\Big\}\nonumber\\
    &&\quad\quad + \sum_{j=0}^{\tau-1}\sup_{f\in\sF}\Big|\G_n^{(1)}(\varphi_{m_j-m_{j+1}}^{\wedge}(\pi_{j+1}f - \pi_j f))\Big|\nonumber\\
    &&\quad\quad\quad\quad\quad\quad\quad\quad \times \Ii_{\{\sup_{f\in \sF(M_n)}R_n(\min\{\Delta_j f,2m_j\}) \le 2\delta_j\psi(\delta_j)\}}\nonumber\\
    &&\quad\quad + \sum_{j=0}^{\tau-1}\sup_{f\in\sF}\Big|\G_n^{(1)}(\min\big\{\big|\varphi_{m_{j+1}}^{\vee}(\Delta_{j+1}f)\big|,2m_j\big\})\Big|\nonumber\\
    &&\quad\quad\quad\quad\quad\quad\quad\quad \times\Ii_{\{\sup_{f\in \sF(M_n)}R_n(\min\{\Delta_j f, 2m_j\}) \le 2\delta_j\psi(\delta_j)\}} + 2 R_4\nonumber\\
    &&\quad\quad + \sum_{j=0}^{\tau-1}\sup_{f\in\sF}\Big|\G_n^{(1)}(\min\big\{\big|\varphi_{m_j-m_{j+1}}^{\vee}(\Delta_{j}f)\big|,2m_j\big\})\Big|\nonumber\\
    &&\quad\quad\quad\quad\quad\quad\quad\quad \times\Ii_{\{\sup_{f\in \sF(M_n)}R_n(\min\{\Delta_j f, 2m_j\}) \le 2\delta_j\psi(\delta_j)\}} + 2 R_5\nonumber\\
    &=:& \tilde R_1 + \{\tilde R_2 + 2 R_2\} + \tilde R_3 + \{\tilde R_4 + 2 R_4\} + \{\tilde R_5 + 2R_5\}.\label{martingale_main_chaining_decomposition}
	\end{eqnarray}
		
	We now discuss the terms $\tilde R_i$, $i=1,...,5$ separately. The terms $R_i$, $i \in \{2,4,5\}$ can be discussed similarly to the proof found in \cite[Theorem 4.4]{empproc}. Put
	\[
	    \tilde C_n := 2c(1+\frac{\ID_n^{\infty}}{\ID_n}),
	\]
	where $c$ is a resulting constant from the bound in \cite[Theorem 4.1 or Lemma 7.2]{empproc}.
	
	\begin{itemize}
        \item Since $|\{\pi_0 f: f\in \sF(M_n)\}| \le \N(\delta_0)$, $\|\pi_0 f\|_{\infty} \le M_n \le m(n,\delta_0,\N(\delta_1))$, we have by Lemma \ref{lemma_chaining2}:
    \[
        \IE \tilde R_1 = \E \sup_{f\in \sF(M_n)}|\G_n^{(1)}(\pi_0f)|\Ii_{\{\sup_{f\in \sF(M_n)}R_n(\pi_0 f) \le 2\delta_0\psi(\delta_0)\}} \le \tilde C_n\psi(\delta_0)\delta_0 \sqrt{1 \vee \log \N(\delta_1)}.
    \]
    \item It holds that $|\{\varphi^{\wedge}_{m_{\tau}}(\Delta_{\tau} f): f\in \sF(M_n)\}| \le N_{\tau}$. If $g := \varphi^{\wedge}_{m_{\tau}}(\Delta_{\tau} f)$, then $\|g\|_{\infty} \le m_{\tau} \le  m(n,\delta_{\tau},N_{\tau+1})$. We conclude by Lemma \ref{lemma_chaining2}:
    \begin{eqnarray*}
        \IE \tilde R_2&\le&\IE \sup_{f\in\sF}|\G_n^{(1)}(\varphi_{m_\tau}^{\wedge}(\Delta_\tau f))|\\
    &&\quad\quad\quad\quad\quad\quad\quad\quad \times\Ii_{\{\sup_{f\in \sF(M_n)}R_n(\min\{\Delta_{\tau} f, 2m_{\tau}\}) \le 2\delta_{\tau}\psi(\delta_{\tau})\}}\\
    &\le& \tilde C_n \psi(\delta_{\tau})\delta_{\tau}\cdot \sqrt{1 \vee \log N_{\tau+1}}.
    \end{eqnarray*}
    \item Since the partitions are nested, it holds that $|\{\varphi^{\wedge}_{m_j-m_{j+1}}(\pi_{j+1}f - \pi_j f): f\in \sF(M_n)\}| \le N_{j+1}$. If $g := \varphi^{\wedge}_{m_j-m_{j+1}}(\pi_{j+1}f - \pi_j f)$, we have  $\|g\|_{\infty} \le m_j - m_{j+1} \le m_j \le m(n,\delta_j,N_{j+1})$. We conclude by Lemma \ref{lemma_chaining2}:
    \begin{eqnarray*}
        \IE \tilde R_3 &\le& \sum_{j=0}^{\tau-1}\IE \sup_{f\in\sF}\Big|\G_n^{(1)}(\varphi_{m_j-m_{j+1}}^{\wedge}(\pi_{j+1}f - \pi_j f))\Big|\\
    &&\quad\quad\quad\quad\quad\quad\quad\quad \times \Ii_{\{\sup_{f\in \sF(M_n)}R_n(\min\{\Delta_j f,2m_j\}) \le 2\delta_j\psi(\delta_j)\}}\\
    &\le& \tilde C_n \sum_{j=0}^{\tau-1}\psi(\delta_j)\delta_j \sqrt{1 \vee \log N_{j+1}}.
    \end{eqnarray*}
    \item It holds that $|\{\min\{\varphi^{\vee}_{m_{j+1}}(\Delta_{j+1}f),2m_j\}: f\in \sF(M_n)\}| \le N_{j+1}$. If $g := \min\{\varphi^{\vee}_{m_{j+1}}(\Delta_{j+1}f),2m_j\}$, we have  $\|g\|_{\infty} \le 2m_j = m(n,\delta_j,N_{j+1})$. We conclude by Lemma \ref{lemma_chaining2}:
    \begin{eqnarray*}
        \IE \tilde R_4 &\le& \sum_{j=0}^{\tau-1}\IE \sup_{f\in\sF}\Big|\G_n^{(1)}(\min\big\{\big|\varphi_{m_{j+1}}^{\vee}(\Delta_{j+1}f)\big|,2m_j\big\})\Big|\\
    &&\quad\quad\quad\quad\quad\quad\quad\quad \times\Ii_{\{\sup_{f\in \sF(M_n)}R_n(\min\{\Delta_j f, 2m_j\}) \le 2\delta_j\psi(\delta_j)\}}\\
        &\le& \tilde C_n \sum_{j=0}^{\tau-1}\psi(\delta_j)\delta_j \sqrt{1 \vee \log N_{j+1}}.
    \end{eqnarray*}
    \item It holds that $|\{\min\{\varphi^{\vee}_{m_j - m_{j+1}}(\Delta_{j}f),2m_j\}: f\in \sF(M_n)\}| \le N_{j+1}$. If $g := \min\{\varphi^{\vee}_{m_j - m_{j+1}}(\Delta_{j}f),2m_j\}$, we have  $\|g\|_{\infty} \le 2m_j = m(n,\delta_j,N_{j+1})$. We conclude by Lemma \ref{lemma_chaining2} that:
    \begin{eqnarray*}
        \IE \tilde R_5 &\le& \sum_{j=0}^{\tau-1}\IE \sup_{f\in\sF}\Big|\G_n^{(1)}(\min\big\{\big|\varphi_{m_j-m_{j+1}}^{\vee}(\Delta_{j}f)\big|,2m_j\big\})\Big|\\
    &&\quad\quad\quad\quad\quad\quad\quad\quad \times\Ii_{\{\sup_{f\in \sF(M_n)}R_n(\min\{\Delta_j f, 2m_j\}) \le 2\delta_j\psi(\delta_j)\}}\\
        &\le& \tilde C_n \sum_{j=0}^{\tau-1}\psi(\delta_j)\delta_j \cdot \sqrt{1 \vee \log N_{j+1}}.
    \end{eqnarray*}
    \end{itemize}

 Inserting the bounds for $\IE \tilde R_i$, $i = 1,...,5$ and the bounds for $R_i$, $i \in \{2,4,5\}$ from the proof of \cite[Theorem 4.4]{empproc} into \reff{martingale_main_chaining_decomposition}, we obtain that with some universal constant $\tilde c > 0$, 
    \begin{equation}
        \E \sup_{f\in \sF(M_n)}\Big|\G_n^{(1)}(f)\Big|\Ii_{\Omega_n} \le \tilde c (1 + \frac{\ID_n^{\infty}}{\ID_n} + \frac{\ID_n}{\ID_n^{\infty}})\Big[\sum_{j=0}^{\tau+1}\psi(\delta_j)\delta_j \sqrt{1 \vee \log N_{j+1}} + I(\sigma)\Big].\label{proof_chaining_martingales_eq7}
    \end{equation}

Note that
\[
    \sum_{j=k}^{\infty}\delta_j \psi(\delta_j) \le 2\sum_{j=k}^{\infty}\int_{\delta_{j+1}}^{\delta_j}\psi(\delta_j) dx \le 2\int_0^{\delta_k}\psi(x) dx.
\]
By partial integration, it is easy to see that there exists some universal constant $c_{\psi} >0$ such that
\begin{equation}
    \big|\int_0^{\delta_k} \psi(x) dx\big| \le c_{\psi} \delta_k\psi(\delta_k),\label{psi_partialintegration}
\end{equation}
thus
\begin{equation}
    \sum_{j=k}^{\infty}\delta_j \psi(\delta_j) \le 2c_{\psi} \delta_k\psi(\delta_k).\label{proof_chaining_martingales_eq8}
\end{equation}
Using \reff{proof_chaining_martingales_eq8}, we can argue as in the proof \cite[Theorem 4.4]{empproc} (see (7.44), (7.45) and (7.46) therein) that there exists some universal constant $\tilde c_2 > 0$ such that
\[
    \sum_{j=0}^{\infty}\psi(\delta_j) \delta_j \sqrt{1 \vee \log N_{j+1}} \le \tilde c_2 I(\sigma).
\]
Insertion of the results into \reff{proof_chaining_martingales_eq7} yields
\begin{equation}
	\E \sup_{f\in \sF(M_n)}\big|\G_n^{(1)}(f)\big|\Ii_{\Omega_n} \le \tilde c \cdot (3\tilde c_2+1) (1 + \frac{\ID_n^{\infty}}{\ID_n} + \frac{\ID_n}{\ID_n^{\infty}})  I(\sigma).\label{thm_martingale_equicont_part2}
\end{equation}

\underline{Discussion of the event $\Omega_n$:} We have
\begin{eqnarray}
	\IP(\Omega_n^c) &\le& \IP\Big(\sup_{f\in \sF(M_n)}R_n(\varphi_{2M_n}^{\wedge}(\pi_0 f)) > 2\psi(\sigma)\sigma\Big)\nonumber\\
	&&\quad\quad + \sum_{j=1}^{\tau+1}\IP\Big(\sup_{f\in \sF(M_n)}R_n(\min\{\Delta_{j}f,2m_j\}) > 2 \psi(\delta_j)\delta_j\Big)\nonumber\\
	&=:& R_1^{\circ} + R_2^{\circ}.\label{thm_martingale_equicont_eq0}
\end{eqnarray}

We now discuss $R_i^{\circ}$, $i = 1,2$. Put
\[
    C_n^{\circ} := 2c\Big\{1+q^{*}\big(C_{\Delta}^{-1}C_{\beta}^{-2}\big)\big(\frac{\ID_n^{\infty}}{\ID_n}\big)^2\Big\},
\]
where $c$ is from Lemma \ref{lemma_chaining2}.

\begin{itemize}
    \item Since $|\{\varphi_{2M_n}^{\wedge}(\pi_0 f): f\in \sF(M_n)\}| \le \N(\delta_0) = \N(\sigma)$, $\|\varphi_{2M_n}^{\wedge}(\pi_0 f)\|_{\infty} \le 2M_n \le m(n,\sigma,\N(\sigma))$ and $V(\varphi_{2M_n}^{\wedge}(\pi_0 f)) \le V(\pi_0 f) \le \sigma$, we have by Lemma \ref{lemma_chaining2}:
    \[
        R_1^{\circ} \le \frac{C_n^{\circ}}{\psi(\sigma)^2}.
    \]
    \item It holds that $|\{\min\{\Delta_{j}f,2m_j\}: f\in \sF(M_n)\}| \le N_{j+1}$. We have  $\|\min\{\Delta_{j}f,2m_j\}\|_{\infty} \le 2m_j = m(n,\delta_j,N_{j+1})$ and $V(\min\{\Delta_{j}f,2m_j\}) \le V(\Delta_j f) \le \delta_j$. We conclude by Lemma \ref{lemma_chaining2} that:
    \[
        R_3^{\circ} \le C_n^{\circ}\sum_{j=0}^{\tau+1} \frac{1}{\psi(\delta_j)^2}.
    \]
\end{itemize}

Inserting the bounds for $R_i^{\circ}$, $i = 1,2$, into \reff{thm_martingale_equicont_eq0} yields
\begin{equation}
     \IP(\Omega_n^{c}) \le 2 C_n^{\circ}\sum_{j=0}^{\infty}\frac{1}{\psi(\delta_j)^2}.\label{thm_martingale_equicont_part3}
\end{equation}
We now have
    \[
    \sum_{j=0}^{\infty}\frac{1}{\psi(\delta_j)^2} \le 2\int_0^{\sigma}\frac{1}{\varepsilon\psi(\varepsilon)^2} d\varepsilon = \frac{2}{\log(\log(\sigma))}.
    \]
We conclude that for each $\eta > 0$, 
\begin{eqnarray*}
    \IP\Big(\sup_{f\in \sF}|\G_n^{(1)}(f)| > \eta\Big) &\le&\IP\Big(\sup_{f\in \sF}|\G_n^{(1)}(f)| > \eta, \Omega_n\Big) + \IP(\Omega_n^c)\\
    &\le& \frac{1}{\eta}\E \sup_{f\in \sF}|\G_n^{(1)}(f)|\Ii_{\Omega_n} + \IP(\Omega_n^c).
\end{eqnarray*}
    Insertion of \reff{thm_martingale_equicont_part1}, \reff{thm_martingale_equicont_part2} and \reff{thm_martingale_equicont_part3} gives the result.
\end{proof}

\begin{proof}[Proof of Corollary \ref{cor_martingale_equicont}] We will follow the proof of \cite[Corollary 4.5]{empproc}. Define $\tilde \sF _:= \{f-g : f,g \in \sF\}$. We obtain
    \begin{eqnarray}
        &&\IP\Big( \sup_{V(f-g) \leq \sigma, \, f,g \in \sF} |\G_n(f) - \G_n(g)| \geq \eta \Big)\nonumber\\
        &\le& \IP\Big( \sup_{V(\tilde f) \leq \sigma, \, \tilde f\in \tilde \sF} |\G_n^{(1)}(\tilde f)| \geq \frac{\eta}{2} \Big) + \IP\Big( \sup_{V(\tilde f) \leq \sigma, \, \tilde f \in \tilde \sF} |\G_n^{(2)}(\tilde f)| \geq \frac{\eta}{2} \Big).\label{cor_martingale_equicont_decomp}
    \end{eqnarray}
    
    Now let $F(z,u) := 2D_{n}^{\infty}(u)\cdot \bar F(z,u)$, where $\bar F$ is from Assumption \ref{ass_clt_expansion_ass2}. Then obviously, $F$ is an envelope function of $\tilde \sF$.
    
    We now discuss the second summand on the right hand side in  \reff{cor_martingale_equicont_decomp}. By Markov's inequality and \cite[Theorem 4.4]{empproc} applied to $W_i(f) = \IE[f(Z_i,\frac{i}{n})|Z_{i-1}]$, we obtain as in the proof of \cite[Corollary 4.5]{empproc} that
    \begin{eqnarray}
        &&\IP\Big( \sup_{V(\tilde f) \leq \sigma, \, \tilde f \in \tilde \sF} |\G_n^{(2)}(\tilde f)| \geq \frac{\eta}{2} \Big)\nonumber\\
        &\le& \frac{\tilde c }{(\eta/2)}\Big[2\sqrt{2}(1+\frac{\ID_n^{\infty}}{\ID_n} + \frac{\ID_n}{\ID_n^{\infty}})\int_0^{\sigma/2}\sqrt{1 \vee \IH(u,\sF,V)} d u\nonumber\\
        &&\quad\quad\quad\quad + \frac{4\sqrt{1\vee \IH(\frac{\sigma}{2})}}{r(\frac{\sigma}{\ID_n})}\big\|F^2 \Ii_{\{F > \frac{1}{4}n^{1/2}\frac{r(\sigma)}{\sqrt{1\vee \IH(\frac{\sigma}{2})}}\}}\big\|_{1,n}\Big].\label{cor_martingale_equicont_decomp2}
    \end{eqnarray}
    The first summand in \reff{cor_martingale_equicont_decomp2} converges to $0$ for $\sigma \to 0$ (uniformly in $n$) since
    \[
        \sup_{n\in\N}\int_0^{\sigma/2}\sqrt{1 \vee \IH(u,\sF,V)} du \le \sup_{n\in\N}\int_0^{\sigma} \psi(\varepsilon)\sqrt{1 \vee \IH(\varepsilon,\sF,V)} d\varepsilon < \infty.
    \]
    We now discuss the second summand in \reff{cor_martingale_equicont_decomp2}. The continuity conditions from Assumption \ref{ass_clt_expansion_ass2} on $\bar F$ yield as in the proof of Lemma \ref{lemma_theorem_clt_mult}(ii) that for all $u,u_1,u_2,v_1,v_2 \in [0,1]$,
    \begin{eqnarray}
        \|\bar F(Z_i,u) - \bar F(\tilde Z_i(\frac{i}{n}),u)\|_2 \le C_{cont}\cdot n^{-\alpha s/2},\label{cor_martingale_equicont_eq4}\\
        \|\bar F(Z_i(v_1),u_1) - \bar F(\tilde Z_i(v_2),v_2)\|_2 \le C_{cont}\cdot\big(|v_1 - v_2|^{\alpha s/2} + |u_1 - u_2|^{\alpha s}\big).\label{cor_martingale_equicont_eq5}
    \end{eqnarray}
    In the same manner of \cite[Corollary 4.5]{empproc}, we now obtain with \reff{cor_martingale_equicont_eq4} and \reff{cor_martingale_equicont_eq5} that
    \begin{equation}
        \big\|F^2 \Ii_{\{F > \frac{1}{4}n^{1/2}\frac{r(\sigma)}{\sqrt{1 \vee \IH(\frac{\sigma}{2})}}\}}\big\|_{1,n} \to 0\label{cor_martingale_equicont_eq7}
    \end{equation}
    for $n\to\infty$ (this is obvious if $Z_i$ is stationary, i.e. the first part of Assumption \ref{ass_clt_expansion_ass2} is fulfilled), which shows that \reff{cor_martingale_equicont_decomp2} converges to $0$ for $\sigma \to 0$, $n\to\infty$.
    
    We now consider the first term in \reff{cor_martingale_equicont_decomp}. By  Theorem \ref{thm_martingale_equicont}, we have with some universal constant $c > 0$ that 
    \begin{eqnarray}
        &&\IP\Big( \sup_{V(\tilde f) \leq \sigma, \, \tilde f\in \tilde \sF} |\G_n^{(1)}(\tilde f)| \geq \frac{\eta}{2} \Big)\nonumber\\
        &\le& \frac{2}{\eta}\Big[c\Big(1 + \frac{\ID_n^{\infty}}{\ID_n} + \frac{\ID_n}{\ID_n^{\infty}}\Big)\cdot \int_0^{\sigma} \psi(\varepsilon) \sqrt{1 \vee \IH\big(\eps,\tilde\sF,V\big)} \, \mathrm{d}\eps\nonumber\\
        &&\quad\quad + \frac{4\sqrt{1 \vee \IH(\frac{\sigma}{2})}}{r(\frac{\sigma}{\ID_n})}\big\|F^2\Ii_{\{F > \frac{1}{4}m(n,\sigma,\N(\frac{\sigma}{2}))\}}\big\|_1\Big]\nonumber\\
         &&\quad\quad +c\Big(1 + q^{*}\big(C_{\Delta}^{-1}C_{\beta}^{-2}\big)\Big(\frac{\ID_n^{\infty}}{\ID_n}\Big)^2\Big)\int_0^{\sigma}\frac{1}{\varepsilon \psi(\varepsilon)^2}d\varepsilon.\label{cor_martingale_equicont_decomp3}
    \end{eqnarray}
    For the first summand in \reff{cor_martingale_equicont_decomp3},
    \begin{eqnarray*}
        &&\int_0^{\sigma}\psi(\varepsilon)\sqrt{1 \vee \IH(\varepsilon,\tilde \sF, V)}d\varepsilon\\
        &\le& 2\sqrt{2}\int_0^{\sigma/2}\psi(2\varepsilon)\sqrt{1 \vee \IH(\varepsilon,\sF, V)}d\varepsilon \le 2\sqrt{2}\int_0^{\sigma/2}\psi(\varepsilon)\sqrt{1 \vee \IH(\varepsilon,\sF, V)}d\varepsilon.
    \end{eqnarray*}
    Note that it is easily seen that $\N(\varepsilon,\tilde \sF, V) \le \N(\frac{\varepsilon}{2},\sF, V)^2$ (cf. \cite{Vaart98}, Theorem 19.5), thus
    \begin{equation}
        \IH(\varepsilon,\tilde \sF, V) \le 2 \IH(\frac{\varepsilon}{2},\sF, V)\label{cor_equicont_eq1}.
    \end{equation}
    Together with \reff{cor_martingale_equicont_eq0} and the  uniform boundedness of $\ID_n, \ID_n^{\infty}$, we obtain that the first summand in \reff{cor_martingale_equicont_decomp3} converges to $0$ for $\sigma \to 0$ (uniformly in $n$).
    
    The third summand in \reff{cor_martingale_equicont_decomp3} converges to $0$ for $\sigma \to 0$ (uniformly in $n$) since $\int_0^{\infty}\varepsilon \psi(\varepsilon)^2 d\varepsilon < \infty$ and by the uniform boundedness of $\ID_n, \ID_n^{\infty}$.
    
    The second summand in \reff{cor_martingale_equicont_decomp3} converges to $0$ for $n\to\infty$ by \reff{cor_martingale_equicont_eq7}.
\end{proof}

\subsection{Proofs of Section \ref{sec_clt}}
\label{sec_clt_supp}

\begin{lem}\label{lemma_theorem_clt_mult}
    Let $\sF$ satisfy Assumptions \ref{ass_clt_process}, \ref{ass_clt_fcont}. Suppose that Assumptions \ref{ass2}, \ref{ass_clt_expansion_ass2} hold. Then there exist constants $C_{cont} > 0, C_{\bar f} > 0$ such that for any $f\in \sF$,
    \begin{enumerate}
        \item[(i)] for any $j\ge 1$,
        \begin{eqnarray*}
            \|P_{i-j}f(Z_i,u)\|_2 &\le& D_{f,n}(u) \Delta(j),\\
            \sup_{i=1,...,n}\|f(Z_i,u)\|_2 &\le& C_{\Delta}\cdot D_{f,n}(u),\\
            \sup_{i,u}\|\bar f(Z_i,u)\|_2 \le C_{\bar f}, && \sup_{v,u}\|\bar f(\tilde Z_0(v),u)\|_2 \le C_{\bar f}.
        \end{eqnarray*}
        \item[(ii)] with $x = \frac{1}{2}$,
    \begin{eqnarray}
        \|\bar f(Z_i,u) - \bar f(\tilde Z_i(\frac{i}{n}),u)\|_2 &\le& C_{cont}\cdot n^{-\varsigma s x},\label{lemma_theorem_clt_mult_1}\\
        \|\bar f(\tilde Z_i(v_1),u_1) - \bar f(\tilde Z_i(v_2),u_2)\|_2 &\le& C_{cont}\cdot \big(|v_1 - v_2|^{\varsigma s x} + |u_1 - u_2|^{\varsigma s}\big).\label{lemma_theorem_clt_mult_2}
    \end{eqnarray}
    \end{enumerate}
\end{lem}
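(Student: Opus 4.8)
The plan is to dispose of part (i) quickly by invoking Lemma \ref{depend_trans_2}, and to concentrate the effort on part (ii), where the nonsmoothness of $\bar f$ is the real difficulty. For part (i), write $f(z,u)=D_{f,n}(u)\bar f(z,u)$ as in \reff{f_form_decomposition}, and let $P_\ell=\IE[\,\cdot\mid\sA_\ell]-\IE[\,\cdot\mid\sA_{\ell-1}]$ be the usual martingale-difference projection. For $j\ge1$ the projection $P_{i-j}$ factors through $\sA_{i-1}$, so
\[
\|P_{i-j}f(Z_i,u)\|_2=\|P_{i-j}\IE[f(Z_i,u)\mid\sA_{i-1}]\|_2\le\delta_{2}^{\IE[f(Z_i,u)\mid Z_{i-1}]}(j)\le|D_{f,n}(u)|\,\Delta(j),
\]
the last step being \reff{depend_trans_2_eq22_part1} (which uses that $\bar\sF^{(1)}$ is an $(L,s,R,C)$-class together with the moment bounds of Assumption \ref{ass2}). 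The estimate $\sup_i\|f(Z_i,u)\|_2\le|D_{f,n}(u)|C_\Delta$ is exactly \reff{depend_trans_2_eq22_part3} with $\nu=2$. Finally, since $|\bar f|\le\bar F$, Assumption \ref{ass_clt_expansion_ass2} gives $\|\bar f(Z_i,u)\|_2\le\|\bar F(Z_i,u)\|_{2\bar p}$ and $\|\bar f(\tilde Z_0(v),u)\|_2\le\|\bar F(\tilde Z_0(v),u)\|_{2\bar p}$, both bounded uniformly in $i,v,u$, so one takes $C_{\bar f}$ to be the larger of the two suprema.

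For part (ii), set $A:=Z_i-\tilde Z_i(\tfrac{i}{n})$ with components $A_\ell=X_{i-\ell}-\tilde X_{i-\ell}(\tfrac{i}{n})$, and introduce the $L^2$-modulus $\Phi(z,u,c):=\sup_{|a|_{L_{\sF},s}\le c}|\bar f(z+a,u)-\bar f(z,u)|$, which is nondecreasing in $c$; by stationarity of $i\mapsto\tilde Z_i(v)$ together with \reff{ass_clt_expansion_ass2_eq1}, $\sup_{u,v}\IE[\Phi(\tilde Z_i(v),u,c)^2]\le C_1 c^s$. Splitting on whether $|A|_{L_{\sF},s}\le c$ for a deterministic threshold $c>0$, using monotonicity of $\Phi$ on the good event and bounding the difference crudely by the sum of the envelopes on the bad event,
\[
\IE\big|\bar f(Z_i,u)-\bar f(\tilde Z_i(\tfrac{i}{n}),u)\big|^2\le C_1 c^s+\IE\big[\big(\bar F(Z_i,u)+\bar F(\tilde Z_i(\tfrac{i}{n}),u)\big)^2\Ii_{\{|A|_{L_{\sF},s}>c\}}\big].
\]
The second term is controlled by Hölder's inequality with the exponent $\bar p$ from Assumption \ref{ass_clt_expansion_ass2} (the envelope factor bounded by $(2C_{\bar f})^2$) together with Markov's inequality applied to $|A|_{L_{\sF},s}^s=\sum_\ell L_{\sF,\ell}|A_\ell|_\infty^s$, whose moments are controlled by the two bounds of Assumption \ref{ass_clt_process} — $\|X_{i-\ell}-\tilde X_{i-\ell}(\tfrac{i-\ell}{n})\|_{\frac{2sp}{p-1}}\le C_X n^{-\varsigma}$ and $\|\tilde X_{i-\ell}(\tfrac{i-\ell}{n})-\tilde X_{i-\ell}(\tfrac{i}{n})\|_{\frac{2sp}{p-1}}\le C_X(\ell/n)^\varsigma$ — plus the suitably weighted summability of $L_{\sF}$, yielding $\IP(|A|_{L_{\sF},s}>c)\le C_2 c^{-s}n^{-\varsigma s}$. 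Optimizing the resulting bound $C_1 c^s+C'(c^{-s}n^{-\varsigma s})^{1-1/\bar p}$ over $c$ gives $\|\bar f(Z_i,u)-\bar f(\tilde Z_i(\tfrac{i}{n}),u)\|_2\le C_{cont}n^{-\varsigma s x}$, which is \reff{lemma_theorem_clt_mult_1}. I expect this truncation-and-optimization step to be the main obstacle: since $\bar f$ carries no pointwise regularity, only the averaged modulus \reff{ass_clt_expansion_ass2_eq1} is available, the coupling between the random perturbation $A$ and $\tilde Z_i(\tfrac{i}{n})$ has to be broken with care, and it is exactly the balancing of $c$ here that produces the exponent $x=\tfrac12$.

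For \reff{lemma_theorem_clt_mult_2} I would triangulate,
\[
\|\bar f(\tilde Z_i(v_1),u_1)-\bar f(\tilde Z_i(v_2),u_2)\|_2\le\|\bar f(\tilde Z_i(v_1),u_1)-\bar f(\tilde Z_i(v_2),u_1)\|_2+\|\bar f(\tilde Z_i(v_2),u_1)-\bar f(\tilde Z_i(v_2),u_2)\|_2,
\]
and bound the second summand directly via Assumption \ref{ass_clt_fcont}: it is at most $|u_1-u_2|^\varsigma\cdot2\sup_{u,v}\|\bar R(\tilde Z_0(v),u)\|_2\le C|u_1-u_2|^{\varsigma s}$ since $s\le1$ and $|u_1-u_2|\le1$. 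For the first summand one repeats the argument for \reff{lemma_theorem_clt_mult_1} verbatim with $A=\tilde Z_i(v_1)-\tilde Z_i(v_2)$ in place of $Z_i-\tilde Z_i(\tfrac{i}{n})$; here $\|\tilde X_{i-\ell}(v_1)-\tilde X_{i-\ell}(v_2)\|_{\frac{2sp}{p-1}}\le C_X|v_1-v_2|^\varsigma$ holds uniformly in $\ell$ (so no weighted summability is needed), and the same truncation and optimization produce the term $C_{cont}|v_1-v_2|^{\varsigma s x}$. Enlarging $C_{cont}$ to absorb all the constants completes the proof.
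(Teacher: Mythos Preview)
Your proposal is correct and follows essentially the same route as the paper's proof. For part (i) both you and the paper invoke Lemma \ref{depend_trans_2} (via the identity $P_{i-j}f(Z_i,u)=P_{i-j}\IE[f(Z_i,u)\mid\sA_{i-1}]$ for $j\ge1$), and for part (ii) both proofs split on the event $\{|Z_i-\tilde Z_i(v)|_{L_\sF,s}\le c\}$, use \reff{ass_clt_expansion_ass2_eq1} on the good event, apply H\"older with exponent $\bar p$ together with Markov's inequality on the bad event, optimize over $c$, and triangulate \reff{lemma_theorem_clt_mult_2} through Assumption \ref{ass_clt_fcont} for the $u$-increment.
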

\begin{proof}[Proof of Lemma \ref{lemma_theorem_clt_mult}]
    \begin{enumerate}
        \item[(i)]
     If Assumption \ref{ass2} is satisfied, we have by Lemma \ref{depend_trans_2} that
     \begin{eqnarray*}
        \|P_{i-j}f(Z_i,u)\|_2 &=& \|P_{i-j}\IE[f(Z_i,u)|\sA_{i-1}]\|_2\\
        &\le& \|\IE[f(Z_i,u)|\sA_{i-1}] - \IE[f(Z_i,u)|\sA_{i-1}]^{*(i-j)}\|_2 \le D_{f,n}(u)\Delta(j).
     \end{eqnarray*}
     The second assertion follows from Lemma \ref{depend_trans_2}.
     
     \item[(ii)] Let $\bar C_R := \sup_{v,u}\|\bar R(\tilde Z_0(v),u)\|_{2}$ and $C_R := \max\{\sup_{i,u}\|R(Z_i,u)\|_{2}, \sup_{u,v}\|R(\tilde Z_0(v),u)\|_{2}\}$. We first use Assumption \ref{ass_clt_fcont} and H\"older's inequality to obtain
     \begin{eqnarray}
         &&\|\bar f(\tilde Z_i(v),u_1) - \bar f(\tilde Z_i(v),u_2)\|_2\\
         &\le& |u_1-u_2|^{\varsigma}\cdot \big(\|\bar R(\tilde Z_i(v),u_1)\|_2 + \|R(\tilde Z_i(v),u_2)\|_2\big)\nonumber\\
         &\le& 2\bar C_R |u_1-u_2|^{\varsigma}.\label{lemma_theorem_clt_mult_contarg2}
    \end{eqnarray}
    Assume w.l.o.g. that
    \[
        \sup_{u,v}\frac{1}{c^s}\IE\Big[\sup_{|a|_{L_{\sF},s} \le c}\big|\bar f(\tilde Z_0(v),u) - \bar f(\tilde Z_0(v)+a,u)\big|^2\Big] \le C_R.
    \] (which is obvious if $Z_i$ is stationary, i.e. the first part of Assumption \ref{ass_clt_expansion_ass2} is fulfilled; in this case $Z_i = \tilde Z_i(v)$ for all $v$).
    Let $c_n > 0$ be some sequence. Let $C_{\bar f} := \max\{\sup_{i,u}\|f(Z_i,u)\|_{2\bar p}, \sup_{u,v}\|f(\tilde Z_0(v),u)\|_{2\bar p}\}$. Then we have by Jensen's inequality,
\begin{eqnarray*}
    && \big\|\bar f(Z_i,u) - \bar f(\tilde Z_i(v),u)\big\|_2\\
    &\le& \IE\Big[ \big|\bar f(Z_i,u) - \bar f(\tilde Z_i(v),u)\big|^2\Ii_{\{|Z_i - \tilde Z_i(v)|_{L_{\sF},s} \le c_{n}\}}\Big]^{1/2}\\
    &&\quad\quad + \IE\Big[(\bar f(Z_i,u) - \bar f(\tilde Z_i(v),u)^2\Ii_{\{|Z_i - \tilde Z_i(v)|_{L_{\sF},s} > c_{n}\}}\Big]^{1/2}\\
    &\le& \IE\Big[ \sup_{|a|_{L_{\sF},s} \le c_{n}}\big|\bar f(\tilde Z_i(v),u) - \bar f(\tilde Z_i(v) + a,u)\big|^2\Big]^{1/2}\\
    &&\quad\quad + \big\{\big\|\bar f(Z_i,u)\big\|_{2\bar p} + \bar f(\tilde Z_i(v),u)\big\|_{2\bar p} \big\}\IP(|Z_i - \tilde Z_i(v)|_{L_{\sF},s} > c_{n})^{\frac{\bar p-1}{2 \bar p}}\\
    &\le& C_R c_n^s + 2C_{\bar f} \Big(\frac{\| |Z_i - \tilde Z_i(v)|_{L_{\sF},s}\|_{\frac{2 \bar p s}{ \bar p-1}}}{c_n}\Big)^s\\
    &\le& C_R c_n^s + 2 C_{\bar f}C_X(|L_{\sF}|_1 + \sum_{j=0}^{\infty}L_{\sF,j}j^{\varsigma s}) \cdot \frac{\{|v-\frac{i}{n}|^{\varsigma s} + n^{-\varsigma s}\}}{c_n^s}.
\end{eqnarray*}
We obtain with $c_{cont} := C_R + 2 C_{\bar f}C_X(|L_{\sF}|_1 + \sum_{j=0}^{\infty}L_{\sF,j}j^{\varsigma s})$ that 
    \begin{equation}
        \|\bar f(Z_{i},u) - \bar f(\tilde Z_i(v),u)\|_2 \le c_{cont}\cdot \Big[c_n^s + \frac{|v-\frac{i}{n}|^{\varsigma s} + n^{-\varsigma s}}{c_n^s}\Big].\label{lemma_theorem_clt_mult_res2}
    \end{equation}
    
    Furthermore, as above, for any $c > 0$, 
    \begin{eqnarray}
        \|f(\tilde Z_i(v_1),u) - f(\tilde Z_i(v_2),u)\|_2 &\le& C_R c^s + 2C_{\bar f}\Big(\frac{\| |\tilde Z_0(v_1) - \tilde Z_0(v_2)|_{L_{\sF},s}^s\|_{\frac{2\bar p}{\bar p-1}}}{c}\Big)^s\nonumber\\
        &\le& C_R c^s + 2 C_{\bar f}C_X |L_{\sF}|_1 \cdot \frac{|v_1 - v_2|^{\varsigma s}}{c^s}.\label{lemma_theorem_clt_mult_res22}
    \end{eqnarray}
    From \reff{lemma_theorem_clt_mult_res2}, we obtain the first assertion with $v = \frac{i}{n}$. The second assertion follows from \reff{lemma_theorem_clt_mult_res22} and \reff{lemma_theorem_clt_mult_contarg2}.
    \end{enumerate}
\end{proof}

\subsection{Details of Section \ref{sec_further_appli}}
\label{sec_examples_supp}

    We first show that the supremum over $x\in\R$, $v\in[0,1]$ can be approximated by a supremum over grids $x\in \sX_n$, $v\in V_n$.
    
    For some $Q > 0$, put $c_n = Qn^{\frac{1}{2s}}$. Define the event $A_n = \{\sup_{i=1,...,n}|X_i| \le c_n\}$. Then by Markov's inequality,
    \begin{equation}
        \IP(A_n^c) \le n \cdot \frac{\|X_i\|_{2s}^{2s}}{Q^{2s}c_n^{2s}} \le \frac{C_X^{2s} n}{c_n^{2s}}\label{example_densityestimation_details_eq1}
    \end{equation}
    is arbitrarily small for $Q$ large enough.
    
    Put $\hat g_{n,h}^{\circ}(x,v) := \frac{1}{n}\sum_{i=1}^{n}K_{h_1}(i/n-v) K_{h_2}(X_i-x)\Ii_{\{|X_i| \le c_n\}}$. Then
    \begin{equation}
        \text{on }A_n, \quad\quad \hat g_{n,h}^{\circ}(\cdot) = \hat g_{n,h}(\cdot).\label{example_densityestimation_details_eq2}
    \end{equation}
    Furthermore,
    \begin{eqnarray}
        \sqrt{nh_1 h_2}\big|\IE \hat g_{n,h}(x,v) - \IE \hat g_{n,h}^{\circ}(x,v)\big| &\le& \frac{\sqrt{n h_1 h_2}|K|_{\infty}}{nh_1}\sum_{i=1}^{n}\IE[ K_{h_2}(X_i - x)\Ii_{\{|X_i| > c_n\}}]\nonumber\\
        &\le& \sqrt{nh_1 h_2}(h_1h_2)^{-1}|K|_{\infty}c_n^{-2s}\sup_{i}\IE[ K(\frac{X_i - x}{h_2})|X_i|^{2s}]\nonumber\\
        &\le& Q^{-2s}(nh_1 h_2)^{-1/2}| K|_{\infty}^2 C_X^{2s} = o(1).\label{example_densityestimation_details_eq3}
    \end{eqnarray}
    For $|x| > 2c_n$, we have $ K_{h_2}(X_i - x)\Ii_{\{|X_i| \le c_n\}} \le h^{-1}(\frac{c_n}{h})^{-p_K} = h^{p_K-1}c_n^{-p_K}$ and thus
    \begin{equation}
        \sqrt{nh}|\hat g_{n,h}^{\circ}(x,v) - \IE \hat g_{n,h}^{\circ}(x,v)| \le \frac{2|K|_{\infty}C_{ K}}{h_1^{1/2}}(nh_2)^{1/2} h_2^{p_K-1}c_n^{-p_K} \le \frac{h_2^{p_K}}{Q^{p_K}(nh_1 h_2)^{1/2}} = o(1).\label{example_densityestimation_details_eq4}
    \end{equation}
    By \reff{example_densityestimation_details_eq2}, \reff{example_densityestimation_details_eq3} and \reff{example_densityestimation_details_eq4}, we have on $A_n$,
    \begin{eqnarray}
        &&\sqrt{nh_1h_2}\sup_{x\in\R, v\in [0,1]}|\hat g_{n,h}(x,v) - \IE \hat g_{n,h}(x,v)|\nonumber\\
        &=& \sqrt{nh_1h_2}\sup_{x\in\R, v \in [0,1]}|\hat g_{n,h}^{\circ}(x,v) - \IE \hat g_{n,h}^{\circ}(x,v)| + o_p(1)\nonumber\\
        &=& \sqrt{nh_1h_2}\sup_{|x|\le 2c_n, v\in[0,1]}|\hat g_{n,h}^{\circ}(x,v) - \IE \hat g_{n,h}^{\circ}(x,v)| + o_p(1)\nonumber\\
        &=& \sqrt{nh_1h_2}\sup_{|x|\le 2c_n, v\in[0,1]}|\hat g_{n,h}(x,v) - \IE \hat g_{n,h}(x,v)| + o_p(1).\label{example_densityestimation_details_eq5}
    \end{eqnarray}
    
    Let $\sX_n = \{i n^{-3}: i \in \{-2 \lceil c_n\rceil n^3 ,..., 2 \lceil c_n\rceil n^3\}\}$ be a grid that approximates each $x \in [-2c_n,2 c_n]$ with precision $n^{-3}$, and $V_n = \{in^{-3}:i=1,...,n^{3}\}$. Since $K$ are Lipschitz continuous with constant $L_K$, 
    \begin{eqnarray}
        &&\sqrt{nh_1 h_2}\sup_{|x-x'| \le n^{-3},|v-v'| \le n^{-3}}\big|\big(\hat g_{n,h}(x,v) - \IE \hat g_{n,h}(x,v)\big)\nonumber\\
        &&\quad\quad\quad\quad\quad\quad\quad\quad\quad\quad\quad\quad- \big(\hat g_{n,h}(x',v) - \IE \hat g_{n,h}(x',v)\big)\big|\nonumber\\
        &\le& 2\frac{\sqrt{n}}{\sqrt{h_1 h_2}}\sup_{|x-x'| \le n^{-3},|v-v'| \le n^{-3}}\Big[\frac{L_{K}|K|_{\infty}|x-x'|}{h_2} + \frac{L_K | K|_{\infty}|v-v'|}{h_1}\Big]\nonumber\\
        &=& O(n^{-1}).\label{example_densityestimation_details_eq6}
    \end{eqnarray}
    We conclude from \reff{example_densityestimation_details_eq1},  \reff{example_densityestimation_details_eq5} and \reff{example_densityestimation_details_eq6} that
    \begin{eqnarray}
        &&\sqrt{nh_1h_2}\sup_{x\in\R, v\in [0,1]}|\hat g_{n,h}(x,v) - \IE \hat g_{n,h}(x,v)|\nonumber\\
        &=& \sqrt{nh_1h_2}\sup_{x\in \sX_n, v\in V_n}|\hat g_{n,h}(x,v) - \IE \hat g_{n,h}(x,v)| + O_p(1)\label{example_densityestimation_details_eq7}
    \end{eqnarray}

    It was already shown that Assumption \ref{ass2} is satisfied. Furthermore, we can choose $\ID_n = |K|_{\infty}$, $\ID_{\nu_2,n}^{\infty} = \frac{|K|_{\infty}}{\sqrt{h_1}}$ with $\nu_2 = \infty$, and $\bar F(z,u) = \sup_{f\in \sF}\bar f(z,u) \le \frac{| K|_{\infty}}{\sqrt{h_2}} =: C_{\bar F,n}$. Note that
    \begin{eqnarray*}
        \IE[(\sqrt{h_2} K_{h_2}(X_i-x))^2] &=& \IE\big[\IE[(\sqrt{h_2} K_{h_2}(X_i-x))^2|X_{i-1}]\big] \\ 
        &=& \int \Big( \int K(w)^\kappa f_{X_i \mid X_{i-1} = z} (x+ wh_2) dw \Big)^{1/\kappa} d \IP^{X_{i-1}}(z) \\
        &\le& C_{\infty}\cdot (\int K(w)^2 dw)^{1/2}.
    \end{eqnarray*}
    therefore
    \[
        \|f_{x,v}\|_{2,n} \le \ID_n C_{\infty}\int K(w)^2 dw,
    \]
    which implies $\sigma := \sup_{n\in\N}\sup_{f\in\sF}V_n(f) < \infty$. Due to $\Delta(k) = O(k^{-\alpha s})$, the last condition in \reff{lemma_hoeffding_dependent_rates_cond1} is fulfilled if
    \[
        \sup_{n\in\N}\frac{\log(n)}{n h_2 h_1^{\frac{\alpha s}{\alpha s-1}}} < \infty.
    \]
    By Corollary \ref{lemma_hoeffding_dependent_rates}, we have
    \[
        \sqrt{nh_1 h_2}\sup_{x\in \sX_n, v \in V_n}\big|\hat g_{n,h}(x) - \IE \hat g_{n,h}(x,v)\big| = \sup_{f\in\sF}|\G_n(f)| = O_p(\sqrt{\log|\sF|}) = O(\sqrt{\log(n)}).
    \]
    With \reff{example_densityestimation_details_eq7}, it follows that
    \[
        \sqrt{nh_1h_2}\sup_{x\in\R, v\in [0,1]}|\hat g_{n,h}(x,v) - \IE \hat g_{n,h}(x,v)| = O_p\big(\sqrt{\log(n)}\big).
    \]

\end{document}